\theoremstyle{plain}
\newcommand{\refnewtheoremn}[4]{%
\newaliascnt{#1}{#2}
\newtheorem{#1}[#1]{#3}
\aliascntresetthe{#1}
\expandafter\providecommand\csname #1autorefname\endcsname{#4}}
\newcommand{\refnewtheorem}[3]{\refnewtheoremn{#1}{#2}{#3}{#3}}
\def\makeCal#1{%
\expandafter\newcommand\csname c#1\endcsname{\mathcal{#1}}}
\def\makeBB#1{%
\expandafter\newcommand\csname b#1\endcsname{\mathbb{#1}}}
\def\makeFrak#1{%
\expandafter\newcommand\csname f#1\endcsname{\mathfrak{#1}}}
\edef\y{\@Alph\count@}%
\newtheorem{thm}{Theorem}[section]
\theoremstyle{definition}
\newcommand{\op}{\operatorname}
\newcommand{\dual}{\vee}
\newcommand{\fg}{\mathfrak{g}}
\newcommand{\Crit}{\op{Crit}}
\newcommand{\Rep}{\op{Rep}}
\newcommand{\pair}[2]{\left\langle #1,#2 \right\rangle}
\newcommand{\ol}{\overline}
\newcommand{\arxiv}[1]{\href{http://arxiv.org/abs/#1}{{\tt arXiv:#1}}}
\newcommand{\Perf}{\op{Perf}}
\newcommand{\Spec}{\op{Spec}}
\newcommand{\RHom}{\op{RHom}}
\newcommand{\Irr}{\op{Irrep}}
\newcommand{\df}[1]{{\bf \textsf{#1}}}
\newcommand{\git}{/\!/}
\newcommand{\bv}{{\bf v}}
\newcommand{\bw}{{\bf w}}
\newcommand{\dgCat}{\mathrm{dg}\text{-}\mathrm{Cat}}
\begin{document}

\title{Combinatorial constructions of derived equivalences}
\author{Daniel Halpern-Leistner}
\address{Department of Mathematics, Columbia University}
\email{\href{mailto:danhl@math.columbia.edu}{danhl@math.columbia.edu}}
\urladdr{\url{http://math.columbia.edu/~danhl/}}

\author{Steven V Sam}
\address{Department of Mathematics, University of Wisconsin, Madison}
\email{\href{mailto:svs@math.wisc.edu}{svs@math.wisc.edu}}
\urladdr{\url{http://math.wisc.edu/~svs/}}

\thanks{DHL was partially supported by NSF DMS-1303960. SS was partially supported by NSF DMS-1500069.}
\subjclass[2010]{
14F05, 
14L24, 
19E08.
}

\date{June 22, 2016}

\begin{abstract}
Given a certain kind of linear representation of a reductive group, referred to as a quasi-symmetric representation in recent work of {\v S}penko and Van den Bergh, we construct equivalences between the derived categories of coherent sheaves of its various geometric invariant theory (GIT) quotients for suitably generic stability parameters. These variations of GIT quotient are examples of more complicated wall crossings than the balanced wall crossings studied in recent work on derived categories and variation of GIT quotients.

Our construction is algorithmic and quite explicit, allowing us to: 1) describe a tilting vector bundle which generates the derived category of such a GIT quotient, 2) provide a combinatorial basis for the $K$-theory of the GIT quotient in terms of the representation theory of $G$, and 3) show that our derived equivalences satisfy certain relations, leading to a representation of the fundamental groupoid of a ``K\"ahler moduli space'' on the derived category of such a GIT quotient. Finally, we use graded categories of singularities to construct derived equivalences between all Deligne--Mumford hyperk\"ahler quotients of a symplectic linear representation of a reductive group (at the zero fiber of the algebraic moment map and subject to a certain genericity hypothesis on the representation), and we likewise construct actions of the fundamental groupoid of the corresponding K\"ahler moduli space.
\end{abstract}

\maketitle

\setcounter{tocdepth}{1}
\tableofcontents

\section{Introduction}

One of the motivating conjectures in the theory of derived categories of coherent sheaves states that two smooth algebraic varieties which are projective over an affine variety $Y,Y' \to \Spec(R)$ which are K-equivalent -- meaning that they are birational to one another and the pullback of their canonical bundles to a resolution of this birational map are isomorphic -- have an equivalence between their derived categories of coherent sheaves ${\rm D}^b(Y) \simeq {\rm D}^b(Y')$.\footnote{One expects that this equivalence restricts to the identity  away from the exceptional locus of the flop.} In the case where $R = k$ is the ground field (always assumed to have characteristic $0$), the conjecture is settled for 3-folds \cite{Bridgeland}, as well as for toric varieties \cite{Kawamata}.

This conjecture originates in the work of Bondal and Orlov \cite{BondalOrlov}, and is inspired by homological mirror symmetry. In fact, a careful reading of the mirror symmetry philosophy predicts an action of the fundamental groupoid of a certain ``complexified K\"ahler moduli space'' on the derived category of a Calabi-Yau manifold. This paper settles the D-equivalence conjecture and constructs such an action of the fundamental groupoid of the complexified K\"ahler moduli space for a large class of varieties and orbifolds arising as GIT quotients of certain linear representations of a reductive group.

As any birational transformation between smooth projective varieties over $\Spec(R)$ can be modeled explicitly as a variation of geometric invariant theory (GIT) quotients (see \cite{HuKeel} for instance), it is natural to approach the D-equivalence conjecture from this perspective, and in fact there is a general framework for doing so, established in \cite{HL,BFK} following \cite{HHP,Segal}, which we now recall.

Given a smooth projective-over-affine variety $X$ with a reductive group $G$ acting on $X$, any choice of $G$-ample line bundle $L$ defines a $G$-equivariant open semistable locus $X^{\rm{ss}}(L) \subset X$, and we refer to the quotient stack $X^{\rm{ss}}/G$ as the GIT quotient.\footnote{One is typically interested in the situation when $X^{\rm{ss}}/G$ is a scheme, but our methods apply just as well when $G$ acts on $X^{\rm{ss}}(L)$ with finite stabilizers and therefore $X^{\rm{ss}}(L)/G$ is a Deligne--Mumford stack. In the language of GIT this is the condition that $X^{\rm{ss}}(L) = X^{\rm{s}}(L)$.} Two $G$-ample bundles $L_\pm$ lead to birational stacks $X^{\rm{ss}}(L_\pm) / G$, and when this birational transformation is a K-equivalence, the method for establishing a derived equivalence ${\rm D}^b(X^{\rm{ss}}(L_+) /G) \simeq {\rm D}^b(X^{\rm{ss}}(L_-)/G)$ is to verify the following:

\begin{ansatz} \label{ans:G}
There is a full subcategory $\cG \subset {\rm D}^b(X/G)$ such that the restriction functor gives an equivalence $\op{res}_\pm \colon \cG \xrightarrow{\simeq} {\rm D}^b(X^{\rm{ss}}(L_\pm) / G)$.
\end{ansatz}

If one can find such a category $\cG$, then the equivalence is simply $\op{res}_- \circ \op{res}_+^{-1}$. For a smooth projective-over-affine $X$ with an action of a reductive group $G$, the main theorems of \cite{HL} and \cite{BFK} produce, for any GIT quotient, a category $\cG$ which is identified with ${\rm D}^b(X^{\rm{ss}}/G)$ under restriction -- the category $\cG$ is described as the full subcategory of complexes satisfying a local weight condition (see \S \ref{sect:magic_windows}). Unfortunately, it is not always the case that the category produced for $X^{\rm{ss}}(L_+)/G$ will coincide with the category for $X^{\rm{ss}}(L_-)/G$ as in \autoref{ans:G}. Prior to this paper, the only general statement to this effect applies to the simplest kind of variation of GIT, referred to as a ``balanced wall crossing'' in \cite{HL} and an ``elementary wall crossing'' in \cite{BFK}. This is enough to establish many new cases of the conjecture, but there are many more variations of GIT quotient in which derived equivalences are expected, but not yet established.

In this paper we focus on the ``local'' version of this conjecture, where $Y,Y' \to \Spec(R)$ are birational maps. Our investigation began with an attempt to understand a series of examples where derived equivalences have been established for more complicated variations of GIT quotient, and specifically the examples of \cite{DonovanSegal}, \cite{DonovanSegal2}.\footnote{Similar questions have been investigated by Paul Horja, who presented some results on the derived category of GIT quotients of linear representations of tori in a talk in 2012 at the IPMU in Kashiwa, Japan in which certain zonotopes played a key role.} Those papers also verify \autoref{ans:G}, but for a category defined explicitly by a set of generating vector bundles. We will show that this phenomenon happens in a much wider class of examples. We focus on the local model: we consider a linear representation $X$ of a reductive group $G$, and we assume that $X$ is \emph{quasi-symmetric} (see the definition in \S\ref{sect:zonotope}). In particular, our methods apply to all representations which are self-dual.

We have chosen to focus on the case of GIT quotients of a linear representation because it greatly simplifies the exposition and highlights the combinatorial and concrete aspects of the construction. It is also closer in spirit to the work in the physics literature on the gauged linear sigma model \cite{HHP,HHP2}, further explained in mathematical terms in \cite{Segal}, which introduced \autoref{ans:G} as a method for producing derived equivalences between different GIT quotients. It is important to note that although the balanced wall crossings studied in \cite{HL} establish many new instances of the D-equivalence conjecture, every balanced wall-crossing is \'etale locally equivalent to a wall crossing of the kind initially studied in \cite{Segal}. In contrast, the wall-crossings studied here constitute a fundamentally new set of examples, generalizing those of \cite{DonovanSegal,DonovanSegal2}. As with \cite{Segal}, the linear case studied here serves as an \'{e}tale local model for ``locally quasi-symmetric'' variations of GIT quotient more generally. In follow-up work, we will use our main theorem to establish a ``global'' statement which generalizes many if not all of the known instances in which  a derived equivalence arises from a variation of GIT quotient (see Section \ref{sect:non_local} for a brief discussion).

\bigskip

Our main theorem defines a certain Weyl-invariant polytope $\ol{\nabla} \subset M_\bR$, where $M$ is the character lattice of a maximal torus $T \subset G$. For any $\delta \in M_\bR^W$ we define $\cM(\delta + \ol{\nabla}) \subset {\rm D}^b(X/G)$ to be the full subcategory generated by vector bundles of the form $\cO_X \otimes U$, where $U$ is an irreducible representation of $G$ whose character lies in $\delta + \ol{\nabla}$, a ``magic window.'' Likewise, characters of the group $\ell \in \op{Pic}(BG)$ determine $G$-linearized invertible sheaves $\cO_X \otimes \ell$ with which to form a GIT quotient. It is known \cite{DolgachevHu} that $X^{\rm{ss}}(\cO_X \otimes \ell)$ only depends on the class of $\ell$ in $\op{Pic}(BG)\otimes \bR \cong M_\bR^W$ and in fact only on which ``cell'' $\ell$ lies in with respect to a rational wall and chamber decomposition of $M_\bR^W$, so it is standard to regard the semistable locus $X^{\rm{ss}}(\ell)$ as determined by an arbitrary $\ell \in M_\bR^W$. Our main theorem states:

\begin{thm} [\autoref{thm:magic_windows}]
If $X$ is a quasi-symmetric representation of $G$ satisfying a suitable genericity condition, then for any $\ell \in M_\bR^W$ such that $X^{\rm{ss}}(\ell) = X^{\rm{s}}(\ell)$ and any $\delta \in M_\bR^W$ such that $M \cap \partial(\delta + \ol{\nabla}) = \emptyset$, the restriction functor
\[
\cM(\delta + \ol{\nabla}) \to {\rm D}^b(X^{\rm{ss}}(\ell)/G)
\]
is an equivalence of dg-categories.
\end{thm}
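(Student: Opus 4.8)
The plan is to deduce the theorem from the window equivalences of \cite{HL,BFK}. Their content is that, for a fixed $\ell$ with $X^{\mathrm{ss}}(\ell)=X^{\mathrm{s}}(\ell)$, there is a full triangulated (in fact dg) subcategory $\cW_\ell\subset {\rm D}^b(X/G)$ --- cut out by a weight condition along the Kempf--Ness strata of the unstable locus $X^{\mathrm{us}}(\ell)$, and depending on a choice of ``window position'' --- on which the restriction $\op{res}_\ell\colon\cW_\ell\xrightarrow{\ \sim\ }{\rm D}^b(X^{\mathrm{ss}}(\ell)/G)$ is a dg-equivalence. Given this, the theorem splits into two assertions: (i) $\cM(\delta+\ol\nabla)\subseteq \cW_\ell$ for a suitable choice of window, which is closed under cones, shifts and summands, so that $\op{res}_\ell|_{\cM(\delta+\ol\nabla)}$ is automatically fully faithful; and (ii) the objects $\cO_{X^{\mathrm{ss}}}\otimes U$ with $U\in\Irr(G)$ of highest weight in $\delta+\ol\nabla$ generate ${\rm D}^b(X^{\mathrm{ss}}(\ell)/G)$, giving essential surjectivity. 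The ``magic'' --- that the \emph{same} category $\cM(\delta+\ol\nabla)$ works for \emph{every} $\ell$ --- is exactly the statement that $\ol\nabla$ has been chosen to fit inside the window for all chambers of $M_\bR^W$ simultaneously.

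For step (i), I would first describe the Kempf--Ness stratification combinatorially: since $X$ is a linear representation with $T$-weights $\{\beta_i\}\subset M$, each unstable stratum for $\ell$ is indexed by a rational one-parameter subgroup $\lambda$, its center $Z_\lambda$ lies in the fixed subspace $X^\lambda$, and the relevant scalar is $\eta_\lambda$, the $\lambda$-weight of $\det\!\big(N^\vee_{S_\lambda/X}\big|_{Z_\lambda}\big)$, which is computed from the $\beta_i$ and the roots of $\fg$ and, by quasi-symmetry, equals $\tfrac12\sum_i|\langle\lambda,\beta_i\rangle|$ plus a non-negative contribution from the roots. Because $\lambda$ acts trivially on $Z_\lambda$, the pullback of $\cO_X\otimes U$ to $Z_\lambda$ has $\lambda$-weights precisely the $\lambda$-weights of $U$, so membership in $\cW_\ell$ amounts to requiring that, for every destabilizing $\lambda$ occurring for any $\ell$, the $\lambda$-weights of every such $U$ lie in a half-open interval of length $\eta_\lambda$. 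Since $\delta$ is $W$-fixed and $\ol\nabla$ is $W$-invariant and convex, all weights of $U$ lie in $\delta+\ol\nabla$, so this reduces to a convex-geometric statement about $\ol\nabla$ relative to the finitely many walls of the GIT fan (each spanned by a subset of the $\beta_i$): quasi-symmetry is what makes each $\eta_\lambda$ large enough for $\ol\nabla$ to fit, and the genericity hypothesis $M\cap\partial(\delta+\ol\nabla)=\emptyset$ upgrades the resulting inequalities from weak to strict, so that a half-open window genuinely accommodates all weights of all relevant $U$.

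For step (ii), recall that the $\cO_X\otimes V$ with $V\in\Irr(G)$ generate ${\rm D}^b(X/G)$ (as $X$ is affine and $G$ reductive), hence their restrictions generate ${\rm D}^b(X^{\mathrm{ss}}(\ell)/G)$; so it suffices to show that, modulo the subcategory of complexes supported on $X^{\mathrm{us}}(\ell)$, each $\cO_X\otimes V$ lies in the thick subcategory generated by the window representations. I would do this by a Koszul-type resolution together with an induction: for $V$ whose highest weight lies just past a facet of $\delta+\ol\nabla$, produce a bounded complex built from $\cO_X\otimes(\text{window representations})$ whose cone is supported on the corresponding stratum, and then induct on how far the highest weight of $V$ is from $\delta+\ol\nabla$ and simultaneously on the length of the stratification, using the semiorthogonal decomposition of ${\rm D}^b(X/G)$ into $\cW_\ell$ and pieces built from the derived categories of the centers ${\rm D}^b(Z_\lambda^{\mathrm{ss}}/L_\lambda)$ (which are GIT quotients for strictly smaller groups, so the inductive hypothesis applies after a combinatorial check that the induced ``magic windows'' match up). A K-theoretic count --- that the classes of the window generators already span $K_0({\rm D}^b(X^{\mathrm{ss}}(\ell)/G))$, using the representation theory of $G$ --- provides a useful consistency check and, combined with full faithfulness, closes the loop. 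Since $\delta+\ol\nabla$ contains only finitely many lattice points, this simultaneously shows $\bigoplus_U \cO_{X^{\mathrm{ss}}}\otimes U$ is a tilting bundle.

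I expect the main obstacle to be step (ii): showing the finite collection of window generators actually generates ${\rm D}^b(X^{\mathrm{ss}}(\ell)/G)$, i.e.\ constructing the resolutions expressing an arbitrary $\cO_X\otimes V$ in terms of window representations up to strata, and doing so \emph{uniformly} in $\ell$ while controlling behavior along the Kempf--Ness strata so that the inductive semiorthogonal decomposition applies. It is here that both quasi-symmetry and the genericity of $\delta$ enter essentially. The convex geometry of step (i) is comparatively routine once the correct polytope $\ol\nabla$ is identified --- but identifying $\ol\nabla$ and verifying that it lies in every window is precisely where the definition of $\ol\nabla$ earns its keep.
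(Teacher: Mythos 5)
Your overall architecture --- full faithfulness by fitting $\cM(\delta+\ol{\nabla})$ into a window category of \cite{HL}, plus a separate generation argument on the semistable locus --- is the paper's, and your step (i) is essentially \autoref{lem:fully_faithful}: the width of $\ol{\nabla}$ in each codirection $\lambda$ is by construction exactly $\eta_\lambda$, and the hypothesis $M\cap\partial(\delta+\ol{\nabla})=\emptyset$ is precisely what lets the closed slab be replaced by a half-open one. (One small slip: $\eta_\lambda=\tfrac12\sum_i|\langle\lambda,\beta_i\rangle|$ \emph{minus} a non-negative contribution from the roots, since $\bL=[X^\dual]-[\fg^\dual]$.)

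The genuine gap is in step (ii), and it is exactly the point you flag as ``the main obstacle.'' Two things are missing. First, the complexes themselves: the paper takes $\cS(\lambda)=G\times_B X^{\lambda\ge0}$ inside $G/B\times X$, pushes forward its Koszul resolution, and uses Borel--Weil--Bott to produce minimal complexes $C_{\lambda,\chi}$ and $D^\vee_{\lambda,\chi}$ that are acyclic on $X^{\mathrm{ss}}(\ell)$, contain $\cO_X\otimes V(\chi)$ exactly once, and whose remaining terms are $\cO_X\otimes V((\chi\mp\beta_{i_1}\mp\cdots\mp\beta_{i_p})^+)$ (\autoref{prop:C_unstable}, \autoref{prop:D_unstable}); which of the two is used depends on the sign of $\langle\lambda,\ell\rangle$, and it is the genericity of $\ell$ for $\ol{\Sigma}$ (not the genericity of $\delta$) that guarantees $\langle\lambda,\ell\rangle\ne0$ for the facet-normal $\lambda$, via \autoref{prop:semistable}. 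Second, and more seriously, termination: you give no reason the recursion stops. The paper's double induction on $(r_\chi,p_\chi)$, and in particular the line-by-line redistribution of the coefficients $a_i'$ using quasi-symmetry (\autoref{lem:rp-chi}, \autoref{lem:alpha-ineq}) to show that every other term appearing in the complex is strictly closer to $-\rho+\delta+\tfrac12\ol{\Sigma}$, is the heart of the proof; without it the proposed ``induction on distance from $\delta+\ol{\nabla}$'' is not well founded. Finally, your scaffolding via the semiorthogonal decomposition of ${\rm D}^b(X/G)$ and induction on the categories ${\rm D}^b(Z_\lambda^{\mathrm{ss}}/L_\lambda)$ of the stratum centers is not what the paper does and would itself require proving that the magic windows for $X^\lambda/L_\lambda$ are compatible with those for $X/G$ --- a nontrivial claim that is not obviously easier than the direct combinatorial argument it would replace.
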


The ``genericity'' condition on $X$ is the requirement that if one considers the GIT quotient of $X$ by the action of the maximal torus $T\subset G$, then $X^{T\text{\rm{-ss}}}(\ell) = X^{T\text{\rm{-s}}}(\ell)$ for some $\ell \in M_\bR^W$. In \S\ref{sect:zonotope} we explicitly identify a linear hyperplane arrangement in $M_\bR$ such that the genericity condition holds if and only if none of these hyperplanes contain $M_\bR^W$. We identify the GIT chambers for the action of $T$ on $X$ in the sense of \cite{DolgachevHu} with the connected components of the complement of this hyperplane arrangement.

\begin{cor}[\autoref{cor:equivalences}]
Under the hypotheses of \autoref{thm:magic_windows}, if $\ell,\ell' \in \op{Pic}(BG)_\bR$ are such that $X^{\rm{ss}}=X^{\rm{s}}$, then we have an equivalence ${\rm D}^b(X^{\rm{ss}}(\ell)/G) \simeq {\rm D}^b(X^{\rm{ss}}(\ell') /G)$.
\end{cor}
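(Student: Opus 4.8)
The plan is to deduce the corollary directly from \autoref{thm:magic_windows}, exploiting the fact that the magic window category $\cM(\delta + \ol{\nabla})$ is defined purely in terms of the representation theory of $G$ and has no dependence on the stability parameter. First I would fix once and for all a single $\delta \in M_\bR^W$ with $M \cap \partial(\delta + \ol{\nabla}) = \emptyset$; the existence of such a $\delta$ is discussed below. Identifying $\op{Pic}(BG)_\bR \cong M_\bR^W$ as in the introduction, both $\ell$ and $\ell'$ satisfy $X^{\rm{ss}} = X^{\rm{s}}$ by hypothesis, so \autoref{thm:magic_windows} applies to each of them with this \emph{same} $\delta$ and produces restriction equivalences
\[
\op{res}_\ell \colon \cM(\delta + \ol{\nabla}) \xrightarrow{\ \simeq\ } {\rm D}^b(X^{\rm{ss}}(\ell)/G), \qquad \op{res}_{\ell'} \colon \cM(\delta + \ol{\nabla}) \xrightarrow{\ \simeq\ } {\rm D}^b(X^{\rm{ss}}(\ell')/G)
\]
emanating from a common source dg-category. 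The desired equivalence is then the composite $\op{res}_{\ell'} \circ \op{res}_\ell^{-1}$, exactly the recipe for \autoref{ans:G} recorded just after its statement.

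The only step that genuinely needs an argument is the existence of an admissible $\delta$. If $M_\bR^W = 0$ then $\op{Pic}(BG)_\bR = 0$, there is a single GIT quotient, and the statement is vacuous, so I may assume $M_\bR^W \neq 0$. For $\delta$ ranging over a fixed bounded region only finitely many lattice points $m \in M$ can lie in a translate $\delta + \ol{\nabla}$, and for each such $m$ and each facet hyperplane $\{x : \pair{x}{v} = c\}$ of the polytope $\ol{\nabla}$ the set of $\delta$ placing $m$ on that facet of $\delta + \ol{\nabla}$ is contained in the affine slice $\{\delta \in M_\bR^W : \pair{\delta}{v} = \pair{m}{v} - c\}$. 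Each such slice is a proper affine subspace of $M_\bR^W$ provided the functional $v$ does not vanish identically on $M_\bR^W$; I expect this to follow from (or be a routine consequence of) the standing genericity hypothesis on $X$, which is by design a transversality condition between $M_\bR^W$ and precisely the hyperplane arrangement governing $\ol{\nabla}$. A $\delta$ outside the resulting finite union of proper affine subspaces then exists by a dimension count, and in fact the admissible $\delta$ form a dense open subset of $M_\bR^W$.

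I do not anticipate a real obstacle: the corollary is the formal shadow of \autoref{thm:magic_windows}, namely that one and the same window category simultaneously computes ${\rm D}^b$ of every GIT quotient with $X^{\rm{ss}} = X^{\rm{s}}$, so all of these quotients become equivalent to one another. The only care required is the bookkeeping with the identification $\op{Pic}(BG)_\bR \cong M_\bR^W$ and the transversality argument for $\delta$ sketched above; note also that, as developed in the later sections on the K\"ahler moduli space, the composites $\op{res}_{\ell'} \circ \op{res}_\ell^{-1}$ obtained this way fit together coherently, so the equivalences are not merely isolated existence statements.
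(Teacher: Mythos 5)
Your proposal is correct and is essentially the paper's own argument: the equivalence is exactly the composite $\op{res}_{X^{\rm ss}(\ell')} \circ \op{res}_{X^{\rm ss}(\ell)}^{-1}$ through a single magic window $\cM(\delta + \ol{\nabla})$. The one point you leave as an expectation --- that an admissible $\delta \in M_\bR^W$ exists --- is indeed a consequence of the genericity hypothesis via \autoref{cor:parallel-facets} (each facet of $\ol{\nabla}$ is parallel to a facet of $\ol{\Sigma}$, so no facet hyperplane contains $M_\bR^W$), and is handled explicitly in the paper by \autoref{lem:hyperplane} together with \autoref{rem:alternate_main_thm}.
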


Another application of \autoref{thm:magic_windows} is to construct an explicit and efficiently computable basis for the algebraic $K$-theory of the GIT quotient $X^{\rm{ss}}(\ell)/G$. In \autoref{prop:K_theory} we show that the algebraic $K$-theory of $X^{\rm{ss}}(\ell)/G$ is a free $\bZ$-module and has a basis of locally free sheaves induced from those irreducible representations of $G$ whose character lies in $\delta + \ol{\nabla}$. When $k=\bC$, the same is true for the topological $K$-theory and orbifold cohomology of the analytification of $X^{\rm{ss}}(\ell)/G$. By providing an explicit basis in terms of the representation theory of $G$, we refine the classic work of \cite{EllingsrudStromme}, which provides an explicit presentation of the Chow ring of $X^{\rm{ss}}/G$.

Our methods are closely related to the approach to derived equivalences initiated in \cite{VdBFlops}, where the derived category of two different resolutions of a singularity are identified with that of a common non-commutative resolution. In fact, the ``essential surjectivity'' part of \autoref{thm:magic_windows} builds on the methods of the recent preprint \cite{SVdB}, which provides an explicit algorithm for constructing non-commutative resolutions of the singular affine scheme $\Spec(\cO_X^G)$. The original algorithm of \cite{SVdB} involves starting with an equivariant vector bundle on $X$, and then taking iterated mapping cones with certain special complexes (called $C_{\lambda,\chi}$) until it lies in the category $\cM(\delta + \ol{\nabla})$. This algorithm can modify the complex over $X^{\rm{ss}}$, so it does not exactly suit the purpose of showing that every complex on ${\rm D}^b(X^{\rm{ss}}/G)$ is generated by the restriction of complexes in $\cM(\delta + \ol{\nabla})$. One of our key observations is that one can modify the algorithm of \cite{SVdB} without significantly modifying the combinatorial scaffolding in order to incorporate the interaction with GIT. The upshot is the construction (see \autoref{cor:tilting}) of a locally free sheaf on $X/G$ which generates $\cM(\delta + \ol{\nabla})$ and whose restriction to $X^{\rm{ss}}(\ell)/G$ is a tilting generator for ${\rm D}^b(X^{\rm{ss}}(\ell)/G)$. The algebra of endomorphisms of this tilting generator is precisely the non-commutative resolution constructed in \cite{SVdB}.

To date, the most progress on the local D-equivalence conjecture has been made in the case when $Y \to \Spec(R)$ is a symplectic resolution, meaning that $\Spec(R)$ is normal and there is a non-degenerate closed 2-form in ${\rm H}^0(Y,\Omega^2_Y)$. The conjecture was solved for resolutions of symplectic finite quotient singularities in \cite{BezrukavnikovKaledin}, and for symplectic resolutions for which $\Spec(R)$ admits a $\bG_m$-action with ``positive weights'' in \cite{Kaledin},\footnote{Technically this result is not stated in \cite{Kaledin}, but it follows from a short argument using the methods and results there.} using the theory of Fedosov quantization in characteristic $p>0$ to construct tilting bundles which lift to characteristic $0$. We recover this result by entirely different methods for symplectic resolutions which arise as hyperk\"ahler reductions of a symplectic linear representation $X$ of a reductive group $G$. In this case, there is a canonical $G$-equivariant algebraic moment map $\mu \colon X \to \fg^\dual$. We define $X_0 = \mu^{-1}(0)$, and we define the hyperk\"ahler reduction to be $X_0^{\rm{ss}}(\ell) / G$.

\begin{thm}[\autoref{thm:hyperkaehler}]
Let $X$ be a symplectic linear representation of a reductive group $G$ such that the quasi-symmetric representation $X \oplus \fg$ satisfies the genericity condition of \autoref{thm:magic_windows}. Then for any pair of $\ell, \ell' \in \op{Pic}(BG)_\bR$ such that $X_0^{\rm{ss}} = X_0^{\rm{s}}$, we have a derived equivalence ${\rm D}^b(X_0^{\rm{ss}}(\ell)/G) \to {\rm D}^b(X_0^{\rm{ss}}(\ell')/G)$.
\end{thm}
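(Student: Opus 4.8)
The plan is to deduce this from \autoref{thm:magic_windows} applied to the quasi-symmetric representation $X\oplus\fg$, by realizing each $D^b(X_0^{\mathrm{ss}}(\ell)/G)$ as a graded category of singularities of one fixed Landau--Ginzburg model. Since $X$ is symplectic we have $X\cong X^\dual$, and since $G$ is reductive $\fg\cong\fg^\dual$, so $V:=X\oplus\fg$ is self-dual, hence quasi-symmetric, and by hypothesis it satisfies the genericity condition of \autoref{thm:magic_windows}; fix a $\delta\in M_\bR^W$ with $M\cap\partial(\delta+\ol\nabla)=\emptyset$. The algebraic moment map gives a $G$-invariant function $W\colon X\times\fg\to\bA^1$, $W(x,\xi)=\pair{\mu(x)}{\xi}$, with $\Crit(W)=\{(x,\xi):\mu(x)=0,\ \xi\cdot x=0\}$; let $\bG_m$ scale the $\fg$-factor with weight one, so that $W$ is semi-invariant of weight one. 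By the equivariant form of the derived Knörrer periodicity theorem (Isik, Shipman, and Hirano), for an open $G$-invariant $U_0\subseteq X$ one has an equivalence between $D^{\mathrm{gr}}_{sg}([U_0\times\fg]/(G\times\bG_m),W)$ and the derived category of the (derived) vanishing locus of $\mu$ inside $[U_0/G]$. The hypothesis $X_0^{\mathrm{ss}}(\ell)=X_0^{\mathrm{s}}(\ell)$ forces the $G$-stabilizers along $X_0^{\mathrm{ss}}(\ell)$ to be finite, so $d\mu$ is surjective there and $\mu$ is a regular section of the trivial $\fg^\dual$-bundle along its zero locus; since moreover $X_0^{\mathrm{ss}}(\ell)=X^{\mathrm{ss}}(\ell)\cap X_0$, this gives $D^b(X_0^{\mathrm{ss}}(\ell)/G)\simeq D^{\mathrm{gr}}_{sg}([X^{\mathrm{ss}}(\ell)\times\fg]/(G\times\bG_m),W)$.

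The core of the argument is then a version of \autoref{thm:magic_windows} for the graded singularity category of $(V,W)$: the image $\cM_W(\delta+\ol\nabla)$ of $\cM(\delta+\ol\nabla)$ under the canonical functor $D^b(V/(G\times\bG_m))\to D^{\mathrm{gr}}_{sg}(V/(G\times\bG_m),W)$ should restrict to an equivalence onto $D^{\mathrm{gr}}_{sg}(V^{\mathrm{ss}}(\ell)/(G\times\bG_m),W)$ for every $\ell$ for which the $T$-problem of $V$ is generic. I expect this to follow from the same proof as \autoref{thm:magic_windows}: the Kempf--Ness stratification of $V$, the weight estimates cutting out $\ol\nabla$, and the resulting semiorthogonal decompositions are all unaffected by the $G$-invariant potential $W$, which only enters through the (unchanged) categories attached to the strata, and one checks as in the proof of \autoref{cor:equivalences} that genericity of the $T$-problem for $V$ forces $V^{\mathrm{ss}}(\ell)=V^{\mathrm{s}}(\ell)$. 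Finally one must match $D^{\mathrm{gr}}_{sg}(V^{\mathrm{ss}}(\ell)/(G\times\bG_m),W)$ with the category in the previous paragraph: a graded singularity category depends only on an open neighbourhood of its critical support, and a direct analysis of the Hilbert--Mumford criterion shows that on $V^{\mathrm{ss}}(\ell)$ the equations $\mu(x)=0$ and $\xi\cdot x=0$ (i.e.\ $\xi\in\fg_x$) force $x$ to be semistable for the corresponding linearization of $X$, so $\Crit(W)\cap V^{\mathrm{ss}}(\ell)$ already lies inside $X^{\mathrm{ss}}\times\fg$ and the two singularity categories coincide.

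Chaining these equivalences for two linearizations $\ell,\ell'$ that are generic in the above sense gives $D^b(X_0^{\mathrm{ss}}(\ell)/G)\simeq\cM_W(\delta+\ol\nabla)\simeq D^b(X_0^{\mathrm{ss}}(\ell')/G)$, and the case of arbitrary $\ell,\ell'$ with $X_0^{\mathrm{ss}}=X_0^{\mathrm{s}}$ is handled exactly as in \autoref{cor:equivalences}, by connecting each $\ell$ to a generic one within a cell on which $X_0^{\mathrm{ss}}(\ell)$ does not change. The main obstacle is the middle step: establishing the magic-window theorem for the graded singularity category of $(V,W)$ and, above all, pinning down the precise relationship between GIT (in)stability for $X$ and for $X\oplus\fg$ in a neighbourhood of $\Crit(W)$ --- this comparison is exactly the point at which the genericity hypothesis is needed for $X\oplus\fg$ rather than for $X$ alone.
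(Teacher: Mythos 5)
Your proposal follows essentially the same route as the paper: realize ${\rm D}^b(X_0^{\rm ss}(\ell)/G)$ as a graded singularity category of the Landau--Ginzburg model $W(x,\xi)=\langle\mu(x),\xi\rangle$ on $X\times\fg$ via a stacky generalization of Isik's theorem, compare $\Crit(W)\cap(X\times\fg)^{\rm ss}(\ell)$ with $\Crit(W)\cap(X^{\rm ss}(\ell)\times\fg)$ via the Hilbert--Mumford criterion together with the fact that the singularity category only sees a neighborhood of the critical locus, transport the magic window $\cM(\delta+\ol{\nabla})$ through these identifications, and reduce arbitrary $\ell,\ell'$ to $\ol{\Sigma}$-generic ones by perturbation. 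The one step you flag as the main obstacle --- a magic-window theorem for the graded singularity category of $(X\oplus\fg,W)$ --- is handled in the paper not by re-running the proof of \autoref{thm:magic_windows} in the presence of the potential, but formally: the window and the restriction functors are module categories and module functors over $\Perf(\bA^1/\bG_m)^\otimes$, so the already-established equivalence of ambient equivariant derived categories induces, via an integral-transform argument, an equivalence on ${\rm D}^b(W^{-1}(0)^{\rm ss}/G)$ and hence on the graded singularity categories.
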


The theorem extends the results of Kaledin to new examples of symplectic resolutions which are Deligne--Mumford stacks, but the real novelty of our approach is that it is more geometric and explicit. In particular, it clarifies the dependence of the derived equivalence for hyperk\"ahler flops in \autoref{thm:hyperkaehler} and for the class of flops in \autoref{cor:equivalences} on a certain path in the space of K\"{a}hler parameters, thereby allowing us to construct an action of the fundamental groupoid of this space on the corresponding derived category.

\subsection{Actions of fundamental groupoids on derived categories}

Let us recall the cartoon sketch of why homological mirror symmetry predicts that two birational Calabi--Yau manifolds $Y$ and $Y'$ should have equivalent derived categories: if $Y$ possesses a homological mirror partner $Y^\circ$, so that ${\rm D}^b(Y)$ is equivalent to the Fukaya category ${\rm DFuk}(Y^\circ)$, then any two points in the moduli space of complex structures on $Y^\circ$ correspond to two different K\"ahler manifolds (with the same underlying symplectic manifold), and any path between these two points in the moduli space leads to an equivalence between the Fukaya categories of these two K\"ahler manifolds via symplectic parallel transport. Mirror symmetry also identifies the moduli space of complex structures on $Y^\circ$ with the ``K\"ahler moduli space'' of $Y$, an open subset $\cK_Y \subset {\rm H}^{1,1}(Y) / 2\pi i \op{NS}(Y)$. So transporting our path in the complex moduli space of $Y^\circ$, we may find that the path in ${\rm H}^{1,1}(Y)$ has left the K\"ahler cone of $Y$ and entered the K\"ahler cone of $Y'$. It is then expected that $Y^\circ$, with this new complex structure, is a homological mirror partner to $Y'$, giving the equivalence ${\rm D}^b(Y) \simeq {\rm DFuk}(Y^\circ) \simeq {\rm D}^b(Y')$. This equivalence depends on the homotopy class of the path between the corresponding points in $\cK_Y$, and therefore one finds an action of the fundamental groupoid of $\cK_Y$ on ${\rm D}^b(Y)$.

Our final application of \autoref{thm:magic_windows} is to fully realize this K\"{a}hler moduli space picture in the examples we are studying. Given a polytope $\ol{\nabla} \subset M_\bR$, we define a locally finite $M^W$-periodic hyperplane arrangement $\cA = \{ H_\alpha \subset M_\bR^W \}$ characterized by the property that $\partial(\delta + \ol{\nabla}) \cap M = \emptyset$ if and only if $\delta$ lies in the complement of this hyperplane arrangement (\autoref{lem:hyperplane}). Our version of the ``complexified K\"ahler moduli space'' of the GIT quotient $X^{\rm{ss}}(\ell)/G$ is the space
\[
\cK_X := (M_\bC^W \setminus {\bigcup}_\alpha H_\alpha) / M^W.
\]
In \autoref{prop:groupoid} we give a presentation for the fundamental groupoid $\Pi_1(\cK_X)$ of $\cK_X$, and in \autoref{prop:representation} we construct a representation of $\Pi_1(\cK_X)$ in the homotopy category of dg-categories, i.e., a functor $F \colon \Pi_1(\cK_X) \to \op{Ho}(\dgCat)$, mapping any point of the form $\delta + 0 i \in M_\bC^W \setminus \bigcup_\alpha H_\alpha$ to the category $F([\delta]) = \cM(\delta + \ol{\nabla})$, which, by \autoref{thm:magic_windows}, is canonically identified under restriction with any ${\rm D}^b(X^{\rm{ss}}(\ell)/G)$ for which $X^{\rm{ss}}(\ell) = X^{\rm{s}}(\ell)$. We also extend our results to the case of the hyperk\"ahler quotient $X_0^{\rm{ss}}(\ell)/G$ for a symplectic linear representation $X$ of $G$ in \autoref{cor:representation_hyperkahler} -- in this case we obtain an action of $\Pi_1(\cK_{X\oplus \fg})$ on ${\rm D}^b(X_0^{\rm{ss}}(\ell)/G)$.

Actions of fundamental groupoids of this form have been intensively studied in connection with geometric representation theory. For instance, Cautis, Kamnitzer, and Licata \cite{CKL} have established generalized braid group actions on the derived category of certain Nakajima quiver varieties using very different methods. There the derived categories of several different quiver varieties form the ``weight spaces'' for a certain categorified representation of a Lie algebra, and the action of the generalized braid group is described implicitly via this categorical Lie algebra action \cite{CautisKamnitzer}. Even for quiver varieties our results are new, as \cite{CKL} only treats quiver varieties corresponding to simply-laced graphs (i.e., no loops or multiple edges); see \S\ref{sect:hilb} for a discussion of what our methods produce for the Hilbert scheme of $n$ points on $\bC^2$, the quiver variety associated to the quiver with one vertex and one loop.

Our results are also closely related to a conjectural picture developed by Bezrukavnikov and Okounkov, as described for instance in \cite[Conjecture 1]{ABM}. Their work, along with their coauthors, proposes a vast generalization of Kazhdan--Lusztig theory, whose starting point is the conjectural action of certain fundamental groupoids of complexified K\"{a}hler moduli spaces on the derived categories of symplectic resolutions. These are established for the Springer resolution and Slodowy slices in \cite{KT,BMR, BezrukavnikovRiche}, and recently for the cotangent bundles of partial flag varieties \cite{Boger}. The formulation of the conjecture involves quantization in characteristic $p$, which in principle gives an action of the fundamental groupoid of the complement of the complexification of some periodic real hyperplane arrangement, but at the moment the structure of that arrangement is still conjectural outside of the most basic examples.

Our method essentially substitutes the characteristic $0$ categories ${\rm D}_{\op{sing}}(\cM(\delta + \ol{\nabla}))$ for the quantizations in characteristic $p$, allowing us to completely and explicitly describe the groupoid actions conjectured by Bezrukavnikov and Okounkov for Deligne--Mumford stacks which arise as hyperk\"ahler reductions of symplectic linear representations. Our methods also apply to GIT quotients which are not algebraic symplectic, implying an intriguing extension of this story beyond the setting of symplectic resolutions. We hope that our combinatorial construction of these groupoid actions will advance the broader investigation into generalizations of Kazhdan--Lusztig theory, opening research into natural questions such as how to generalize the Hecke algebra and the theory of canonical bases into this context.

\subsection{Authors' note}

\autoref{thm:magic_windows} is a key input to an ongoing project of the first author with Davesh Maulik and Andrei Okounkov, and he gratefully acknowledges their guidance and encouragement, as well as many useful conversations about symplectic resolutions and quantization. The main theorem above will be used in follow-up work to construct ``stable envelope functors" and to categorify the ideas of \cite{MaulikOkounkov} for algebraic symplectic varieties.

The first author would also like to thank Ed Segal for many enlightening conversations over the years, and for explaining the methods of \cite{SVdB} and encouraging us to pursue the actions of fundamental groupoids.

\subsection{Notation} \label{sec:notation}

We fix a base field $k$ of characteristic $0$, and we work over this field throughout the paper. $G$ will denote a split reductive group over $k$, and $X$ will denote a linear representation of $G$ over $k$. Let $\beta_1, \dots, \beta_d$ be the weights of $X$ (counted with multiplicity). 

We fix once and for all a maximal torus and Borel subgroup $T \subset B \subset G$. We denote the character lattice of $T$ by $M$, and the cocharacter lattice by $N$. We denote the Weyl group by $W$. We fix a choice of $W$-invariant inner product $\pair{}{}$ on $M$ and $N$. We use the convention that the weights in the Lie algebra of $B$ are {\it negative} roots; this is consistent with \cite{SVdB}. In particular, the choice of $B$ determines a choice of dominant chamber $M^+_\bR \subset M_\bR$. We let $\rho$ denote the sum of the positive roots of $G$ divided by $2$.

Given a dominant weight $\chi$ of a connected group $G$, we let $V(\chi)$ denote the irreducible representation of $G$ of highest weight $\chi$. We let $\Irr(G)$ denote the set of irreducible representations of $G$, and we let $\op{Char}(U) \subset M$ denote the set of non-zero weights appearing in a representation $U$. We will denote the group of characters by $\op{Pic}(BG)$, and we will often consider the real vector space spanned by characters  $\op{Pic}(BG)_\bR := \op{Pic}(BG)\otimes_\bZ \bR$. This is canonically identified with the Weyl-invariant subspace $\op{Pic}(BG)_\bR = M_\bR^W \subset M_\bR$.

\section{Zonotopes and Deligne--Mumford GIT quotients} \label{sect:zonotope}

In this section we consider a \df{quasi-symmetric} linear representation $X$ of a reductive group $G$. By definition \cite{SVdB} this means that if we let $\beta_i \in M$ for $i=1,\ldots,d$ denote the $T$-weights of the representation $X^\dual$, indexed with repetitions according to the dimension of the corresponding weight space, then for any line $L \subset M_\bR$ we have $\sum_{\beta_i \in L} \beta_i = 0$. Note that any self-dual representation is quasi-symmetric.

In \cite{SVdB}, {\v S}penko and Van den Bergh also consider the following convex regions in $M_\bR$:
\begin{align*}
\ol{\Sigma} &:= \left\{ \left. {\sum}_i a_i \beta_i\, \right|\, a_i \in [-1,0]  \right\}, \\
\ol{\Sigma}_\varepsilon &:= \bigcup_{r>0} \ol{\Sigma} \cap (r\varepsilon + \ol{\Sigma}).
\end{align*}
Note that $\ol{\Sigma}$ can alternatively be described as the Minkowski sum of the intervals $[-\beta_i, 0]$ (an object known as a \df{zonotope}), or as the convex hull of the character of the exterior algebra $\bigwedge^\ast X \in \Rep(G)$. The fact that $X$ is quasi-symmetric implies that $\ol{\Sigma}$ can also be described as the Minkowski sum $\sum_i [0,\beta_i]$, as can be verified by breaking the Minkowski sum into the partial sums $\sum_{\beta_i \in L} [0,\beta_i]$ for each line $L \subset M_\bR$. Thus $\ol{\Sigma} = -\ol{\Sigma}$, and $\ol{\Sigma}$ is the convex hull of the character of $\bigwedge^\ast (X^\dual)$. Note also that $\ol{\Sigma}_\varepsilon$ is just $\ol{\Sigma}$ with some of its boundary faces removed. Following \cite{SVdB}, we say that $\ell \in M_\bR$ is \df{generic} for $\ol{\Sigma}$ if it lies in the linear span of the points of $\ol{\Sigma}$ but is not parallel to any face of $\ol{\Sigma}$.

Recall that given a character $\ell \in \op{Pic}(BG)$, we can define a $G$-equivariant open semistable locus $X^{\rm{ss}}(\ell) \subset X$ by
\[
X^{\rm{ss}}(\ell) = \{x \in X \mid \exists k>0 \text{ and } s \in \Gamma(\cO_X \otimes (k \ell))^G \text{ such that } s(x) \neq 0\}.
\]
$X^{\rm{ss}}(\ell)/G$ admits a good quotient $X^{\rm{ss}}(\ell) \git G$, which is proper over the affine quotient $\Spec(\cO_X^G)$. We recall that the \emph{Hilbert--Mumford criterion} for semistability states that a point $x \in X$ is semistable if and only if $\pair{\lambda}{\ell}<0$ for all one-parameter subgroups $\lambda \colon \bG_m \to G$ for which $\lim_{t\to 0} \lambda(t) \cdot x$ exists. When points of $X^{\rm{ss}}(\ell)$ have finite stabilizers, then $X^{\rm{ss}}(\ell)/G$ is a smooth Deligne--Mumford stack, and $X^{\rm{ss}}(\ell)/G \to \Spec(\cO_X^G)$ is a resolution of singularities. As remarked in the introduction, the Hilbert--Mumford criterion allows us to define $X^{\rm ss}(\ell)$ for any $\ell \in \op{Pic}(BG)_\bR$.

We can regard any $G$-representation as a $T$-representation and restrict a character $\ell$ to $\op{Pic}(BT)_\bR = M_\bR$. We denote the semistable locus with respect to the $T$ action as $X^{T\text{\rm{-ss}}}(\ell)$. One of the main observations of this paper is the following:

\begin{prop} \label{prop:semistable}
Let $X$ be a quasi-symmetric representation of a reductive group $G$ such that the action of $T$ on $X$ has generically finite stabilizers, and let $\ell \in \op{Pic}(BG)_\bR$ be a character. Then the following are equivalent:
\begin{enumerate}[\indent \rm (1)]
\item the GIT quotient $X^{T\text{\rm{-ss}}}(\ell) / T$ of $X$ by $T$ is Deligne--Mumford,
\item for any proper subspace $V \subsetneq M_\bR$, there is a one-parameter subgroup $\lambda$ such that $\pair{\lambda}{\beta_i} = 0$ for all $\beta_i \in V$ and $\pair{\lambda}{\ell} \neq 0$, and
\item $\ell$ is generic for $\ol{\Sigma}$.
\end{enumerate}
These conditions imply that $X^{\rm{ss}}(\ell)/G$ is Deligne--Mumford.
\end{prop}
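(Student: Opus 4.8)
The plan is to translate all three conditions, together with the concluding assertion, into one combinatorial statement about which of the weights $\beta_1,\dots,\beta_d$ lie in which linear subspaces of $M_\bR$.

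\emph{Setup.} The stabilizer in $T$ of a point of $X$ all of whose weight coordinates are nonzero is $\bigcap_i\ker(\beta_i)$, so the hypothesis that $T$ acts with generically finite stabilizers says exactly that $\{\beta_1,\dots,\beta_d\}$ spans $M_\bR$; in particular $\ol{\Sigma}$ then has full linear span, so the first clause of ``$\ell$ is generic for $\ol{\Sigma}$'' is automatic. I would next spell out the Hilbert--Mumford criterion for the $T$-action in weight coordinates: writing $I(x)=\{i : x_i\neq 0\}$, the limit $\lim_{t\to 0}\lambda(t)\cdot x$ exists iff $\pair{\lambda}{\beta_i}\le 0$ for all $i\in I(x)$, and a routine dual-cone computation then yields $x\in X^{T\text{-ss}}(\ell)\iff\ell\in\op{cone}\{\beta_i : i\in I(x)\}$; moreover $\op{Stab}_T(x)$ is finite iff $\{\beta_i : i\in I(x)\}$ spans $M_\bR$, and every subset of $\{1,\dots,d\}$ occurs as some $I(x)$.

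\emph{The equivalences.} Both $(2)$ and $(3)$ are equivalent to ``$\ell\notin\op{span}\{\beta_i\in H\}$ for every hyperplane $H\subset M_\bR$.'' For $(2)$: the one-parameter subgroups $\lambda$ with $\pair{\lambda}{\beta_i}=0$ for all $\beta_i\in V$ are the lattice points of $(\op{span}\{\beta_i\in V\})^\perp$, which contains some $\lambda$ with $\pair{\lambda}{\ell}\neq0$ iff $\ell\notin\op{span}\{\beta_i\in V\}$; thus $(2)$ reads ``$\ell\notin\op{span}\{\beta_i\in V\}$ for every proper $V$,'' and since enlarging $V$ to a hyperplane only enlarges $\op{span}\{\beta_i\in V\}$ it is enough to test hyperplanes. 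For $(3)$: using $\ol{\Sigma}=\sum_i[0,\beta_i]$ (valid by quasi-symmetry), the face of $\ol{\Sigma}$ maximizing a functional $\lambda\neq 0$ is a translate of $\sum_{\pair{\lambda}{\beta_i}=0}[0,\beta_i]$, hence has linear span $\op{span}\{\beta_i\in\lambda^\perp\}$; as $\lambda$ ranges over $N_\bR\setminus\{0\}$, $\lambda^\perp$ ranges over all hyperplanes (and, by the spanning hypothesis, all these faces are proper), so ``$\ell$ parallel to no face of $\ol{\Sigma}$'' is the displayed condition. For $(1)\Leftrightarrow(2)$ I would invoke the one place quasi-symmetry is essential: for any subspace $V$, the weights $\beta_i\in L$ on each line $L\subseteq V$ sum to $0$ and hence include both a positive and a negative multiple of a generator of $L$, so $\op{cone}\{\beta_i\in V\}$ is closed under negation and equals $\op{span}\{\beta_i\in V\}$. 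By the Setup, $(1)$ fails iff some $x$ has $\op{span}\{\beta_i : i\in I(x)\}$ proper and $\ell\in\op{cone}\{\beta_i : i\in I(x)\}$; enlarging $I(x)$ to the flat $\{i : \beta_i\in\op{span}\{\beta_j : j\in I(x)\}\}$ leaves the span unchanged and, by the displayed equality, upgrades the cone to that span, so $(1)$ fails iff $\ell\in\op{span}\{\beta_i\in V\}$ for some proper $V$, i.e.\ iff $(2)$ fails.

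\emph{The concluding assertion.} Assuming $(1)$--$(3)$, I must show every $x\in X^{\rm{ss}}(\ell)$ has finite $G$-stabilizer. A $G$-invariant section is $T$-invariant, so $X^{\rm{ss}}(\ell)\subseteq X^{T\text{-ss}}(\ell)$, and $X^{\rm{ss}}(\ell)$ is $G$-stable. Any $x\in X^{\rm{ss}}(\ell)$ degenerates, via a one-parameter subgroup of $G$, to a polystable point $x_0\in\ol{G\cdot x}\cap X^{\rm{ss}}(\ell)$; by upper semicontinuity of stabilizer dimension along this degeneration it suffices to see $\dim\op{Stab}_G(x_0)=0$. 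But the stabilizer of a polystable point is reductive, so if it were positive-dimensional it would contain some $\bG_m$; conjugating that $\bG_m$ into $T$ by an element $g\in G$ produces a positive-dimensional subgroup of $\op{Stab}_T(g^{-1}x_0)$ with $g^{-1}x_0\in X^{\rm{ss}}(\ell)\subseteq X^{T\text{-ss}}(\ell)$, contradicting $(1)$. I expect the main obstacle to be this last step rather than the equivalences (which are bookkeeping with Hilbert--Mumford plus the one-line quasi-symmetry lemma): condition $(1)$ constrains only $T$-stabilizers, hence only tori, so one must explain why it nonetheless forbids all positive-dimensional (including unipotent) $G$-stabilizers on the semistable locus, and it is the passage through polystable points, whose stabilizers are reductive, that supplies exactly this.
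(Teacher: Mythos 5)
Your proof is correct, and on the key implication it takes a genuinely different route from the paper. The paper proves $(1)\Rightarrow(2)$ and $(2)\Rightarrow(3)$ much as you do (the same quasi-symmetry trick forces $\pair{\lambda}{\beta_i}=0$ from $\pair{\lambda}{\beta_i}\geq 0$ on a line; the face structure of the zonotope is quoted from McMullen rather than derived from the Minkowski-sum description), but for $(3)\Rightarrow(1)$ it does \emph{not} argue combinatorially: it invokes \autoref{thm:magic_windows} to produce a finite set of vector bundles generating ${\rm D}^b(X^{T\text{\rm{-ss}}}(\ell)/T)$ and observes that ${\rm D}^b(\{x\}/T')$ for a positive-dimensional stabilizer $T'$ admits no finite generating set of perfect complexes. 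This creates a forward dependence on the main theorem (defused by a parenthetical noting that the main theorem only uses $(1)\Rightarrow(2)\Rightarrow(3)$). Your argument closes the cycle directly and elementarily: the toric Hilbert--Mumford criterion gives $x\in X^{T\text{\rm{-ss}}}(\ell)\iff\ell\in\op{cone}\{\beta_i: i\in I(x)\}$, and quasi-symmetry upgrades $\op{cone}\{\beta_i\in V\}$ to $\op{span}\{\beta_i\in V\}$, so all three conditions collapse to ``$\ell\notin\op{span}\{\beta_i\in H\}$ for every hyperplane $H$.'' This buys a self-contained statement with no circularity concerns, at the cost of not exhibiting the derived-category consequence that the paper's argument records in passing. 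Your treatment of the final assertion (reduce to the closed orbit in each fiber, whose stabilizer is reductive by Matsushima, then conjugate a $\bG_m$ into $T$ and contradict $(1)$) is essentially identical to the paper's, just spelled out more carefully via upper semicontinuity of stabilizer dimension.
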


\begin{proof}
\noindent $(1) \Rightarrow (2)$:
Choose a subspace $V \subsetneq M_\bR$ and consider a generic point $x \in \sum_{\chi \in V} X_{\chi} \subset X$, which will have non-vanishing coordinates in the eigenspace for each weight $\beta_i$ such that $\beta_i \in V$. Then $x$ has a positive dimensional stabilizer. Since $X^{T\text{\rm{-ss}}}(\ell)/T$ is Deligne--Mumford, there must be a $\lambda \in N$ which destabilizes $x$. By quasi-symmetry, the condition that $\pair{\lambda}{\beta_i} \geq 0$ for all $\beta_i \in V$ actually implies that $\pair{\lambda}{\beta_i} = 0$ for all $\beta_i \in V$, hence we have $(2)$.

\medskip
\noindent $(2) \Rightarrow (3)$:
Every face of $\ol{\Sigma}$ is a Minkowski sum of the form
\[
F = a_1 \beta_{\sigma(1)} + \cdots + a_k \beta_{\sigma(k)} + [-\beta_{\sigma(k+1)},0] + \cdots + [-\beta_{\sigma(d)},0],
\]
where $a_k \in \{-1,0\}$ and $\sigma$ is some permutation of the set $\{1,\ldots,d\}$ \cite{mcmullen}. So after translation $F$ lies in the subspace spanned by $\beta_{\sigma(k+1)},\ldots,\beta_{\sigma(d)}$. To prove (3), we may assume $F$ spans a proper subspace $V \subsetneq M_\bR$. The existence of a cocharacter vanishing on $V$ but not on $\ell$ implies that $\ell$ is not parallel to $F$. $T$ acts with generically finite stabilizers on $X$ if and only if $\op{Char}(X)$ spans $M_\bR$, and in particular this implies that the linear span of $\ol{\Sigma}$ is all of $M_\bR$ and includes $\ell$.

\medskip
\noindent $(3) \Rightarrow (1)$:
This follows from \autoref{thm:magic_windows} below (which only uses the implications $(1) \Rightarrow (2) \Rightarrow (3)$ from the current proof), which states that ${\rm D}^b(X^{T\text{\rm{-ss}}}(\ell)/T)$ is generated by a finite set of vector bundles if $\ell$ is generic for $\ol{\Sigma}$. If $X^{T\text{\rm{-ss}}}/T$ were not Deligne--Mumford, then there would be a closed point $x \in X^{T\text{\rm{-ss}}}(\ell)$ stabilized by a sub-torus $T' \subset T$. The map ${x}/T' \to X^{T\text{\rm{-ss}}}(\ell)/T$ is affine, which implies that any generating set for ${\rm D}^b(X^{T\text{\rm{-ss}}}(\ell)/T)$ pulls back to a generating set for ${\rm D}^b({x}/T')$. The latter does not admit a finite generating set of perfect complexes, so we see that $X^{T\text{\rm{-ss}}}/T$ must be Deligne--Mumford.

\medskip
\noindent \textit{Showing $X^{\rm{ss}}/G$ is Deligne--Mumford}:
Every fiber of the map $X^{\rm{ss}}(\ell)/G \to X^{\rm{ss}}(\ell) \git G$ contains a point whose stabilizer group is reductive, so $X^{\rm{ss}}(\ell)/G$ is Deligne--Mumford if and only if every point $x \in X$ with a $\bG_m$ in its stabilizer group is unstable. By replacing $x$ with $g\cdot x$ for some $g\in G$, we may assume that $x$ has a positive dimensional stabilizer in $T$, and because all $G$-semistable points are $T$-semistable we thus can exhibit a point which is $T$-semistable and has positive dimensional stabilizer in $T$. This reduces the claim to the case where $G=T$.
\end{proof}

\subsection{Wall-and-chamber decompositions in the quasi-symmetric case}

For any linear representation $X$, the space $M_\bR^W$ is canonically identified with the equivariant N\'eron--Severi group $\op{NS}^G(X) \otimes \bR$ studied in \cite{DolgachevHu}. There, Dolgachev and Hu construct a finite collection of closed subsets of $M_\bR^W$ called walls, each a union of finitely many rational polyhedral cones, such that the connected components of the complements of these walls are precisely the characters of $G$ such that $X^{\rm ss}(\ell) = X^{\rm s}(\ell)$, and $X^{\rm ss}(\ell)$ is constant for $\ell$ in each chamber. The closure of each chamber is also a rational polyhedral cone.

In general, this wall-and-chamber decomposition can be somewhat complicated, but \autoref{prop:semistable} leads to some simplifications under the assumptions that 1) $X$ is a quasi-symmetric, and 2) the generic stabilizer for the action of $T$ on $X$ is finite. The second assumption is equivalent to $\ol{\Sigma}$, the zonotope associated to the character of $X$, linearly spanning $M_\bR$.

\begin{defn}[Wall and chamber decomposition]
Assuming $\ol{\Sigma}$ spans $M_\bR$, consider the linear hyperplane arrangement $\cA = \{H_\alpha \subset M_\bR\}$ in $M_\bR$ consisting of the linear subspaces parallel to the codimension $1$ faces (facets) of $\ol{\Sigma}$. We denote the set of points of $M_\bR^W$ which are generic for $\ol{\Sigma}$ by
\[
(M_\bR^W)_{\ol{\Sigma}\text{-{\rm gen}}} = M_\bR^W \setminus \left({\bigcup}_\alpha H_\alpha \cap M_\bR^W \right).
\]
\end{defn}

Observe that for any non-trivial one-parameter subgroup $\lambda : \bG_m \to G$, one obtains a face $F_\lambda$ of $\ol{\Sigma}$ by
\[
F_\lambda = \left\{ \left. \sum_{\langle \lambda, \beta_i\rangle > 0} -\beta_i + \sum_{\langle \lambda, \beta_i\rangle = 0} a_i \beta_i \right| a_i \in [-1,0] \right\},
\]
and every face arises in this way. For any codimension $1$-subspace $H$ which is spanned by a subset of the $\beta_i$, there is a non-trivial one-parameter subgroup vanishing on these weights, and it follows that $H$ is parallel to a codimension $1$ face of $\ol{\Sigma}$, i.e. $H \in \cA$. On the other hand if $F_\lambda$ is codimension $1$, then the $\beta_i$ such that $\langle \lambda, \beta_i \rangle = 0$ must span a codimension $1$ subspace. So $\cA$ consists of precisely those linear hyperplanes spanned by a subset of the $\beta_i$.

As an immediate consequence of \autoref{prop:semistable}, we have:

\begin{cor}
If the generic stabilizer of $T$ on $X$ is finite, then the chambers of $M_\bR \cong \op{NS}^T(X)_\bR$ arising from GIT for $X/T$ are \emph{exactly} the connected components of the complement of the linear hyperplane arrangement $\cA$ above.
\end{cor}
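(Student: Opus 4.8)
The plan is to combine the Dolgachev--Hu description of the wall-and-chamber decomposition recalled above — applied to the torus $T$, for which $\op{NS}^T(X)_\bR = M_\bR$ — with \autoref{prop:semistable} in the special case $G = T$. By \cite{DolgachevHu}, the complement in $M_\bR$ of the union of the walls for $X/T$ is exactly the set $\{\ell \in M_\bR : X^{T\text{\rm{-ss}}}(\ell) = X^{T\text{\rm{-s}}}(\ell)\}$, and the GIT chambers are the connected components of this open set. For a torus action, $X^{T\text{\rm{-ss}}}(\ell) = X^{T\text{\rm{-s}}}(\ell)$ holds precisely when every $T$-semistable point has finite stabilizer, i.e.\ precisely when $X^{T\text{\rm{-ss}}}(\ell)/T$ is Deligne--Mumford (cf.\ the footnote in \S1). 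Since the standing assumption of this subsection is that the generic stabilizer of $T$ on $X$ is finite, \autoref{prop:semistable} (with $G = T$, so that $M_\bR^W = M_\bR$ and condition (1) reads ``$X^{T\text{\rm{-ss}}}(\ell)/T$ is Deligne--Mumford'') shows this is equivalent to $\ell$ being generic for $\ol{\Sigma}$.

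It then remains to identify ``generic for $\ol{\Sigma}$'' with ``$\ell \notin \bigcup_\alpha H_\alpha$''. Because the generic $T$-stabilizer is finite, $\ol{\Sigma}$ spans $M_\bR$, so the condition that $\ell$ lie in the linear span of $\ol{\Sigma}$ is automatic, and genericity reduces to the requirement that $\ell$ not be parallel to any proper face of $\ol{\Sigma}$. Here I would invoke the elementary fact that every proper face of a polytope is contained in a facet: interpreting ``$\ell$ parallel to a face $F$'' as $\ell \in \op{span}(F - F)$, and noting $\op{span}(F - F) \subseteq \op{span}(F' - F') = H_\alpha$ whenever $F$ is contained in the facet $F'$ spanning $H_\alpha$, non-parallelism to all proper faces is equivalent to non-parallelism to all facets, i.e.\ to $\ell \notin \bigcup_\alpha H_\alpha$. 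Combining the two steps gives that the complement of the Dolgachev--Hu walls equals $M_\bR \setminus \bigcup_\alpha H_\alpha$; since the connected components of a subset of $M_\bR$ are intrinsic to that subset, the GIT chambers — being by definition the connected components of the former — are exactly the connected components of the latter.

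I do not anticipate a serious obstacle here: the real content is already carried by \autoref{prop:semistable} together with the structural results of \cite{DolgachevHu}. The one place that warrants care is the translation between ``face'' (as in the genericity condition of {\v S}penko and Van den Bergh) and ``facet'' (as in the definition of $\cA$), handled by the polytope fact above; and one must keep in force the standing hypotheses of this subsection — that $X$ is quasi-symmetric and that $T$ has finite generic stabilizer on $X$ — since both are used in applying \autoref{prop:semistable}.
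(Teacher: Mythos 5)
Your proof is correct and fills in exactly the reduction the paper treats as immediate: the paper offers no argument beyond the phrase ``As an immediate consequence of \autoref{prop:semistable},'' and your chain (Dolgachev--Hu walls $\leftrightarrow$ failure of $X^{T\text{\rm{-ss}}}=X^{T\text{\rm{-s}}}$ $\leftrightarrow$ failure of Deligne--Mumford $\leftrightarrow$ non-genericity for $\ol{\Sigma}$ $\leftrightarrow$ membership in some $H_\alpha$) is the intended route, including the correct standing hypotheses. The face-versus-facet point you isolate is the only translation the paper leaves implicit, and your handling of it is right.
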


Most of our results will require the hypothesis that $(M_\bR^W)_{\ol{\Sigma}\text{-{\rm gen}}} \neq \emptyset$. Although the generic locus of $M_\bR$ is always open and dense (still assuming that $\ol{\Sigma}$ spans $M_\bR$), it can unfortunately happen that $(M_\bR^W)_{\ol{\Sigma}\text{-{\rm gen}}} = \emptyset$. Note that by definition $(M_\bR^W)_{\ol{\Sigma}\text{-{\rm gen}}} = \emptyset$ if and only if $M_\bR^W \subset H_\alpha$ for some hyperplane $H_\alpha \in \cA$, or equivalently if there is a subset of weights $\beta_{i_1},\ldots,\beta_{i_n}$ such that
\[
M_\bR^W \subset \op{Span}(\beta_{i_1},\ldots,\beta_{i_n}) \neq M_\bR.
\]

\begin{ex}
The representation $X = T^* {\rm Sym}^p (k^2)$ of ${\bf GL}_2(k)$ satisfies the condition $(M_\bR^W)_{\ol{\Sigma}\text{-{\rm gen}}} \neq \emptyset$ if and only if $p$ is odd. More generally, if $M_\bR^W$ is spanned by the Weyl invariant weights of $X$, then $(M_\bR^W)_{\ol{\Sigma}\text{-{\rm gen}}} = \emptyset$.
\end{ex}

\autoref{prop:semistable} implies that $X^{G\text{\rm{-ss}}}(\ell) = X^{G\text{\rm{-s}}}(\ell)$ for all $\ell \in (M_\bR^W)_{\ol{\Sigma}\text{-{\rm gen}}}$. It follows that each connected component of $(M_\bR^W)_{\ol{\Sigma}\text{-{\rm gen}}}$ is contained in a single GIT chamber of $M_\bR^W$ for the action of $G$ on $X$, and that for each GIT chamber of $(M_\bR^W)_{\ol{\Sigma}\text{-{\rm gen}}}$ there is a finite set of connected components of $(M_\bR^W)_{\ol{\Sigma}\text{-{\rm gen}}}$ whose union is an open dense subset of that chamber. As a consequence, we have

\begin{cor} \label{cor:perturbation}
Assume that $(M_\bR^W)_{\ol{\Sigma}\text{-{\rm gen}}} \ne \emptyset$, then for any $\ell \in M_\bR^W$
\begin{itemize}
\item one can find an $\ell'$ arbitrarily close to $\ell$ such that $X^{G\text{\rm{-ss}}}(\ell') = X^{G\text{\rm{-s}}}(\ell')$, and
\item if $X^{G\text{\rm{-ss}}}(\ell) = X^{G\text{\rm{-s}}}(\ell)$, one can find an $\ell'$ arbitrarily close to $\ell$ with $X^{G\text{\rm{-ss}}}(\ell') = X^{G\text{\rm{-ss}}}(\ell)$ such that $X^{T\text{\rm{-ss}}}(\ell') = X^{T\text{\rm{-s}}}(\ell')$.
\end{itemize}
\end{cor}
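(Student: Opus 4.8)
The plan is to derive both assertions directly from \autoref{prop:semistable} together with the elementary density of the $\ol{\Sigma}$-generic locus, using the observation (recorded in the paragraph preceding the statement) that each connected component of $(M_\bR^W)_{\ol{\Sigma}\text{-{\rm gen}}}$ lies in a single GIT chamber for $G$. First I would note that, under the standing hypotheses of this subsection ($X$ quasi-symmetric and $\ol{\Sigma}$ spanning $M_\bR$, i.e.\ $T$ acting with generically finite stabilizers), \autoref{prop:semistable} applies to every $\ell' \in M_\bR^W$, and that $(M_\bR^W)_{\ol{\Sigma}\text{-{\rm gen}}}$ is precisely the set of $\ell' \in M_\bR^W$ satisfying the equivalent conditions of that proposition. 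Since by hypothesis this set is nonempty and is the complement in $M_\bR^W$ of the finite union of the subspaces $H_\alpha \cap M_\bR^W$, each such $H_\alpha \cap M_\bR^W$ is a proper linear subspace, so $(M_\bR^W)_{\ol{\Sigma}\text{-{\rm gen}}}$ is open and dense in $M_\bR^W$. Moreover, for every $\ell' \in (M_\bR^W)_{\ol{\Sigma}\text{-{\rm gen}}}$, condition (1) gives $X^{T\text{-ss}}(\ell') = X^{T\text{-s}}(\ell')$ and the final clause of \autoref{prop:semistable} gives $X^{G\text{-ss}}(\ell') = X^{G\text{-s}}(\ell')$.

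With this in hand the first bullet is immediate: given $\ell \in M_\bR^W$, density lets me pick $\ell' \in (M_\bR^W)_{\ol{\Sigma}\text{-{\rm gen}}}$ arbitrarily close to $\ell$, and then $X^{G\text{-ss}}(\ell') = X^{G\text{-s}}(\ell')$ by the above.

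For the second bullet I would argue as follows. If $X^{G\text{-ss}}(\ell) = X^{G\text{-s}}(\ell)$, then by the Dolgachev--Hu wall-and-chamber decomposition recalled above $\ell$ lies in one of the open $G$-GIT chambers $D \subseteq M_\bR^W$, on which the semistable locus $X^{G\text{-ss}}(-)$ is constant. Because every point of $(M_\bR^W)_{\ol{\Sigma}\text{-{\rm gen}}}$ satisfies $X^{G\text{-ss}} = X^{G\text{-s}}$, no such point lies on a $G$-GIT wall; since the chambers form a disjoint family of open sets, each connected component of $(M_\bR^W)_{\ol{\Sigma}\text{-{\rm gen}}}$, being connected, lies in a single chamber. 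Hence $D \cap (M_\bR^W)_{\ol{\Sigma}\text{-{\rm gen}}}$ is a union of such components, and it is dense in the open set $D$. Choosing $\ell' \in D \cap (M_\bR^W)_{\ol{\Sigma}\text{-{\rm gen}}}$ arbitrarily close to $\ell$, we get $X^{G\text{-ss}}(\ell') = X^{G\text{-ss}}(\ell)$ since $\ell, \ell' \in D$, and $X^{T\text{-ss}}(\ell') = X^{T\text{-s}}(\ell')$ since $\ell'$ is $\ol{\Sigma}$-generic, as required.

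I do not expect a genuine obstacle: the corollary is bookkeeping on top of \autoref{prop:semistable}. The one point worth a sentence of justification — and the only place the reasoning could break — is the claim that perturbing within the chamber $D$ leaves $X^{G\text{-ss}}$ literally unchanged; this rests on the fact that a connected component of the $\ol{\Sigma}$-generic locus cannot straddle a $G$-GIT wall, which is exactly the implication ``$\ell$ generic for $\ol{\Sigma}\Rightarrow X^{G\text{-ss}}(\ell) = X^{G\text{-s}}(\ell)$'' supplied by \autoref{prop:semistable}.
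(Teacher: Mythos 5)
Your argument is correct and is essentially the paper's own: the paper derives the corollary from the paragraph immediately preceding it, which records exactly your chain of observations — \autoref{prop:semistable} gives $X^{G\text{-ss}}=X^{G\text{-s}}$ on the $\ol{\Sigma}$-generic locus, so each connected component of that locus lies in a single Dolgachev--Hu chamber, and the (finitely many) components meeting a given chamber form an open dense subset of it. Your write-up just makes explicit the density of the generic locus (complement of finitely many proper subspaces) and the constancy of $X^{G\text{-ss}}$ on chambers, both of which the paper takes as read.
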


Thus we see that in the quasi-symmetric case when $(M^W_\bR)_{\ol{\Sigma}\text{-{\rm gen}}}\ne \emptyset$, we avoid the pathology of codimension $0$ walls, which occur for more general GIT quotients of linear representations. When there are codimension $0$ walls, there are open regions in $M_\bR^W$ for which $X^{\rm{ss}}(\ell) \neq X^{\rm{s}}(\ell)$.

\subsection{The polytope $\ol{\nabla}$}

Define $\bL = [X^\dual] - [\fg^\dual] \in K_0(\op{Rep}(T))$. For any one-parameter subgroup $\lambda$ of $G$, we define $\bL^{\lambda >0}$ to be the projection of this class onto the subspace spanned by weights which pair positively with $\lambda$. For any cocharacter $\lambda$ we define
\[
\eta_\lambda := \langle \lambda, \bL^{\lambda>0} \rangle.
\]
This agrees with the $\eta$ defined in \cite{HL}. We note also that $\eta_\lambda = \eta_{w \lambda}$ for any $w \in W$, and $\eta_\lambda = \eta_{-\lambda}$ because $X$ is quasi-symmetric.

\begin{defn}
Given a character $\varepsilon \in \op{Pic}(BG)_\bR$ we define the following subsets of $M_\bR$:
\[
\
\nabla_\varepsilon := \left\{ \chi \in M_\bR\, \left|\, -\frac{\eta_\lambda}{2} < \langle\lambda,\chi\rangle \leq \frac{\eta_\lambda}{2}\text{ for all } \lambda \colon \bG_m \to T \text{ such that } \langle\lambda,\varepsilon\rangle > 0 \right. \right\}.
\]
\end{defn}

\begin{rem} \label{rmk:nabla-width}
As a consequence of the symmetry of these defining inequalities around $0$, and the elementary fact that any cocharacter $\lambda$ with $\pair{\lambda}{\varepsilon} \geq 0$ can be approximated rationally by a $\lambda'$ with $\pair{\lambda'}{\varepsilon}>0$, the closure of $\nabla_\varepsilon$, which we denote $\ol{\nabla}$, is independent of $\varepsilon$:
\[
\ol{\nabla} := \left\{ \chi \in M_\bR\, \left|\, -\frac{\eta_\lambda}{2} \le \langle\lambda,\chi\rangle \leq \frac{\eta_\lambda}{2}\text{ for all } \lambda \colon \bG_m \to T\right. \right\}.
\]
\end{rem}

Let $w_0$ be the longest element of $W$.

\begin{lem} \label{lem:dominant}
Suppose that $\lambda$ and $\chi$ are dominant. Then for any $w \in W$,
\[
\pair{\lambda}{\chi} \ge \pair{w\lambda}{\chi} \ge \pair{w_0\lambda}{\chi}.
\]
\end{lem}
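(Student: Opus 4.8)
The plan is to deduce both inequalities from a single classical fact about root systems: for any dominant weight $\mu$ and any $w \in W$, the difference $\mu - w\mu$ lies in the cone $\bigoplus_i \bR_{\ge 0}\,\alpha_i$ spanned by the simple roots $\alpha_i$ of $G$. Granting this, the first inequality is immediate: since the inner product is $W$-invariant and $\alpha_i^\dual$ is a positive multiple of $\alpha_i$ under the identification it induces, the dominance of $\chi$ is exactly the statement that $\pair{\alpha_i}{\chi}\ge 0$ for every simple root, and therefore, writing $\lambda - w\lambda = \sum_i c_i\alpha_i$ with $c_i \ge 0$,
\[
\pair{\lambda}{\chi} - \pair{w\lambda}{\chi} = \pair{\lambda - w\lambda}{\chi} = \sum_i c_i\,\pair{\alpha_i}{\chi} \ge 0 .
\]

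For the second inequality I would apply the same fact to $-w_0\lambda$, which is again dominant because $-w_0$ preserves the dominant chamber $M^+_\bR$. Taking $v = w w_0$ in the classical fact, so that $v(-w_0\lambda) = -w w_0 w_0\lambda = -w\lambda$ (using $w_0^2 = e$), we obtain that
\[
w\lambda - w_0\lambda = (-w_0\lambda) - v(-w_0\lambda) \in \bigoplus_i \bR_{\ge 0}\,\alpha_i ,
\]
and pairing with the dominant weight $\chi$ as above gives $\pair{w\lambda}{\chi} - \pair{w_0\lambda}{\chi} \ge 0$. More conceptually, $\lambda$, $w\lambda$ and $w_0\lambda$ are all extremal weights of the irreducible representation $V(\lambda)$, and every weight $\nu$ of $V(\lambda)$ satisfies $w_0\lambda \preceq \nu \preceq \lambda$ in the root order; the chain $w_0\lambda \preceq w\lambda \preceq \lambda$ is precisely the pair of containments needed.

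It remains to establish the classical fact, which I would prove by induction on the length $\ell(w)$. The case $w = e$ is trivial. For the inductive step write $w = w' s_i$ with $\ell(w) = \ell(w') + 1$, so that $w'\alpha_i$ is a positive root; using $s_i\mu = \mu - \pair{\mu}{\alpha_i^\dual}\alpha_i$ one computes
\[
\mu - w\mu = (\mu - w'\mu) + \pair{\mu}{\alpha_i^\dual}\, w'\alpha_i ,
\]
where the first summand lies in the cone by the inductive hypothesis and the second is a non-negative multiple (since $\mu$ is dominant, $\pair{\mu}{\alpha_i^\dual}\ge 0$) of a positive root, hence also in the cone. The argument is entirely formal; the only point requiring attention — the main, though mild, obstacle — is the bookkeeping around sign conventions, since \S\ref{sec:notation} declares the weights of $\op{Lie}(B)$ to be the \emph{negative} roots. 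Throughout I would use ``positive root'' and ``dominant'' in the sense fixed there (so that $\rho$ is the half-sum of the positive roots and $M^+_\bR$ is the corresponding dominant chamber); nothing substantive changes, as the statement and proof are symmetric under replacing a Weyl chamber by its opposite.
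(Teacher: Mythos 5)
Your proof is correct. The paper takes essentially the same route — it cites \cite[Corollary D.3]{SVdB} for the first inequality (which is exactly the classical fact, that $\mu - w\mu$ lies in the nonnegative span of the simple roots, that you prove by induction on $\ell(w)$) and deduces the second inequality by the same $-w_0$ symmetry trick you use, applied to $\chi$ rather than to $\lambda$.
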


\begin{proof}
A reference for the first inequality is \cite[Corollary D.3]{SVdB}. For the second inequality, first note that $-w_0\chi$ is dominant. So applying the first inequality with $-w_0\chi$ in place of $\chi$, we conclude that $\pair{w\lambda}{w_0\chi} \ge \pair{\lambda}{w_0 \chi}$ for all $w \in W$. Since the pairing is $W$-invariant and $w_0^2 = 1$, this implies the second inequality for any $w \in W$.
\end{proof}

\begin{lem}\label{lem:polytopes}
Assume that $\varepsilon \in \op{Pic}(BG)_\bR$ is generic for $\ol{\Sigma}$. Then $M^+_\bR \cap ( -\rho + \frac{1}{2} \ol{\Sigma}_\varepsilon ) = M^+_\bR \cap \nabla_\varepsilon$. In particular, $\chi \in M^+ \cap (-\rho + \frac{1}{2} \ol{\Sigma}_\varepsilon)$ if and only if the character of $V(\chi)$ lies in $\nabla_\varepsilon$.
\end{lem}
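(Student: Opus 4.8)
The plan is to reduce everything to the torus $T$ and compare two convex bodies via the support-function (Hilbert--Mumford) description of $\nabla_\varepsilon$. First I would recall that for a one-parameter subgroup $\lambda$ of $T$ with $\pair{\lambda}{\varepsilon}>0$, the pairing $\eta_\lambda = \pair{\lambda}{\bL^{\lambda>0}}$ unwinds, using $\bL = [X^\dual]-[\fg^\dual]$ and quasi-symmetry of $X$, into a sum of the form $\sum_{\pair{\lambda}{\beta_i}>0}\pair{\lambda}{\beta_i} - \sum_{\pair{\lambda}{\gamma}>0}\pair{\lambda}{\gamma}$, where the $\gamma$ range over the roots; the root contribution is exactly $-2\pair{\lambda}{\rho}$ when $\lambda$ is dominant (the positive roots pair non-negatively with a dominant $\lambda$), which is the source of the $-\rho$ shift. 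The key identity is that $\tfrac12\eta_\lambda$ equals the value of the support function of $\tfrac12\ol\Sigma$ at $\lambda$ minus $\pair{\lambda}{\rho}$ for dominant $\lambda$: indeed $\max_{v\in\ol\Sigma}\pair{\lambda}{v} = \sum_{\pair{\lambda}{\beta_i}>0}\pair{\lambda}{\beta_i}$ since $\ol\Sigma=\sum_i[0,\beta_i]$. Thus the defining inequality $\pair{\lambda}{\chi}\le \tfrac12\eta_\lambda$ for $\chi=\chi'-\rho$... I should rather phrase it directly: $\chi\in\nabla_\varepsilon$ with $\chi$ dominant iff $\pair{\lambda}{\chi+\rho} \le \max_{v\in\ol\Sigma}\pair{\lambda}{v/2}$ and the symmetric lower bound, for all dominant... no—for all $\lambda$ with $\pair{\lambda}{\varepsilon}>0$, but by $W$-invariance of $\eta$ and Lemma~\ref{lem:dominant} it suffices to check dominant $\lambda$.

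Second, I would identify the right-hand side. The body $\ol\Sigma_\varepsilon = \bigcup_{r>0}\ol\Sigma\cap(r\varepsilon+\ol\Sigma)$ is, by genericity of $\varepsilon$ for $\ol\Sigma$, precisely the part of $\ol\Sigma$ ``cut out on the $\varepsilon$-positive side'': its support function in a direction $\lambda$ with $\pair{\lambda}{\varepsilon}\ge 0$ agrees with that of $\ol\Sigma$, while in directions with $\pair{\lambda}{\varepsilon}<0$ it is smaller. So for $\lambda$ dominant with $\pair{\lambda}{\varepsilon}>0$, $\max_{v\in\ol\Sigma_\varepsilon}\pair{\lambda}{v} = \max_{v\in\ol\Sigma}\pair{\lambda}{v} = \tfrac12\eta_\lambda + \pair{\lambda}{\rho}$ after the factor $\tfrac12$, i.e. the facets of $\tfrac12\ol\Sigma_\varepsilon$ relevant on the dominant side are exactly those of $\tfrac12\ol\Sigma$, and $\chi\in -\rho+\tfrac12\ol\Sigma_\varepsilon$ iff $\pair{\lambda}{\chi+\rho}\le\tfrac12\eta_\lambda$ for all such $\lambda$. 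Combining with the symmetric lower inequality (which comes from $\ol\Sigma=-\ol\Sigma$, or rather from directions $-\lambda$), one gets the containment in both directions on $M^+_\bR$. The two-sided nature is exactly why the half-open condition $-\tfrac{\eta_\lambda}{2}<\pair{\lambda}{\chi}\le\tfrac{\eta_\lambda}{2}$ in the definition of $\nabla_\varepsilon$ (as opposed to the closed $\ol\nabla$) matches the half-open structure of $\ol\Sigma_\varepsilon$: on one side the boundary is included, on the opposite side it is not, because of the asymmetric ``$r>0$, $\pair{\lambda}{\varepsilon}>0$'' conditions. I would need to track this carefully, checking that a character $\chi$ lying on a facet of $-\rho+\tfrac12\ol\Sigma$ contributes iff that facet is on the correct ($\varepsilon$-positive) side.

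Third, for the ``in particular'' clause: given $\chi\in M^+$, I want $\chi\in -\rho+\tfrac12\ol\Sigma_\varepsilon$ iff $\op{Char}(V(\chi))\subset\nabla_\varepsilon$ (equivalently, the convex hull of the $W$-orbit of $\chi$, together with all lower weights, lies in $\nabla_\varepsilon$). Here I would use that $\nabla_\varepsilon$ is defined by conditions that are automatically $W$-stable after intersecting with the constraints, together with Lemma~\ref{lem:dominant}: every weight $\mu$ of $V(\chi)$ satisfies $\pair{\lambda}{\mu}\le\pair{\lambda}{\chi}$ for dominant $\lambda$ (and $\ge\pair{w_0\lambda}{\chi}=\pair{\lambda}{-w_0\chi}\cdot(\text{sign})$...), so $\chi\in\nabla_\varepsilon$ forces the whole weight polytope into $\nabla_\varepsilon$ once we know $\nabla_\varepsilon$ is convex and closed under the relevant orbit operations. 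The main obstacle, I expect, is the bookkeeping around genericity of $\varepsilon$ and the half-open boundary: one must show that no facet of $\ol\Sigma$ gets ``split'' by the $\varepsilon$-cut in a way that ruins the clean support-function comparison, and that the strict/non-strict inequalities line up exactly. The representation-theoretic facts (description of $\ol\Sigma$ as a zonotope, the $-\rho$ shift from roots, Lemma~\ref{lem:dominant}) are all in hand; the convex-geometry matching of $\tfrac12\ol\Sigma_\varepsilon$ with $\nabla_\varepsilon$ on the dominant chamber is the real content.
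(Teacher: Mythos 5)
Your plan is essentially the paper's proof: the identity $\tfrac{\eta_\lambda}{2}=\tfrac12\max_{\mu\in\ol\Sigma}\pair{\lambda}{\mu}-\pair{\lambda}{\rho}$ for dominant $\lambda$, the reduction to dominant directions via \autoref{lem:dominant} and $W$-invariance of $\ol\Sigma$ and $\eta$, the use of genericity of $\varepsilon$ to guarantee that facet normals of $\ol\Sigma$ pair nontrivially with $\varepsilon$, and the convexity and $W$-invariance of $\nabla_\varepsilon$ for the ``in particular'' clause are exactly the ingredients the paper uses. The one step you defer is where the paper spends most of its effort --- the inclusion $M^+_\bR\cap\nabla_\varepsilon\subseteq -\rho+\tfrac12\ol\Sigma_\varepsilon$, carried out there by minimizing $r$ subject to $\chi-t\varepsilon\in-\rho+r\ol\Sigma$ and then $W$-translating the supporting facet normal to a dominant one --- but your framework already contains the tools needed to execute it.
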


\begin{proof}
If $\lambda$ is dominant, and hence $w_0 \lambda$ is anti-dominant, then 
\begin{align*}
\frac{\eta_\lambda}{2} &= \frac{1}{2} \langle \lambda, [(X^\dual)^{\lambda>0}] - [(\fg^\dual)^{\lambda>0}]\rangle = \frac{1}{2} \max \{\langle \lambda,\mu \rangle \mid \mu \in \ol{\Sigma} \} - \langle \lambda,\rho\rangle, \text{ and}\\
\frac{\eta_{w_0 \lambda}}{2} &= \frac{1}{2} \max \{\pair{w_0 \lambda}{\mu} \mid \mu \in \ol{\Sigma} \} + \pair{w_0 \lambda}{\rho}.
\end{align*}
Now suppose, in addition, that $\chi \in M^+_\bR \cap (-\rho + \frac{1}{2} \ol{\Sigma}_\varepsilon)$. If $\pair{\lambda}{\varepsilon} >0$, then
\begin{align*}
\pair{\lambda}{\chi} &\leq \frac{1}{2} \max \{ \pair{\lambda}{\mu} \mid \mu \in \ol{\Sigma} \} - \pair{\lambda}{\rho} = \frac{\eta_\lambda}{2}, \text{ and} \\
\pair{w_0 \lambda}{\chi} &> \frac{1}{2} \min \{ \pair{w_0 \lambda}{\mu} \mid \mu \in \ol{\Sigma} \} - \pair{w_0\lambda}{\rho}\\
&= - \frac{1}{2} \max \{\pair{w_0 \lambda}{\mu} \mid \mu \in \ol{\Sigma} \} + \pair{w_0\lambda}{\rho} = -\frac{\eta_{w_0\lambda}}{2},
\end{align*}
where we have used that $X$ is quasi-symmetric, and strict inequality on the second line uses that $\pair{w_0 \lambda}{\varepsilon} > 0$ and that $\chi \in -\rho + t \varepsilon + \frac{1}{2} \ol{\Sigma}$ for some $t>0$.

Since $\eta_{w\lambda} = \eta_\lambda$ for all $w \in W$, and every weight is in the $W$-orbit of a dominant weight, we can combine the previous inequalities with \autoref{lem:dominant} to conclude that $\frac{\eta_\lambda}{2} \ge \pair{\lambda}{\chi} > -\frac{\eta_{\lambda}}{2}$ for all weights $\lambda$ such that $\pair{\lambda}{\varepsilon} > 0$, which means that $\chi \in M^+_\bR \cap \nabla_\varepsilon$.

\medskip

Conversely, suppose that $\chi \in M^+_\bR \cap \nabla_\varepsilon$. By definition of $\nabla_\varepsilon$, there exists $t>0$ such that $\chi - t\varepsilon \in \nabla_\varepsilon$. Define
\[
r_0 = \min \{ r \ge 0 \mid \chi - t\varepsilon \in -\rho + r_0 \ol{\Sigma} \}.
\]
We will show that $\frac{1}{2} \ge r_0$. First, there exists $\lambda$ such that for all $\mu \in -\rho + r_0 \ol{\Sigma}$, we have $\pair{\lambda}{\chi - t\varepsilon} \ge \pair{\lambda}{\mu}$. Equivalently, $\pair{\lambda}{\chi - t\varepsilon + \rho} \ge \pair{\lambda}{\mu + \rho}$ for all such $\mu$. Since $\chi - t\varepsilon + \rho$ is dominant, if we choose $w \in W$ so that $w\lambda$ is dominant, then $\pair{w\lambda}{\chi - t\varepsilon + \rho} \ge \pair{\lambda}{\chi - t\varepsilon +\rho}$ (\autoref{lem:dominant}). Hence, replacing $\lambda$ by $w\lambda$ and using that $\ol{\Sigma}$ is $W$-invariant, we conclude that there exists a dominant weight $\lambda$ such that
\[
\pair{\lambda}{\chi - t\varepsilon +\rho} \ge r_0 \max \{\pair{\lambda}{\nu} \mid \nu \in \ol{\Sigma} \}.
\]
Furthermore, we may choose $\lambda$ to be a linear functional that is constant on a facet of $-\rho + r_0 \ol{\Sigma}$, and hence by \autoref{prop:semistable} we have $\langle \lambda, \varepsilon \rangle \ne 0$. 

Since $\eta_{-\lambda} = \eta_\lambda$ and $\chi - t\varepsilon \in \nabla_\varepsilon$, then we have, by definition of $\nabla_\varepsilon$, that $\frac{\eta_\lambda}{2} \ge \pair{\lambda}{\chi - t\varepsilon}$. Since $\lambda$ is dominant, then as above, we have $\frac{\eta_\lambda}{2} = \frac{1}{2} \max \{ \pair{\lambda}{\nu} \mid \nu \in \ol{\Sigma} \} - \pair{\lambda}{\rho}$. Combining this with the above inequality, we conclude that
\[
\frac{1}{2} \max \{ \pair{\lambda}{\nu} \mid \nu \in \ol{\Sigma} \} \ge \pair{\lambda}{\chi - t\varepsilon +\rho} \ge r_0 \max \{\pair{\lambda}{\nu} \mid \nu \in \ol{\Sigma} \}.
\]
Since $\varepsilon$ is in the linear span of $\ol{\Sigma}$ and $\pair{\lambda}{\varepsilon} \ne 0$, we know that $\pair{\lambda}{\nu} \ne 0$ for some $\nu \in \ol{\Sigma}$. Finally, by quasi-symmetry, $\nu \in \ol{\Sigma}$ implies that some negative multiple of $\nu$ is also in $\ol{\Sigma}$, so $\max\{\pair{\lambda}{\nu} \mid \nu \in \ol{\Sigma}\} > 0$. We conclude that $\frac{1}{2} \ge r_0$, as desired. This implies that $\chi \in -\rho + t\varepsilon + \frac{1}{2} \ol{\Sigma}$, and in particular, $\chi \in -\rho + \frac{1}{2} \ol{\Sigma}_{\varepsilon}$.
\end{proof}

\begin{cor} \label{cor:parallel-facets}
Assume that there exists $\varepsilon \in \op{Pic}(BG)_\bR$ which is generic for $\ol{\Sigma}$. Then each facet of $\ol{\nabla}$ is parallel to some facet of $\ol{\Sigma}$.
\end{cor}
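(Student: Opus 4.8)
The plan is to first obtain a clean global description of $\ol{\nabla}$ and then reduce the statement to an elementary fact about facets of an intersection of polytopes. Note at the outset that both $\ol{\Sigma}$ and $\ol{\nabla}$ are $W$-invariant: the multiset $\{\beta_i\}$ of $T$-weights of $X^\dual$ is stable under $W$, so $\ol{\Sigma} = \sum_i [0,\beta_i]$ is $W$-invariant, and the inequalities defining $\ol{\nabla}$ in \autoref{rmk:nabla-width} are $W$-invariant since $\eta_{w\lambda} = \eta_\lambda$.

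The key step is to prove that
\[
\ol{\nabla} \;=\; \bigcap_{w \in W}\Bigl(-w\rho + \tfrac12\ol{\Sigma}\Bigr).
\]
The computation in the proof of \autoref{lem:polytopes} shows that for dominant $\lambda$ one has $\tfrac{\eta_\lambda}{2} = \tfrac12\max_{\mu \in \ol{\Sigma}}\pair{\lambda}{\mu} - \pair{\lambda}{\rho}$. For an arbitrary cocharacter $\lambda$, write $\lambda = w\lambda^+$ with $\lambda^+$ dominant; using $\eta_\lambda = \eta_{\lambda^+}$ and the $W$-invariance of $\ol{\Sigma}$ this becomes $\tfrac{\eta_\lambda}{2} = \tfrac12\max_{\mu\in\ol{\Sigma}}\pair{\lambda}{\mu} - \pair{\lambda^+}{\rho}$, and since $\rho$ is dominant, \autoref{lem:dominant} gives $\pair{\lambda^+}{\rho} = \max_{w\in W}\pair{\lambda}{w\rho}$ (both sides equal $\max_{\nu \in W\lambda}\pair{\nu}{\rho}$). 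Hence $\chi \in \ol{\nabla}$ if and only if $\pair{\lambda}{\chi} + \max_{w}\pair{\lambda}{w\rho} = \max_{w}\pair{\lambda}{\chi + w\rho} \le \tfrac12\max_{\mu\in\ol{\Sigma}}\pair{\lambda}{\mu}$ for every cocharacter $\lambda$ (the two-sided bound in the definition of $\ol{\nabla}$ being automatic after replacing $\lambda$ by $-\lambda$, as noted in \autoref{rmk:nabla-width}); that is, if and only if $\pair{\lambda}{\chi + w\rho} \le \tfrac12\max_{\mu\in\ol{\Sigma}}\pair{\lambda}{\mu}$ for all $\lambda$ and all $w$; that is, if and only if $\chi + w\rho \in \tfrac12\ol{\Sigma}$ for all $w$, using in the last step that the rational polytope $\tfrac12\ol{\Sigma}$ is the intersection of the half-spaces $\{x : \pair{\lambda}{x} \le \tfrac12\max_{\mu\in\ol{\Sigma}}\pair{\lambda}{\mu}\}$ over all cocharacters $\lambda$.

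Granting this identity, the rest is routine. Since $\ol{\Sigma}$ is $W$-invariant, $-w\rho + \tfrac12\ol{\Sigma} = w\bigl(-\rho + \tfrac12\ol{\Sigma}\bigr)$ is a translate of the single polytope $\tfrac12\ol{\Sigma}$, so $\ol{\nabla}$ is a finite intersection of translates of $\tfrac12\ol{\Sigma}$. We may assume $\ol{\nabla}$ is full-dimensional, since otherwise it has no facets and there is nothing to prove; then each $-w\rho + \tfrac12\ol{\Sigma} \supseteq \ol{\nabla}$ is full-dimensional too. Now apply the standard fact that if a full-dimensional polytope is a finite intersection $P = \bigcap_j P_j$ of full-dimensional polytopes, then the affine hull of every facet of $P$ equals the affine hull of some facet of one of the $P_j$: a point $x$ in the relative interior of a facet $F$ of $P$ lies on the boundary of some $P_j$, hence in the relative interior of a proper face $G$ of $P_j$, and since $x \in \op{relint}(F)$ with $F \subseteq P \subseteq P_j$ the face $G$ must contain all of $F$, forcing $\dim G = \dim F$ and $\op{aff}(F) = \op{aff}(G)$. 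The facet hyperplanes of $-w\rho + \tfrac12\ol{\Sigma}$ are exactly the translates of those of $\ol{\Sigma}$, so every facet of $\ol{\nabla}$ is parallel to a facet of $\ol{\Sigma}$, as claimed.

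The main obstacle is the global identity for $\ol{\nabla}$ displayed above: \autoref{lem:polytopes} only compares $\ol{\nabla}$ with $-\rho + \tfrac12\ol{\Sigma}$ inside the dominant chamber, and it does so via the truncated region $\ol{\Sigma}_\varepsilon$ rather than $\ol{\Sigma}$ itself, so the support-function computation has to be rerun from the definition of $\eta_\lambda$, with care taken in passing from dominant to arbitrary $\lambda$ and in handling the term $\max_{w}\pair{\lambda}{w\rho}$ via \autoref{lem:dominant}. Everything else is soft convex geometry.
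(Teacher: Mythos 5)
Your proof is correct, but it takes a genuinely different route from the paper's. The paper works inside the dominant chamber: taking closures in \autoref{lem:polytopes} gives $M_\bR^+ \cap \ol{\nabla} = M_\bR^+ \cap (-\rho + \frac{1}{2}\ol{\Sigma})$, so $\ol{\nabla}$ is the $W$-orbit of that piece, and one then has to classify the facets of $M_\bR^+ \cap (-\rho + \frac{1}{2}\ol{\Sigma})$ as either parallel to facets of $\ol{\Sigma}$ or contained in walls of $M_\bR^+$, ruling out the latter by a convexity argument. You instead prove the global identity $\ol{\nabla} = \bigcap_{w\in W} w\bigl(-\rho + \frac{1}{2}\ol{\Sigma}\bigr)$ by rerunning the support-function computation for $\eta_\lambda$ from its definition (your handling of $\max_w \pair{\lambda}{w\rho}$ via \autoref{lem:dominant} and the reduction of the two-sided bound to a one-sided one via $\eta_{-\lambda}=\eta_\lambda$ are both correct, as is the passage from rational to all $\lambda$ since $\ol{\Sigma}$ is a rational polytope), and then invoke the standard fact that every facet hyperplane of a full-dimensional intersection of polytopes is a facet hyperplane of one of the factors. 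This buys two things: the chamber walls never enter, so no separate argument is needed to exclude them, and your derivation of the identity nowhere uses the existence of an $\varepsilon$ generic for $\ol{\Sigma}$, whereas the paper's route through \autoref{lem:polytopes} does; your intersection formula is also a clean global description of $\ol{\nabla}$ of independent interest (compare \autoref{rmk:nabla=sigma}). The only loose end is your dismissal of the non-full-dimensional case as having "no facets," which conflates facets of $\ol{\nabla}$ with codimension-one faces of $M_\bR$; the paper's proof is equally cavalier on this point, and in the regime where the corollary is used ($\ol{\Sigma}$ spanning $M_\bR$) it is harmless.
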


\begin{proof}
Taking closures in \autoref{lem:polytopes} implies that $M_\bR^+ \cap \ol{\nabla} = M_\bR^+ \cap (-\rho + \frac{1}{2} \ol{\Sigma})$. In particular, this implies that $\ol{\nabla}$ is the $W$-orbit of $M_\bR^+ \cap (-\rho + \frac{1}{2} \ol{\Sigma})$. Each facet of $M_\bR^+ \cap (-\rho + \frac{1}{2} \ol{\Sigma})$ is either parallel to a facet of $\ol{\Sigma}$, or is contained in a facet of $M_\bR^+$. In particular, each facet of $\ol{\nabla}$ is a $W$-translate of one of these two types of facets. Since $\ol{\nabla}$ is $W$-invariant, if it contains any facet of $M_\bR^+$, then it contains all of them; however, we also know that $\ol{\nabla}$ is convex, so it cannot contain all of them. In particular, since $\ol{\Sigma}$ is also $W$-invariant, every facet of $\ol{\nabla}$ is parallel to some facet of $\ol{\Sigma}$.
\end{proof}

\section{The main theorem}

Given a quasi-symmetric representation $X$ of a reductive group $G$, our main theorem will identify the derived category of coherent sheaves on the GIT quotient ${\rm D}^b(X^{\rm{ss}}(\ell)/G)$ with a full subcategory of the equivariant derived category ${\rm D}^b(X/G)$, provided $\ell$ satisfies the genericity conditions discussed in the previous section.

\begin{defn}
For any region $\Omega \subset M_\bR$, let $\cM(\Omega) \subset {\rm D}^b(X/G)$ be the full subcategory which is (split) generated by objects of the form $\cO_X \otimes U$ for those $U \in \Rep(G)$ whose character is contained in $\Omega$.
\end{defn}

As above we shall denote by $\ol{\Sigma}$ the zonotope determined by the character of $X$.
\begin{thm} \label{thm:magic_windows}
Let $\delta, \ell \in \op{Pic}(BG)_\bR$ be characters such that $\partial(\delta + \ol{\nabla}) \cap M = \emptyset$. Then the restriction functor induces a fully faithful functor
\[
\op{res}_{X^{\rm{ss}}(\ell)} \colon \cM(\delta + \ol{\nabla}) \to {\rm D}^b(X^{\rm{ss}}(\ell)/G).
\]
If $(M_\bR^W)_{\ol{\Sigma}\text{-{\rm gen}}} \neq \emptyset$, then this is an equivalence whenever $X^{\rm{ss}}(\ell) = X^{\rm{s}}(\ell)$.\footnote{Note that if $X^{\rm{ss}}(\ell) = X^{\rm{s}}(\ell)$, then $X^{\rm{ss}}(\ell)/G$ is Deligne--Mumford. It follows that $X$ has finite generic stabilizer under the action of $T$, and hence $\ol{\Sigma}$ spans $M_\bR$.}
\end{thm}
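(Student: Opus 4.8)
The plan is to prove full faithfulness first --- it holds for every $\ell$ --- and then essential surjectivity, which will use the extra hypotheses. For full faithfulness I would reduce, as usual, to the generators: it suffices that for irreducible $U,U'$ with characters in $\delta+\ol\nabla$ the restriction map $\RHom_{X/G}(\cO_X\otimes U,\cO_X\otimes U')\to\RHom_{X^{\rm ss}(\ell)/G}(\cO_X\otimes U,\cO_X\otimes U')$ is an isomorphism. Here I would invoke the window machinery of \cite{HL,BFK}: associated to the Kempf--Ness stratification of $X\setminus X^{\rm ss}(\ell)$ --- with strata $S_i$, destabilizing one-parameter subgroups $\lambda_i$, centres $Z_i$, and numerical invariants $\eta_i=\eta_{\lambda_i}$ --- there is, for each choice of integers $\mathbf{w}=(w_i)$, a window subcategory $\cG_{\mathbf{w}}\subset{\rm D}^b(X/G)$ on which restriction to $X^{\rm ss}(\ell)/G$ is an equivalence, characterised by the condition that the $\lambda_i$-weights of $L\sigma_i^{\ast}(-)$ lie in a half-open interval of width $\eta_i$ determined by $w_i$. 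For $F=\cO_X\otimes U$ locally free, $L\sigma_i^{\ast}F=\cO_{Z_i}\otimes U$, and since $\cO_{X^{\lambda_i}}$ has $\lambda_i$-weight $0$ the relevant weights are exactly the numbers $\langle\lambda_i,\mu\rangle$ for $\mu$ a weight of $U$. By the defining inequalities of $\ol\nabla$ these lie in the \emph{closed} interval $\langle\lambda_i,\delta\rangle+[-\eta_i/2,\eta_i/2]$, of width exactly $\eta_i$; the hypothesis $\partial(\delta+\ol\nabla)\cap M=\emptyset$ forces them into the corresponding \emph{open} interval, which therefore fits inside a half-open window of width $\eta_i$ with integral endpoint. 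Hence $\cO_X\otimes U\in\cG_{\mathbf w}$ for a suitable common $\mathbf w$, so $\cM(\delta+\ol\nabla)\subseteq\cG_{\mathbf w}$, and full faithfulness is inherited from the equivalence $\cG_{\mathbf w}\xrightarrow{\sim}{\rm D}^b(X^{\rm ss}(\ell)/G)$.

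For essential surjectivity, assume in addition $(M^W_\bR)_{\ol{\Sigma}\text{-gen}}\ne\emptyset$ and $X^{\rm ss}(\ell)=X^{\rm s}(\ell)$, so that $\ol\Sigma$ spans $M_\bR$ and \autoref{prop:semistable} applies. Using \autoref{cor:perturbation} I may first move $\ell$ to a nearby character with $X^{\rm ss}(\ell)$ unchanged but $X^{T\text{-ss}}(\ell)=X^{T\text{-s}}(\ell)$, which affects neither source nor target. Since restriction ${\rm D}^b(X/G)\to{\rm D}^b(X^{\rm ss}(\ell)/G)$ is essentially surjective and the source is split-generated by $\{\cO_X\otimes U : U\in\Irr(G)\}$, it suffices to show that each $\cO_X\otimes U$, restricted to $X^{\rm ss}(\ell)/G$, lies in the thick subcategory generated by $\cM(\delta+\ol\nabla)$. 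Equivalently, I want to show that the thick subcategory of ${\rm D}^b(X/G)$ generated by $\cM(\delta+\ol\nabla)$ together with all objects set-theoretically supported on the unstable locus $X\setminus X^{\rm ss}(\ell)$ is all of ${\rm D}^b(X/G)$, for then restriction to $X^{\rm ss}(\ell)/G$ kills the second family.

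To produce this I would argue by induction, building $\cO_X\otimes V(\chi)$ from window generators modulo unstable-supported objects. Given $\chi$ not in $\delta+\ol\nabla$, \autoref{cor:parallel-facets} says the violated facet of $\delta+\ol\nabla$ is parallel to a facet of $\ol\Sigma$, and \autoref{prop:semistable} lets one choose a one-parameter subgroup $\mu$ normal to it with $\langle\mu,\ell\rangle>0$; the corresponding Bialynicki--Birula attracting subspace $Y_\mu\subset X\setminus X^{\rm ss}(\ell)$ is cut out by a subrepresentation $N^{\dual}_\mu\subset X^{\dual}$, and the Koszul complex resolving $\cO_{Y_\mu}\otimes V(\chi)$ exhibits $\cO_X\otimes V(\chi)$ --- up to objects supported on $Y_\mu$, which vanish on $X^{\rm ss}(\ell)$ --- as an iterated extension of the bundles $\cO_X\otimes(\textstyle\bigwedge^{j}N^{\dual}_\mu\otimes V(\chi))$ for $j\ge 1$. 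Decomposing these into irreducibles and iterating, one drives $V(\chi)$ toward the window; the precise shape of $\ol\nabla$ (the content of \autoref{lem:polytopes} and \autoref{cor:parallel-facets}) is exactly what guarantees that the translates introduced by the weights of $N^{\dual}_\mu$ do not overshoot and that the process terminates inside $\delta+\ol\nabla$. This is the generation argument of \cite{SVdB}, adapted to the present geometric setting; to keep the bookkeeping tractable one first reduces to the torus case $G=T$ --- where $\Irr(T)=M$, $X^{T\text{-ss}}(\ell)/T$ is a smooth toric Deligne--Mumford stack, and the combinatorics is cleanest --- and then descends to $G$ along $X^{\rm ss}(\ell)/B\to X^{\rm ss}(\ell)/G$ using Borel--Weil--Bott and the $W$-invariance of $\ol\nabla$.

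I expect the essential-surjectivity induction to be the main obstacle: exhibiting a potential function on weights that strictly decreases under the Koszul reduction --- and in particular controlling the interplay between the destabilizing subgroups $\mu$ available for $\ell$ and the facets of $\ol\nabla$, so that one genuinely makes progress rather than merely cycling --- is delicate, and it is here that the combinatorics of the zonotope $\ol\Sigma$ and the machinery of \cite{SVdB} do the real work. Full faithfulness, by contrast, is a comparatively direct application of the window theorem once the width computation for $\ol\nabla$ is in place.
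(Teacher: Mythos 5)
Your full-faithfulness argument is essentially the paper's: both place $\cM(\delta+\ol\nabla)$ inside a window category $\cG^w$ of \cite{HL} by computing that the width of $\ol\nabla$ in the $\lambda_i$-codirection is exactly $\eta_{\lambda_i}$, and both use $\partial(\delta+\ol\nabla)\cap M=\emptyset$ to squeeze the closed interval of $\lambda_i$-weights into a half-open one. (The paper routes this through the half-open polytope $\nabla_\varepsilon$ and the perturbed choice $w_i=\pair{\lambda_i}{\delta}-\eta_{\lambda_i}/2+a$, but the content is the same.) This half is complete.

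The essential-surjectivity half has a genuine gap, which you yourself flag: the strictly decreasing potential function and the termination of the Koszul reduction are exactly where the theorem lives, and they are not supplied by a citation to \cite{SVdB}, whose goal (finite global dimension of a noncommutative resolution) differs from generation over $X^{\rm{ss}}(\ell)$. Three points in your sketch do not survive contact with the actual argument. First, the induction is not run against the facets of $\delta+\ol\nabla$ but against the scaled zonotopes $-\rho+\delta+r\ol\Sigma$: the potential is the lexicographic pair $(r_\chi,p_\chi)$, where $r_\chi$ is the smallest $r$ with $\chi\in-\rho+\delta+r\ol\Sigma$ and $p_\chi$ counts coefficients equal to $-r_\chi$ in an optimal expression $\chi=-\rho+\sum_i a_i\beta_i+\delta$, and the strict decrease is a nontrivial redistribution lemma (\autoref{lem:alpha-ineq}) that uses quasi-symmetry line by line in $M_\bR$. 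Second, \autoref{prop:semistable} only guarantees that the supporting functional $\lambda$ of the relevant facet satisfies $\pair{\lambda}{\ell}\ne 0$, not $\pair{\lambda}{\ell}>0$ as you assert; the paper consequently needs two families of complexes, $C_{\lambda,\chi}={\rm R}\pi_*(\cO_{\cS(\lambda)}\otimes\cL(\chi))$ when $\pair{\lambda}{\ell}>0$ and the dual complex $D^\vee_{\lambda,\chi}$ built from $-w_0\lambda$ and $-w_0\chi$ when $\pair{\lambda}{\ell}<0$, which translate $\chi$ by the $\beta_i$ in opposite directions. Third, the paper does not reduce to $G=T$ and descend: it reduces only to $G$ connected and works with $G$ throughout, the Borel--Weil--Bott bookkeeping being built into $C_{\lambda,\chi}$ and $D_{\lambda,\chi}$ via the relative Koszul resolution over $G/B\times X$; a literal torus-first-then-descend argument would still require matching the $T$-window with the $G$-window and controlling the $*$-action of $W$ on the weights that appear, which is what \autoref{lem:rp-chi}(a) and \autoref{lem:BWB-dual} accomplish. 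So your outline points at the right circle of ideas, but the combinatorial heart of the proof is missing.
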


We prove this theorem in the remainder of the section.

\begin{lem} \label{lem:hyperplane}
There exists a locally finite periodic hyperplane arrangement in $M_\bR$, such that if $\delta$ is outside of the hyperplanes, then $\partial(\delta + \ol{\nabla}) \cap M = \emptyset$.
\end{lem}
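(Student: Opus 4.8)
The plan is to write the arrangement down explicitly by chasing the facets of $\ol{\nabla}$. The one structural fact I need is that $\ol{\nabla}$ is a rational polyhedron with finitely many facets, each spanning a rational affine hyperplane. This is not literally the definition, which involves one inequality $\pair{\lambda}{\chi} \le \eta_\lambda/2$ for every $\lambda \in N$ (using $\eta_\lambda = \eta_{-\lambda}$ to absorb the two-sided bounds into one-sided ones). But $\lambda \mapsto \eta_\lambda$ is the difference of the support functions of two rational zonotopes -- the one determined by the character of $X^{\dual}$ (which is $\ol{\Sigma}$ itself) and the zonotope $\sum_{\alpha>0}[-\alpha,\alpha]$ determined by the roots of $G$ -- hence is linear on each cone of a finite rational fan in $N_\bR$. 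On such a maximal cone $C$ one may write $\tfrac{1}{2}\eta_\lambda = \pair{\lambda}{v_C}$ for a fixed $v_C \in M_{\bQ}$, so the inequalities indexed by $\lambda \in C\cap N$ collapse to the single condition $\chi - v_C \in C^{\vee}$. Thus $\ol{\nabla}$ is a finite intersection of rational shifted dual cones, so it is a rational polyhedron and all of its facet hyperplanes are rational. (If $\ol{\nabla}$ fails to be full-dimensional one runs the same argument inside its affine hull, which is again rational; by \autoref{cor:parallel-facets} and the footnote to \autoref{thm:magic_windows} this degenerate case does not arise in the situations of interest.)

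Granting this, list the facets $F_1,\dots,F_k$ of $\ol{\nabla}$ and, for each $j$, choose a primitive cocharacter $\lambda_j \in N$ and a rational number $c_j$ with $F_j \subset \{\chi \in M_\bR : \pair{\lambda_j}{\chi} = c_j\}$ and $\ol{\nabla} \subset \{\chi : \pair{\lambda_j}{\chi} \le c_j\}$. Since the (relative) boundary of a polyhedron is the union of its proper faces and every proper face lies in a facet hyperplane, translation gives
\[
\partial(\delta + \ol{\nabla}) \;=\; \delta + \partial\ol{\nabla} \;\subset\; \bigcup_{j=1}^{k}\{\chi \in M_\bR : \pair{\lambda_j}{\chi} = c_j + \pair{\lambda_j}{\delta}\}.
\]
Consequently, if some $m \in M$ lies in $\partial(\delta + \ol{\nabla})$, then $\pair{\lambda_j}{\delta} = \pair{\lambda_j}{m} - c_j$ for some $j$; and because $\lambda_j$ is primitive, $\pair{\lambda_j}{m}$ runs through all of $\bZ$ as $m$ runs through $M$, so $\delta$ must lie on one of the affine hyperplanes $\{\pair{\lambda_j}{\cdot} = n - c_j\}$ with $1 \le j \le k$ and $n \in \bZ$.

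This pins down the arrangement: set
\[
\mathcal{A} \;=\; \bigcup_{j=1}^{k}\ \bigcup_{n\in\bZ}\ \{\chi \in M_\bR : \pair{\lambda_j}{\chi} = n - c_j\},
\]
so that $\partial(\delta + \ol{\nabla}) \cap M = \emptyset$ whenever $\delta \notin \mathcal{A}$, by the previous paragraph. It remains to check the two adjectives. Local finiteness: on any bounded subset of $M_\bR$ each functional $\pair{\lambda_j}{\cdot}$ is bounded, so only finitely many $n$ contribute a hyperplane meeting that subset, and $k < \infty$. Periodicity: translating $\delta$ by $m' \in M$ changes $\pair{\lambda_j}{\delta}$ by the integer $\pair{\lambda_j}{m'}$, which merely reindexes $n$, so $\mathcal{A}$ is invariant under the translation action of $M$ (a fortiori of $M^W$).

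The only genuine content is the first step -- recognizing that the a priori infinite defining system of $\ol{\nabla}$ cuts out a rational polyhedron with rational facet hyperplanes; everything after it is the elementary observation that a lattice point on a translated rational hyperplane forces $\pair{\lambda_j}{\delta}$ into a single coset of $\bZ$. I expect that first step to be the main obstacle to make fully rigorous in general, though under the standing genericity hypothesis $(M_\bR^W)_{\ol{\Sigma}\text{-}\mathrm{gen}} \neq \emptyset$ it can be bypassed: \autoref{cor:parallel-facets} already identifies the facet directions of $\ol{\nabla}$ with those of the manifestly rational zonotope $\ol{\Sigma}$, reducing the lemma to the bookkeeping of the last two paragraphs.
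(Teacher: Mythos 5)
Your proof is correct and follows essentially the same route as the paper's: the arrangement is exactly the set of $M$-translates of the hyperplanes spanned by the facets of $\ol{\nabla}$, with periodicity built in and local finiteness coming from the discreteness of $M$. The only difference is that you additionally justify that $\ol{\nabla}$ is a rational polytope with finitely many facets (via piecewise-linearity of $\lambda\mapsto\eta_\lambda$ on a finite rational fan), a fact the paper's proof takes for granted; this is a legitimate bonus rather than a divergence.
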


\begin{proof}
Given a hyperplane $H$, we have $(\delta + H) \cap M \ne \emptyset$ if and only if $\delta \in M-H$. So if $H_1, \dots, H_n$ are the hyperplanes such that $H_i \cap \ol{\nabla}$ is a facet, then we take our hyperplane arrangement to be the set of $m - H_i$ where $m \in M$. This is clearly periodic. Finally, since $M$ is a lattice, there exists $\varepsilon > 0$ such that any two points are at distance at least $\varepsilon$. So our arrangement is also locally finite.
\end{proof}

\begin{rem} \label{rem:alternate_main_thm}
Assuming the existence of an $\varepsilon \in (M_\bR^W)_{\ol{\Sigma}\text{-{\rm gen}}}$, the claim of \autoref{thm:magic_windows} is equivalent to the claim that for any $\delta \in M_\bR^W$, the restriction functor $\cM(\delta + \nabla_\varepsilon) \to {\rm D}^b(X^{\rm{ss}}(\ell)/G)$ is fully faithful and is an equivalence when $X^{\rm{ss}}(\ell)=X^{\rm{s}}(\ell)$.
\end{rem}

\begin{proof} [Proof of Remark]
Note that for any region $\Omega \subset M_\bR$, the category $\cM(\Omega)$ only depends on the intersection $\Omega \cap M$. Now if $\partial(\delta + \ol{\nabla}) \cap M = \emptyset$, then $(\delta + \ol{\nabla}) \cap M = (\delta + \nabla_\varepsilon) \cap M$, so $\cM(\delta+\ol{\nabla}) = \cM(\delta + \nabla_\varepsilon)$. Conversely, let $\delta$ be arbitrary and let $\varepsilon$ be generic for $\ol{\Sigma}$. Then $\varepsilon$ is generic for $\ol{\nabla}$, whose facets are parallel to those of $\ol{\Sigma}$ by \autoref{cor:parallel-facets}. In this case, $(\delta + \nabla_\varepsilon) \cap M = (\delta + r \varepsilon + \ol{\nabla}) \cap M$ for sufficiently small $r>0$, so $\cM(\delta + \nabla_\varepsilon) = \cM(\delta + r \varepsilon + \ol{\nabla})$ and $\delta + r\varepsilon$ satisfies the genericity hypothesis $\partial (\delta + r \varepsilon + \ol{\nabla}) \cap M = \emptyset$.
\end{proof}

\subsection{Magic windows}
\label{sect:magic_windows}

In geometric invariant theory, after fixing a linearization $\ell \in \op{Pic}(BG)_\bR$ and a Weyl-invariant inner product on $N$, one has a canonical sequence of locally closed subvarieties $Z_0^{\rm{ss}},\ldots,Z_n^{\rm{ss}} \subset X$ along with canonical one-parameter subgroups $\lambda_0, \ldots, \lambda_n$ such that $\lambda_i$ fixes $Z_i^{\rm{ss}}$ pointwise and is ``maximally destabilizing'' for those points.

In \cite[Theorem 2.10]{HL}, it is shown that for any choice of integers $w = (w_0,\ldots,w_n)$ the restriction functor $\cG^w \to {\rm D}^b(X^{\rm{ss}}(\ell)/G)$ is an equivalence, where by definition
\begin{equation} \label{eqn:Gw}
\cG^w := \left\{ F \in {\rm D}^b(X/G)\, \left|\, F|_{Z^{\rm{ss}}_i} \text{ has }\lambda_i \text{-weights in } [w_i,w_i+\eta_i) \text{ for all $i$} \right. \right\},
\end{equation}
and $\eta_i$ is the total $\lambda_i$-weight of $({\rm N}_{S_i}^\dual X)|_{Z_i}$. In fact, $\eta_i = \eta_{\lambda_i}$ in the notation of \S\ref{sect:zonotope}. Observe that the definition of $\cG^w$ makes sense as written for $w_i \in \bR$, and due to the half-open intervals of integer length in the definition of $\cG^w$ we have $\cG^w = \cG^{\lceil w \rceil}$ where $\lceil w \rceil = (\lceil w_0 \rceil,\ldots, \lceil w_n \rceil)$. It follows that \cite[Theorem 2.10]{HL} applies to $\cG^w$ for real $w$ as well.

\begin{lem} \label{lem:fully_faithful}
Let $\delta,\varepsilon \in M_\bR$ and let $\ell \in \op{Pic}(BG)_\bR = M_\bR^W$ be characters such that $\varepsilon$ is generic for $\ol{\nabla}$.\footnote{The same is true, with a slightly simpler proof, if instead we require $\varepsilon$ to be generic in the sense that $\pair{\lambda_i}{\varepsilon} \neq 0$ for all $i$.} Then there is a choice of $w= (w_0,\ldots,w_n)$ such that $\cM(\delta + \nabla_\varepsilon) \subset \cG^w$, and hence the restriction functor $\cM(\delta + \nabla_\varepsilon) \to {\rm D}^b(X^{\rm{ss}}(\ell)/G)$ is fully faithful.
\end{lem}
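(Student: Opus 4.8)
The plan is to produce a single tuple $w=(w_0,\dots,w_n)$ of real numbers for which the full subcategory $\cM(\delta+\nabla_\varepsilon)$ is contained in $\cG^w$. Once this is done the lemma follows at once: by \cite[Theorem 2.10]{HL} (valid for real $w$, as recalled above) the restriction functor $\cG^w\to{\rm D}^b(X^{\rm{ss}}(\ell)/G)$ is an equivalence, so its restriction to the full subcategory $\cM(\delta+\nabla_\varepsilon)$ is fully faithful. Moreover $\cG^w$ is a thick subcategory of ${\rm D}^b(X/G)$ --- it is cut out by requiring the $\lambda_i$-weights of $F|_{Z_i^{\rm{ss}}}$ to lie in a fixed interval for each $i$, a condition closed under shifts, cones, and direct summands --- and $\cM(\delta+\nabla_\varepsilon)$ is split-generated by the bundles $\cO_X\otimes U$ with $U\in\Rep(G)$ having all weights in $\delta+\nabla_\varepsilon$; hence it is enough to arrange that $\cO_X\otimes U\in\cG^w$ for all such $U$ simultaneously.

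The next step is the weight computation. Because $X$ is a linear representation and $Z_i^{\rm{ss}}$ lies in the $\lambda_i$-fixed subspace $X^{\lambda_i}\subset X$, the one-parameter subgroup $\lambda_i$ acts trivially on $\cO_{Z_i^{\rm{ss}}}$, so the $\lambda_i$-weights of $(\cO_X\otimes U)|_{Z_i^{\rm{ss}}}$ are exactly the $\lambda_i$-weights of $U$, namely the integers $\pair{\lambda_i}{\mu}$ as $\mu$ runs over the weights of $U$. Thus $\cO_X\otimes U\in\cG^w$ means that $\{\pair{\lambda_i}{\mu}\mid \mu\text{ a weight of }U\}\subset[w_i,w_i+\eta_i)$ for every $i$, where $\eta_i=\eta_{\lambda_i}$. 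Writing $\mu=\delta+\chi$ with $\chi\in\nabla_\varepsilon$, and using that each $\eta_i$ is a positive integer so that every half-open real interval of length $\eta_i$ contains at most $\eta_i$ consecutive integers, it suffices to show that for each $i$ the set $R_i:=\{\pair{\lambda_i}{\chi}\mid\chi\in\nabla_\varepsilon\}$ is contained in some half-open real interval of length $\eta_i$; translating by $\pair{\lambda_i}{\delta}$ and rounding to an integer, one then obtains a suitable $w_i$ separately for each stratum.

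This last claim about $\nabla_\varepsilon$ is the heart of the matter. Reading off the defining inequalities of $\nabla_\varepsilon$ directly: if $\pair{\lambda_i}{\varepsilon}>0$ then $R_i\subseteq(-\tfrac{\eta_i}{2},\tfrac{\eta_i}{2}]$, while if $\pair{\lambda_i}{\varepsilon}<0$ then (applying the inequalities to $-\lambda_i$ and using $\eta_{-\lambda_i}=\eta_{\lambda_i}$, valid since $X$ is quasi-symmetric) $R_i\subseteq[-\tfrac{\eta_i}{2},\tfrac{\eta_i}{2})$; both are half-open of length $\eta_i$, as required. So the only remaining task is to handle the one-parameter subgroups $\lambda_i$ with $\pair{\lambda_i}{\varepsilon}=0$. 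Here I would use that $\varepsilon$ is generic for $\ol{\nabla}$ together with the description of the facets of $\ol{\nabla}$ supplied by \autoref{lem:polytopes} and \autoref{cor:parallel-facets}: for such a $\lambda_i$ the inequality $\pair{\lambda_i}{\cdot}\le\tfrac{\eta_i}{2}$ cannot be active along a facet of $\ol{\nabla}$ (otherwise $\varepsilon$ would be parallel to that facet), and one exploits this --- together with the fact that $\nabla_\varepsilon$ is a ``half-open'' version of $\ol{\nabla}$ obtained by pushing in the $\varepsilon$-direction --- to conclude that at least one of the two extreme values $\pm\tfrac{\eta_i}{2}$ fails to be attained on $\nabla_\varepsilon$, so that $R_i$ is again contained in a half-open interval of length $\eta_i$. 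Under the alternative hypothesis in the footnote, namely $\pair{\lambda_i}{\varepsilon}\neq0$ for all $i$, this paragraph collapses and only its first sentence is needed.

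I expect the main obstacle to be exactly this treatment of the strata with $\pair{\lambda_i}{\varepsilon}=0$: the one-parameter subgroups $\lambda_i$ are determined by the Kempf--Ness stratification attached to $\ell$, whereas the genericity hypothesis is phrased in terms of $\ol{\nabla}$, so the work lies in translating between the two --- i.e.\ in pinning down which $\lambda_i$ can be ``tight'' on $\ol{\nabla}$ and in showing that the half-open shape of $\nabla_\varepsilon$ always kills at least one extreme weight along each such direction. Everything else --- the reduction to generators, the identification of the restricted weights with the weights of $U$, and the interval bookkeeping --- is routine.
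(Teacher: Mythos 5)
Your proposal is correct and follows essentially the same route as the paper: reduce to the generators $\cO_X\otimes U$, identify the restricted $\lambda_i$-weights with $\pair{\lambda_i}{\mu}$ for $\mu\in\op{Char}(U)$, use the sign of $\pair{\lambda_i}{\varepsilon}$ to get the half-open intervals $(-\eta_i/2,\eta_i/2]$ resp.\ $[-\eta_i/2,\eta_i/2)$, and handle $\pair{\lambda_i}{\varepsilon}=0$ via genericity of $\varepsilon$ for $\ol{\nabla}$. For that last case the paper's argument is slightly more direct than your appeal to \autoref{cor:parallel-facets}: if some $\chi\in\delta+\nabla_\varepsilon$ attained an extreme value of $\pair{\lambda_i}{\cdot}$, then the whole segment $\chi-t\varepsilon$ ($0\le t\ll1$) would attain it, forcing a segment in direction $\varepsilon$ to lie in $\partial(\delta+\ol{\nabla})$ and hence $\varepsilon$ to be parallel to a face — which works for faces of any dimension, not just facets.
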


\begin{proof}
The vector bundle $\cO_X \otimes U|_{Z^{\rm{ss}}_i}$ lies in the relevant weight windows with respect to $\lambda_i$ as long as the $\lambda_i$-weights of $U$ lie in the interval $[w_i ,w_i+\eta_{\lambda_i})$. The width of this interval is precisely the width of $\ol{\nabla}$ in the $\lambda_i$-codirection, so we know that $\chi \in \delta + \ol{\nabla}$ implies that $\pair{\lambda_i}{\chi} \in \pair{\lambda_i}{\delta} + [-\eta_i/2,\eta_i/2]$. We must choose $w_i$ so that the previous claim is still true after replacing $\ol{\nabla}$ with $\nabla_\varepsilon$ and the closed interval $[-\eta_i/2,\eta_i/2]$ with the half-open interval $[-\eta_i/2,\eta_i/2)$.

We choose a very small $0<a\ll 1$ and let
\[
w_i = \pair{\lambda_i}{\delta} - \frac{\eta_{\lambda_i}}{2} + 
\begin{cases} a & \text{if } \pair{\lambda_i}{\varepsilon}>0 \\ 0 & \text{else} \end{cases}.
\]
By definition $\chi \in \delta + \nabla_\varepsilon$ if and only if $\chi - t \varepsilon \in \delta + \ol{\nabla}$ for all $0\leq t\ll 1$. This implies that $\pair{\lambda_i}{\chi} \in t\pair{\lambda_i}{\varepsilon} +  \pair{\lambda_i}{\delta} + [-\eta_i/2,\eta_i/2]$ for all $0 \leq t \ll 1$, and so 
\begin{equation} \label{eqn:weight_bounds}
\pair{\lambda_i}{\chi} \in 
\begin{cases}  \pair{\lambda_i}{\delta} + (-\eta_{\lambda_i}/2,\eta_{\lambda_i}/2] & \text{if } \pair{\lambda_i}{\varepsilon} > 0 \\ \pair{\lambda_i}{\delta} + [-\eta_{\lambda_i}/2,\eta_{\lambda_i}/2) & \text{if } \pair{\lambda_i}{\varepsilon} < 0 \end{cases}.
\end{equation}
In both cases, if we assume that $\chi \in M$, then because $a$ is very small we will have $\chi \in [w_i,w_i+\eta_i)$. So in particular as long as $\pair{\lambda_i}{\varepsilon} \neq 0$, then for any $U$ whose character lies in $\delta + \nabla_\varepsilon$, the locally free sheaf $\cO_X \otimes U$ will satisfy the grade restriction rule for $\cG^w$ with respect to $\lambda_i$.

What remains is the case where $\pair{\lambda_i}{\varepsilon} = 0$ for some $i$. In this case we claim that for any $\chi \in \delta + \nabla_\varepsilon$ we have $\pair{\lambda_i}{\chi} \in (w_i,w_i+\eta_i)$, because any $\chi \in \delta + \ol{\nabla}$ such that $\pair{\lambda_i}{\chi} = w_i$ or $w_i+\eta_{\lambda_i}$ cannot lie in $\nabla_\varepsilon$. If it did, then $\chi - t\varepsilon \in \delta + \ol{\nabla}$ would also maximize (respectively, minimize) $\pair{\lambda_i}{-}$ for all $0 \leq t \ll 1$. Every maximizer (respectively, minimizer) of this function must occur on the boundary $\partial(\delta + \ol{\nabla})$, so we conclude that the line segment $\chi -t \varepsilon$ must be contained in the boundary, which contradicts that $\varepsilon$ is generic for $\ol{\nabla}$.
\end{proof}

\subsection{Constructing complexes in ${\rm D}^b(X/G)$.}

To set notation, we recall the Borel--Weil--Bott theorem. For a reference, see \cite[II, Cor. 5.5, 5.6]{jantzen}. The convention for Borel subgroups in \cite[II, 1.8]{jantzen} matches the one in \S\ref{sec:notation}. Given a weight $\mu$ of $T$ and $w \in W$, define 
\[
w * \mu = w(\mu + \rho) - \rho.
\]
If $\mu + \rho$ has a trivial stabilizer in $W$, then there is a unique $w \in W$ such that $w*\mu$ is a dominant weight, and in that case, we write $\mu^+ = w*\mu$. As before, $w_0 \in W$ is the longest element. A character $\chi$ of $T$ determines a $1$-dimensional representation $k_\chi$ of $B$, and hence a $G$-equivariant line bundle $\cL(\chi) = G \times_B k_\chi$ on $G/B$.

\begin{recollection}[Borel--Weil--Bott]
If $\mu+\rho$ has a nontrivial stabilizer in $W$, then all cohomology groups ${\rm H}^i(G/B; \cL(\mu))$ vanish. Otherwise, let $w$ be such that $w*\mu$ is dominant. Then the cohomology of $\cL(\mu)$ vanishes except in degree $\ell(w)$, and we have a $G$-equivariant isomorphism ${\rm H}^{\ell(w)}(G/B; \cL(\mu)) \cong V(\mu^+)$.
\end{recollection}

\begin{lem} \label{lem:BWB-dual}
For any weight $\alpha$ such that $\alpha^+$ is defined, then $(-w_0 \alpha)^+$ is defined, and
\[
V((-w_0\alpha)^+)^\vee \cong V(\alpha^+).
\]
\end{lem}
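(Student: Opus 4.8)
The plan is to identify $V((-w_0\alpha)^+)$ with the dual of $V(\alpha^+)$ by combining the standard fact that $V(\chi)^\vee \cong V(-w_0\chi)$ for a dominant weight $\chi$ with a bookkeeping of how the dot-action $*$ interacts with the involution $\alpha \mapsto -w_0\alpha$. First I would recall that $-w_0$ permutes the dominant chamber: if $\chi$ is dominant then so is $-w_0\chi$, and $V(\chi)^\vee \cong V(-w_0\chi)$ (this is the highest weight of the dual representation). So the content of the lemma is purely a statement about Weyl-group combinatorics: that $(-w_0\alpha)^+ = -w_0(\alpha^+)$, together with the observation that the right-hand side is manifestly defined (i.e.\ dominant) when $\alpha^+$ is.

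The key computation is the identity $-w_0(w*\mu) = (w_0 w w_0)*(-w_0\mu)$ for any $w \in W$ and any weight $\mu$. This follows from $\rho = -w_0\rho$ applied to the three parts of the definition $w*\mu = w(\mu+\rho) - \rho$: one has $-w_0(w(\mu+\rho) - \rho) = -w_0 w (\mu + \rho) + w_0\rho = (w_0 w w_0) w_0(\mu+\rho) - \rho = (w_0 w w_0)(-(-w_0\mu) - \rho)\cdot(-1)\ldots$ — more cleanly, writing $v = w_0 w w_0$ (again an element of $W$, since conjugation by $w_0$ is an automorphism), and using $-w_0\rho = \rho$, one checks directly that $-w_0(w*\mu) = v*(-w_0\mu)$. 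Applying this with $\mu = \alpha$ and $w$ the element for which $w*\alpha = \alpha^+$ is dominant: since $-w_0$ sends the dominant chamber to itself, $v*(-w_0\alpha) = -w_0(\alpha^+)$ is dominant, which shows $(-w_0\alpha)^+$ is defined and equals $-w_0(\alpha^+)$. I would also note that $\alpha + \rho$ has trivial $W$-stabilizer if and only if $-w_0\alpha + \rho = -w_0(\alpha+\rho)$ does, so the hypothesis ``$\alpha^+$ is defined'' transfers correctly.

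Putting the pieces together: $(-w_0\alpha)^+ = -w_0(\alpha^+)$, and then $V((-w_0\alpha)^+) = V(-w_0(\alpha^+)) \cong V(\alpha^+)^\vee$, which upon dualizing gives the claimed isomorphism $V((-w_0\alpha)^+)^\vee \cong V(\alpha^+)$. I do not expect a serious obstacle here; the only point requiring care is the manipulation $-w_0\rho = \rho$ and the resulting conjugation $w \mapsto w_0 w w_0$, and making sure the stabilizer/regularity hypothesis is symmetric under $\alpha \mapsto -w_0\alpha$ — both are routine. An alternative, essentially equivalent route would be to prove the lemma directly on the geometric side: Serre duality on $G/B$ identifies $H^i(G/B;\cL(\mu))^\vee$ with $H^{\dim G/B - i}(G/B; \cL(-\mu) \otimes \omega_{G/B})$ and $\omega_{G/B} = \cL(2\rho)\cdot(\text{sign})$... but the twist by the canonical bundle makes this messier than the Weyl-group computation, so I would favor the combinatorial argument above.
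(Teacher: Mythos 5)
Your argument is correct and is essentially identical to the paper's: both reduce the lemma to the identity $(-w_0\alpha)^+ = -w_0(\alpha^+)$ via the conjugated element $w_0 w w_0$ together with $-w_0\rho = \rho$, and then conclude with $V(\mu)^\vee \cong V(-w_0\mu)$ for dominant $\mu$. No substantive differences.
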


\begin{proof}
Pick $u \in W$ such that $\alpha^+ = u(\alpha + \rho) - \rho$. Let $u' = w_0 u w_0^{-1}$. Since $-w_0 \rho = \rho$, we have
\begin{align*}
u'(-w_0 \alpha + \rho) - \rho &= -u'w_0(\alpha + \rho) - \rho
= -w_0 u(\alpha + \rho) - \rho\\
&= -w_0( u (\alpha + \rho) - \rho) = -w_0 (\alpha^+).
\end{align*}
Since $\alpha^+$ is dominant, the same is true for $-w_0(\alpha^+)$, so $(-w_0 \alpha)^+$ exists and is equal to $-w_0(\alpha^+)$. We finish using the fact that for dominant $\mu$, we have $V(\mu)^\vee \cong V(-w_0 \mu)$. 
\end{proof}

Let $\lambda$ be an anti-dominant one-parameter subgroup. Define a subspace $X^{\lambda \geq 0} \subset X$ spanned by eigenvectors of nonnegative weight with respect to the action of $\bG_m$ via $\lambda$. Then $X^{\lambda \ge 0}$ is a $B$-submodule of $X$. Define $\cS(\lambda) = G \times_{B} X^{\lambda \ge 0}$, which is a $G$-equivariant vector bundle on $G/B$; further, it is a subbundle of the trivial bundle $\cO_{G/B} \times X$. 

Let $\xi(\lambda)$ be the locally free subsheaf of $\cO_{G/B} \otimes X^\dual$ which is the annihilator of $\cS(\lambda)$. These are the local linear equations that define $\cS(\lambda)$ in $G/B \times X$, and we have a locally free resolution of $\cO_{\cS(\lambda)}$ over $G/B \times X$ given by a Koszul complex $\bigwedge^\bullet p^*\xi(\lambda)$, where $p \colon G/B \times X \to G/B$ is the projection onto the first factor. Let $\pi \colon G/B \times X \to X$ denote the projection onto the second factor. By \cite[Theorem 5.1.2, Prop. 5.2.5]{weyman}, the derived pushforward ${\rm R} \pi_*(\cO_{\cS(\lambda)} \otimes \cL(\chi))$ is quasi-isomorphic to a minimal (i.e., the entries in the differentials vanish at the origin of $X$) complex $(C_{\lambda, \chi})_\bullet$ with terms 
\[
(C_{\lambda, \chi})_i = \cO_X \otimes \bigoplus_{j \in \bZ} {\rm H}^j(G/B; \cL(\chi) \otimes \bigwedge^{i+j} \xi(\lambda)).
\]

\begin{prop} \label{prop:C_unstable}
\begin{enumerate}[\rm (a)]
\item If $\langle \lambda, \ell \rangle > 0$, then the homology of $C_{\lambda, \chi}$ is supported on $X \setminus X^{\rm ss}(\ell)$.
\item The terms of the complex $C_{\lambda, \chi}$ are direct sums of locally free sheaves of the form
\[
\cO_X \otimes V((\chi - \beta_{i_1} - \cdots - \beta_{i_p})^+)
\]
where $i_1, \dots, i_p$ are distinct and $\langle \lambda, \beta_{i_j} \rangle < 0$. If $\chi$ is dominant and $(\chi - \beta_{i_1} - \cdots - \beta_{i_p})^+ = \chi$ implies $p=0$, then $\cO_X \otimes V(\chi)$ appears exactly once.
\end{enumerate}
\end{prop}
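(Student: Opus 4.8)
The plan is to read both statements off the explicit formula for the terms of $C_{\lambda,\chi}$, which is the minimal model of ${\rm R}\pi_*(\cO_{\cS(\lambda)}\otimes p^*\cL(\chi))$; part (a) is then a matter of support combined with the Hilbert--Mumford criterion, and part (b) is Borel--Weil--Bott applied to a filtration of $\bigwedge^\bullet\xi(\lambda)$.

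For part (a): since $\pi\colon G/B\times X\to X$ is proper and $p^*\cL(\chi)$ is a line bundle, the homology sheaves of ${\rm R}\pi_*(\cO_{\cS(\lambda)}\otimes p^*\cL(\chi))$, equivalently of $C_{\lambda,\chi}$, are supported inside $\pi(\cS(\lambda))$. The fiber of $\cS(\lambda)$ over $gB$ is $g\cdot X^{\lambda\geq 0}$, so $\pi(\cS(\lambda))=G\cdot X^{\lambda\geq 0}$, and since $X^{\rm{ss}}(\ell)$ is $G$-stable it suffices to show $X^{\lambda\geq 0}\subseteq X\setminus X^{\rm{ss}}(\ell)$ when $\pair{\lambda}{\ell}>0$. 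But for $x\in X^{\lambda\geq 0}$ the limit $\lim_{t\to 0}\lambda(t)\cdot x$ exists, so in the sign convention recalled in the excerpt the Hilbert--Mumford criterion would force $\pair{\lambda}{\ell}<0$ if $x$ were semistable, a contradiction.

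For part (b): the $B$-module $\xi(\lambda)$ is the annihilator of $X^{\lambda\geq 0}$ inside $X^\dual$, and -- this is where anti-dominance of $\lambda$ is used, to make $X^{\lambda\geq 0}$ a $B$-submodule -- it carries a $B$-stable filtration whose one-dimensional subquotients are the weight lines $k_{-\beta_i}$ with $\pair{\lambda}{\beta_i}<0$. Hence the $G$-equivariant bundle $\bigwedge^{m}\xi(\lambda)$ has a filtration with subquotients $\cL(-\beta_{i_1}-\cdots-\beta_{i_m})$ over $m$-element subsets $\{i_1,\dots,i_m\}$ with every $\pair{\lambda}{\beta_{i_j}}<0$, and tensoring with $\cL(\chi)$ turns these into $\cL(\chi-\beta_{i_1}-\cdots-\beta_{i_m})$. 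The spectral sequence of this filtration shows each $H^{j}(G/B;\cL(\chi)\otimes\bigwedge^{i+j}\xi(\lambda))$ is a subquotient of $\bigoplus H^{j}(G/B;\cL(\chi-\beta_{i_1}-\cdots-\beta_{i_{i+j}}))$, and each summand is, by Borel--Weil--Bott, either zero or $V((\chi-\beta_{i_1}-\cdots-\beta_{i_{i+j}})^+)$ in the single degree $\ell(w)$ dictated by the theorem. Since $G$-representations are semisimple in characteristic $0$, the subquotient is a direct summand, hence itself a direct sum of such irreducibles; substituting into $(C_{\lambda,\chi})_i=\cO_X\otimes\bigoplus_j H^j(\cdots)$ proves the first assertion. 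For the multiplicity claim, assume $\chi$ is dominant and that $(\chi-\beta_{i_1}-\cdots-\beta_{i_p})^+=\chi$ forces $p=0$: any copy of $V(\chi)$ comes from a subquotient line bundle $\cL(\chi-\beta_{i_1}-\cdots-\beta_{i_m})$ with $(\chi-\beta_{i_1}-\cdots-\beta_{i_m})^+=\chi$, so $m=0$; then the bundle is $\cL(\chi)$, and as $\chi$ is dominant $H^\bullet(G/B;\cL(\chi))=V(\chi)$ is concentrated in degree $0$, forcing $j=0$ and (from $m=i+j$) $i=0$. Thus $\cO_X\otimes V(\chi)$ occurs only in $(C_{\lambda,\chi})_0$, exactly once, from the $\bigwedge^0\xi(\lambda)=\cO_{G/B}$ slot.

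The step I expect to require the most care is the bookkeeping in (b): one must make sure that passing from the filtered bundle to its cohomology yields only \emph{subquotients} of the cohomology of the graded pieces -- so that, invoking semisimplicity, no summands appear beyond the listed $\cO_X\otimes V((\chi-\beta_{i_1}-\cdots-\beta_{i_p})^+)$ -- and that the cohomological degree in $H^j$ is pinned down by $\ell(w)$ in Borel--Weil--Bott precisely so that the unique copy of $V(\chi)$ lands in homological degree $0$ of $C_{\lambda,\chi}$ rather than being smeared over several terms. Everything else is essentially formal.
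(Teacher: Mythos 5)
Your proposal is correct and follows essentially the same route as the paper: part (a) is the observation that $\pi(\cS(\lambda))$ lies in the unstable locus by Hilbert--Mumford, and part (b) uses the $G$-equivariant filtration of $\bigwedge^\bullet\xi(\lambda)$ by the line bundles $\cL(-\beta_{i_1}-\cdots-\beta_{i_p})$, the associated spectral sequence, and Borel--Weil--Bott, with the unique copy of $V(\chi)$ surviving to the $E_\infty$ page. Your added bookkeeping (semisimplicity turning subquotients into summands, and pinning the copy of $V(\chi)$ to $(C_{\lambda,\chi})_0$) only makes explicit what the paper leaves implicit.
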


\begin{proof}
(a) The image of $\cS(\lambda)$ under $\pi$ is contained in the unstable locus by the Hilbert--Mumford criterion.

(b) First, $\bigwedge^\bullet \xi(\lambda)$ has a $G$-equivariant filtration by the line bundles $\cL(-\beta_{i_1} - \cdots - \beta_{i_p})$ where $i_1, \dots, i_p$ are distinct, and $\langle \lambda, \beta_{i_j} \rangle < 0$. This filtration gives a $G$-equivariant spectral sequence to compute the terms of $C_{\lambda, \chi}$ whose second page can be computed using Borel--Weil--Bott. The assumption of the last sentence implies that $\cO_X \otimes V(\chi)$ survives to the infinity page.
\end{proof}

Note that $-w_0\lambda$ is also anti-dominant. The derived pushforward ${\rm R}\pi_* ( \cO_{\cS(-w_0\lambda)} \otimes \cL(-w_0 \chi))$ is quasi-isomorphic to a minimal complex $(D_{\lambda, \chi})_\bullet$ with terms
\begin{align*}
(D_{\lambda, \chi})_i &= \cO_X \otimes \bigoplus_{j \in \bZ} {\rm H}^j(G/B; \cL(-w_0 \chi) \otimes \bigwedge^{i+j} \xi(-w_0\lambda))
\end{align*}

\begin{prop} \label{prop:D_unstable}
\begin{enumerate}[\rm (a)]
\item If $\langle \lambda, \ell \rangle < 0$, then the homology of $D_{\lambda, \chi}^\dual$ is supported on $X \setminus X^{\rm ss}(\ell)$.

\item The terms of the complex $D_{\lambda, \chi}^\dual$ are direct sums of locally free sheaves of the form
\[
\cO_X \otimes V((\chi + \beta_{i_1} + \cdots + \beta_{i_p})^+)
\]
where $i_1, \dots, i_p$ are distinct and $\pair{\lambda}{\beta_{i_j}} > 0$. If $\chi$ is dominant and $(\chi + \beta_{i_1} + \cdots + \beta_{i_p})^+ = \chi$ implies $p=0$, then $\cO_X \otimes V(\chi)$ appears exactly once.
\end{enumerate}
\end{prop}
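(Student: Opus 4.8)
The plan is to reduce \autoref{prop:D_unstable} to \autoref{prop:C_unstable} via the substitution $\lambda \mapsto -w_0\lambda$, $\chi \mapsto -w_0\chi$, followed by dualization. Comparing the two constructions directly, one sees that $D_{\lambda,\chi}$ is \emph{literally} the complex built from the anti-dominant cocharacter $-w_0\lambda$ and the weight $-w_0\chi$; in other words, writing $C_{\mu,\xi}$ for the minimal complex computing ${\rm R}\pi_*(\cO_{\cS(\mu)}\otimes\cL(\xi))$ for anti-dominant $\mu$, we have $D_{\lambda,\chi} = C_{-w_0\lambda,\,-w_0\chi}$ (recall $-w_0\lambda$ is anti-dominant). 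So the first step is simply to record this identification, after which every assertion of \autoref{prop:C_unstable} becomes available for $D_{\lambda,\chi}$ after relabeling, and it remains to transport the conclusions across the derived dual $(-)^\dual = \op{RHom}(-,\cO_X)$.

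For part (a): \autoref{prop:C_unstable}(a), applied to $C_{-w_0\lambda,-w_0\chi}$, says the homology of $D_{\lambda,\chi}$ is supported on $X \setminus X^{\rm ss}(\ell)$ as soon as $\pair{-w_0\lambda}{\ell}>0$. Since $\ell \in \op{Pic}(BG)_\bR = M_\bR^W$ is Weyl-invariant and the inner product is $W$-invariant, $\pair{-w_0\lambda}{\ell} = -\pair{\lambda}{w_0\ell} = -\pair{\lambda}{\ell}$, so this condition is exactly the hypothesis $\pair{\lambda}{\ell}<0$. To pass to $D_{\lambda,\chi}^\dual$ I would use that $X$ is smooth, so $\op{RHom}(-,\cO_X)$ is an anti-autoequivalence of $\Perf(X/G) = {\rm D}^b(X/G)$ preserving (closed) supports; hence the homology of the dual complex is supported on the same unstable locus.

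For part (b): applying \autoref{prop:C_unstable}(b) to $C_{-w_0\lambda,-w_0\chi}$ describes the terms of $D_{\lambda,\chi}$ as direct sums of $\cO_X \otimes V((-w_0\chi - \beta_{i_1} - \cdots - \beta_{i_p})^+)$ with the $i_j$ distinct and $\pair{-w_0\lambda}{\beta_{i_j}}<0$; for such terms $(-w_0\chi - \beta_{i_1} - \cdots - \beta_{i_p})^+$ is defined, so \autoref{lem:BWB-dual} applies with $\alpha = -w_0\chi - \beta_{i_1} - \cdots - \beta_{i_p}$. Dualizing termwise and using $V(\alpha^+)^\dual \cong V((-w_0\alpha)^+)$ with $-w_0\alpha = \chi + w_0\beta_{i_1} + \cdots + w_0\beta_{i_p}$, each summand of $D_{\lambda,\chi}^\dual$ becomes $\cO_X \otimes V((\chi + w_0\beta_{i_1} + \cdots + w_0\beta_{i_p})^+)$. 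Now set $\gamma_{i_j} = w_0\beta_{i_j}$: since the $T$-weights of $X^\dual$ form a $W$-stable multiset, $w_0$ merely permutes the $\beta$'s (preserving distinctness of indices), and $\pair{\lambda}{\gamma_{i_j}} = \pair{w_0\lambda}{\beta_{i_j}} = -\pair{-w_0\lambda}{\beta_{i_j}} > 0$, so after renaming the $\gamma$'s this is precisely the asserted description. For the last sentence, $\chi$ dominant forces $-w_0\chi$ dominant, so \autoref{prop:C_unstable}(b) gives that $\cO_X \otimes V(-w_0\chi)$ occurs exactly once in $D_{\lambda,\chi}$ provided $(-w_0\chi - \beta_{i_1} - \cdots - \beta_{i_p})^+ = -w_0\chi$ forces $p=0$; by the identity $(-w_0\alpha)^+ = -w_0(\alpha^+)$ established inside the proof of \autoref{lem:BWB-dual} this hypothesis is equivalent to the stated one under the same $w_0$-relabeling, and since $V(-w_0\chi)^\dual \cong V(\chi)$ for $\chi$ dominant, dualizing yields that $\cO_X \otimes V(\chi)$ occurs exactly once in $D_{\lambda,\chi}^\dual$.

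Once \autoref{prop:C_unstable} and \autoref{lem:BWB-dual} are in hand the argument is essentially formal, and the only place I expect to need genuine care --- and hence the main (if minor) obstacle --- is the combinatorial bookkeeping in part (b): one must fix a bijection of $\{1,\dots,d\}$ implementing $\beta_i \mapsto w_0\beta_i$ and verify that distinctness of the chosen indices, the count $p$, and the sign conditions on $\pair{\lambda}{-}$ are all transported correctly under it. Everything else is sign-chasing using the $W$-invariance of the pairing and of $\ell$.
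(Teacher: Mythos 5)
Your proposal is correct and follows essentially the same route as the paper: both reduce $D_{\lambda,\chi}$ to the $C$-construction for the anti-dominant cocharacter $-w_0\lambda$ and weight $-w_0\chi$, use the Hilbert--Mumford criterion for (a), and transport the filtration by line bundles through \autoref{lem:BWB-dual} together with the relabeling $\beta_i \mapsto w_0\beta_i$ for (b). Your version merely makes explicit a few points the paper leaves implicit (that dualizing preserves supports, and the sign computation $\pair{-w_0\lambda}{\ell} = -\pair{\lambda}{\ell}$ using $W$-invariance of $\ell$).
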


\begin{proof}
(a) If $\pair{\lambda}{\ell} < 0$, then $\pair{-w_0\lambda}{\ell} > 0$, so the image of $\cS(-w_0\lambda)$ under $\pi$ is contained in the unstable locus by the Hilbert--Mumford criterion.

(b) First, $\cL(-w_0\chi) \otimes \bigwedge^\bullet \xi(-w_0\lambda)$ has a $G$-equivariant filtration by the line bundles $\cL(-w_0\chi -\beta_{i_1} - \cdots - \beta_{i_p})$ where $i_1, \dots, i_p$ are distinct, and $\pair{-w_0 \lambda}{\beta_{i_j}} < 0$. Equivalently, by replacing $\beta_i$ with $w_0 \beta_i$, we can rewrite them as the line bundles $\cL(-w_0(\chi + \beta_{i_1} + \cdots + \beta_{i_p}))$ where $i_1, \dots, i_p$ are distinct and $\pair{\lambda}{\beta_{i_j}} > 0$. By \autoref{lem:BWB-dual}, we have $V((-w_0(\chi + \beta_{i_1} + \cdots + \beta_{i_p}))^+)^\vee \cong V((\chi + \beta_{i_1} + \cdots + \beta_{i_p})^+)$. The rest of the proof is similar to the proof for \autoref{prop:C_unstable}.
\end{proof}

\begin{rem} \label{rmk:euler-char}
It is easy to give a formula for the equivariant Euler characteristics of the complexes we just constructed, and we will use this formula in \autoref{rmk:K-theory} to study the effect of our derived equivalences on $K$-theory. First, the bundle $\xi(\lambda)$ has a $G$-equivariant filtration by the line bundles $\cL(-\beta_{i_1} - \cdots - \beta_{i_p})$ where the $i_1,\dots,i_p$ are distinct and $\pair{\lambda}{\beta_{i_j}} < 0$. For any weight $\nu$, define 
\[
{\rm ch}(\nu) = \frac{\sum_{w \in W} (-1)^{\ell(w)} \epsilon_{w(\nu+\rho)}}{ \sum_{w \in W} (-1)^{\ell(w)} \epsilon_{w \rho}}
\]
where $\epsilon_\alpha$ is the character of the weight $\alpha$ for the torus $T$. By the Weyl denominator formula and Borel--Weil--Bott theorem, this is the Euler characteristic $\sum_j (-1)^j [{\rm H}^j(G/B; \cL(\nu))]$. Let $\cI_-(\lambda) = \{i \mid \pair{\lambda}{\beta_i}<0\}$. Then the Euler characteristic of $C_{\lambda, \chi}$ is 
\[
\sum_{S \subseteq \cI_-(\lambda)} (-1)^{|S|} {\rm ch}\bigg(\chi - \sum_{i \in S}\beta_i\bigg).
\]
Similarly, setting $\cI_+(\lambda) = \{ i \mid \pair{\lambda}{\beta_i}>0\}$, the Euler characteristic of $D_{\lambda,\chi}^\vee$ is 
\[
\sum_{S \subseteq \cI_+(\lambda)} (-1)^{|S|} {\rm ch}\bigg(\chi + \sum_{i \in S}\beta_i\bigg).
\]
\end{rem}

\subsection{Proof of \autoref{thm:magic_windows}: the {\v S}penko--Van den Bergh algorithm revisited} \label{sec:SVDB}

In this section we complete the proof of \autoref{thm:magic_windows}. The fully faithfulness part of the theorem is covered by \autoref{lem:fully_faithful} above combined with the observation that $\cM(\delta + \ol{\nabla}) = \cM(\delta + \nabla_\varepsilon)$ for any $\varepsilon \in M_\bR$ when $\partial(\delta + \ol{\nabla}) \cap M = \emptyset$. It therefore suffices to show that the vector bundles in $\cM(\delta+\ol{\nabla})$ generate ${\rm D}^b(X^{\rm{ss}}(\ell)/G)$ when $X^{\rm{ss}}(\ell) = X^{\rm{s}}(\ell)$. Note that by \autoref{cor:perturbation} we may perturb $\ell$ slightly so that it is generic for $\ol{\Sigma}$ without changing $X^{\rm{ss}}(\ell)$, so we will assume for the remainder of the proof that this is the case. The argument closely follows the algorithm used to produce non-commutative resolutions of $\Spec(\cO_X^G)$ in \cite{SVdB}, with only minor modifications required because our goal, generation over the semistable locus, is different from that of \cite{SVdB}, which sought categories generated by equivariant vector bundles which have finite global dimension.

First, we reduce to the case where $G$ is connected. Let $G_e$ be the connected component of the identity of $G$. It follows from the Hilbert--Mumford criterion that $X^{G\text{\rm{-ss}}}(\ell) = X^{G_e\text{\rm{-ss}}}(\ell)$, and the polytope $\nabla_\varepsilon$ defined in terms of $G$ agrees with that defined in terms of $G_e$.\footnote{The maximal torus of $G$ is also a maximal torus of $G_e$, so there is no ambiguity in the meaning of $M$.} The map of stacks $\pi \colon X^{\rm{ss}}(\ell) / G_e \to X^{\rm{ss}}(\ell)/G$ is a representable finite \'{e}tale morphism with fiber $G/G_e$. In particular, if a set of vector bundles $\{\cV_\alpha\}$ generates ${\rm D}^b(X^{\rm{ss}}(\ell)/G_e)$, then the bundles $\pi_\ast \cV_\alpha = \cV_\alpha \otimes k[G/G_e]$ with their evident $G$-equivariant structure generate ${\rm D}^b(X^{\rm{ss}}(\ell)/G)$. Furthermore, the character of an irreducible representation $U \in \Irr(G)$ lies in $\delta + \nabla_\varepsilon$ if and only if the character of $U \otimes k[G/G_e]$ does, because $T$ acts trivially on $k[G/G_e]$.

We can therefore reduce the essential surjectivity part of \autoref{thm:magic_windows} to the case when $G = G_e$. In fact we show that $\cM(\delta + \ol{\nabla}) \to {\rm D}^b(X^{\rm{ss}}(\ell)/G)$ is essentially surjective for any $\delta$, which follows from \autoref{lem:polytopes} combined with the following:

\begin{prop}
Let $G$ be connected. Assume that $X$ is quasi-symmetric and that the stack $X^{T\text{\rm -ss}}(\ell)/T$ is Deligne--Mumford. Then ${\rm D}^b(X^{\rm{ss}}(\ell)/G)$ is generated by $\cO_X \otimes V(\mu)$ where $\mu \in M^+ \cap (-\rho + \delta + \frac{1}{2} \ol{\Sigma})$.
\end{prop}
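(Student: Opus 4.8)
\emph{Strategy.} The plan is to adapt the inductive ``reduction'' algorithm that {\v S}penko and Van den Bergh use in \cite{SVdB}, the only difference being that the goal is now generation over the semistable locus rather than finiteness of global dimension. Since the bundles $\cO_X\otimes V(\chi)$, with $\chi$ ranging over all dominant weights of $G$, split-generate ${\rm D}^b(X/G)$, and the restriction functor ${\rm D}^b(X/G)\to{\rm D}^b(X^{\rm ss}(\ell)/G)$ is essentially surjective, it suffices to prove: for every dominant $\chi$, the object $\cO_X\otimes V(\chi)|_{X^{\rm ss}(\ell)}$ lies in the split-closed subcategory generated by the $\cO_X\otimes V(\mu)|_{X^{\rm ss}(\ell)}$ with $\mu\in M^+\cap(-\rho+\delta+\tfrac12\ol\Sigma)$. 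Note that the hypothesis that $X^{T\text{-ss}}(\ell)/T$ is Deligne--Mumford forces, via \autoref{prop:semistable}, both that $\ol\Sigma$ spans $M_\bR$ and that $\ell$ is generic for $\ol\Sigma$; in particular, using quasi-symmetry ($\ol\Sigma=-\ol\Sigma$), $\ol\Sigma$ is a symmetric full-dimensional convex body, hence the unit ball of a $W$-invariant norm $\|\cdot\|$ on $M_\bR$.

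First I would set up a Noetherian induction: for any weight $\nu$ put $f(\nu):=\|\nu+\rho-\delta\|$. Because $\ol\Sigma$ and $\delta\in M^W_\bR$ are $W$-invariant and $\nu^++\rho=w(\nu+\rho)$ for the relevant $w\in W$, one has $f(\nu^+)=f(\nu)$ whenever $\nu^+$ is defined. The desired statement for $\chi$ is immediate when $f(\chi)\le\tfrac12$, which is exactly $\chi\in-\rho+\delta+\tfrac12\ol\Sigma$, so assume $r_0:=f(\chi)>\tfrac12$. Then $\chi+\rho-\delta$ lies on the boundary of $r_0\ol\Sigma$, so I pick a facet of $r_0\ol\Sigma$ through that point and a normal functional $\mu$ to it; arguing as in \autoref{lem:dominant} (using $W$-invariance of $\ol\Sigma$) one may take $\mu$ dominant, and then $\langle\mu,\chi+\rho-\delta\rangle=r_0\max\{\langle\mu,\nu\rangle:\nu\in\ol\Sigma\}$ with this maximum strictly positive. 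Since $\ell$ is generic for $\ol\Sigma$, the facet-normal satisfies $\langle\mu,\ell\rangle\ne0$. Put $\lambda:=-\mu$, an anti-dominant one-parameter subgroup. If $\langle\lambda,\ell\rangle>0$ I use the complex $C_{\lambda,\chi}$ of \S\ref{sec:SVDB}: by \autoref{prop:C_unstable}(a) it is acyclic after restriction to $X^{\rm ss}(\ell)$, and by \autoref{prop:C_unstable}(b) its terms are sums of $\cO_X\otimes V\big((\chi-\sum_{i\in S}\beta_i)^+\big)$ with $\langle\mu,\beta_i\rangle>0$ on $S$. The minimality hypothesis of \autoref{prop:C_unstable}(b) holds: $\chi+\rho$ is $W$-regular, so $(\chi-\sum_S\beta_i)^+=\chi$ would force $\sum_S\beta_i=0$, impossible for $S\ne\emptyset$ since $\sum_{i\in S}\langle\mu,\beta_i\rangle>0$. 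Hence $\cO_X\otimes V(\chi)$ occurs exactly once in $C_{\lambda,\chi}$, and a standard argument with the stupid truncations of the acyclic complex $C_{\lambda,\chi}|_{X^{\rm ss}(\ell)}$ shows $\cO_X\otimes V(\chi)|_{X^{\rm ss}(\ell)}$ lies in the split-closed subcategory generated by the remaining occurring bundles $\cO_X\otimes V\big((\chi-\sum_S\beta_i)^+\big)|_{X^{\rm ss}(\ell)}$ with $S\ne\emptyset$. If instead $\langle\lambda,\ell\rangle<0$ I argue symmetrically with $D_{\lambda,\chi}^\vee$ and \autoref{prop:D_unstable}, producing bundles $\cO_X\otimes V\big((\chi+\sum_S\beta_i)^+\big)$ with $\langle\mu,\beta_i\rangle<0$ on $S$. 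In both cases the new weight $\nu'$ satisfies $\langle\mu,\nu'+\rho-\delta\rangle<\langle\mu,\chi+\rho-\delta\rangle=r_0\max\langle\mu,\ol\Sigma\rangle$, i.e.\ it penetrates the chosen facet strictly less deeply.

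The hard part will be organizing these reductions into a procedure that terminates — this is precisely the termination argument of \cite{SVdB}, which I would import. When $r_0\ge1$ one checks directly, writing $\chi+\rho-\delta=\sum c_i\beta_i$ with $c_i\in[0,r_0]$ (possible since $\ol\Sigma=\sum_i[0,\beta_i]$ by quasi-symmetry) and observing $c_i=r_0$ on the indices with $\langle\mu,\beta_i\rangle>0$, that $f(\nu')\le r_0$; hence $f$ never increases under reduction and only finitely many weights are ever produced. To rule out cycles among weights with a common value of $f$, and to treat the range $\tfrac12<r_0<1$, one refines $f$ by a secondary combinatorial invariant — e.g.\ a lexicographic record of how $\chi+\rho-\delta$ sits relative to the facets of $r_0\ol\Sigma$, equivalently of which defining inequalities of $\ol\nabla$ the point $\chi-\delta$ violates — and verifies that each reduction step strictly decreases this refined invariant. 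The combinatorial description of the facets of the zonotope recalled in the proof of \autoref{prop:semistable}, together with $\ol\Sigma=-\ol\Sigma$, is what makes this bookkeeping work, so this is exactly where quasi-symmetry enters. Granting the termination, Noetherian induction reaches the base case $f(\chi)\le\tfrac12$ and finishes the proof; since nothing in the argument constrains $\delta$, it applies verbatim for every $\delta\in\op{Pic}(BG)_\bR$.
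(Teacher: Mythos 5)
Your overall strategy matches the paper's: induct on the $\ol{\Sigma}$-gauge $r_\chi = f(\chi)$ of $\chi+\rho-\delta$, pick an anti-dominant $\lambda$ normal to a facet of $r_\chi\ol{\Sigma}$ touching that point (so that genericity of $\ell$ gives $\pair{\lambda}{\ell}\neq 0$), and peel off $\cO_X\otimes V(\chi)$ using the Koszul-type complexes $C_{\lambda,\chi}$ or $D^\dual_{\lambda,\chi}$ according to the sign of $\pair{\lambda}{\ell}$. However, there are two genuine gaps, and they are really the same gap. First, your justification that $\cO_X\otimes V(\chi)$ occurs exactly once is wrong as stated: $(\chi-\sum_{i\in S}\beta_i)^+=\chi$ means $w(\chi-\sum_S\beta_i+\rho)=\chi+\rho$ for some $w\in W$, and regularity of $\chi+\rho$ only pins down $w$, it does not force $w=e$; so $\sum_S\beta_i=(\chi+\rho)-w^{-1}(\chi+\rho)$ can be nonzero, and your observation that $\pair{\mu}{\sum_S\beta_i}>0$ is consistent with this by \autoref{lem:dominant}. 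Second, the termination argument is precisely the hard content of the proof and you have deferred it entirely ("import from \cite{SVdB}", "one refines $f$ by a secondary invariant and verifies\dots"). The paper cannot import {\v S}penko--Van den Bergh verbatim because their goal (finite global dimension) differs from generation over the semistable locus, and it instead introduces the secondary invariant $p_\chi$ (the minimal number of coefficients equal to $-r_\chi$) and proves, via the line-by-line redistribution of coefficients in \autoref{lem:alpha-ineq} --- which is exactly where quasi-symmetry and the hypothesis $r_\chi>\tfrac12$ are used --- that every other term $\mu'$ satisfies $(r_{\mu'},p_{\mu'})<(r_\chi,p_\chi)$ lexicographically. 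That lexicographic decrease simultaneously rules out cycles, handles the crucial range $\tfrac12<r_\chi<1$ (your ``direct check'' only works for $r_\chi\ge 1$, where the shifted coefficient $-1$ still lies in $[-r_\chi,0]$), and, since $r$ and $p$ are invariants of the $*$-orbit (\autoref{lem:rp-chi}(a)), yields the multiplicity-one statement you tried to get from regularity. Without carrying out that redistribution, the induction does not close.
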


\begin{proof}
For any $\chi \in M^+$, we want to show that $\cO_X \otimes V(\chi)$ lies in the full triangulated subcategory generated by $\cO_X \otimes V(\mu)$ with $\mu \in M^+ \cap (-\rho + \delta + \frac{1}{2} \ol{\Sigma})$. We do this by a double induction first on the number
\[
r_\chi := \min \{ r \geq 0 \mid \chi \in -\rho + r \ol{\Sigma} + \delta \},
\]
and then with respect to the integer $p_\chi$, which we define to be the minimal number of $a_i$ which are equal to $-r_\chi$ among all ways of writing $\chi = -\rho + \sum_i a_i \beta_i + \delta$ with $a_i \in [-r_\chi, 0]$. Note that $r_\chi$ is a real number in general, but the set of possible $r_\chi$ is discrete if we restrict to $\chi \in M^+$. If $r_\chi \le \frac{1}{2}$, then $\chi \in -\rho + \delta + \frac{1}{2} \ol{\Sigma}$, so there is nothing to show. So we assume now that $r_\chi > \frac{1}{2}$.

First, there exists $\lambda$ such that for all $\mu \in \delta + r_\chi \ol{\Sigma}$, we have $\langle \lambda, \chi + \rho \rangle \ge \langle \lambda, \mu + \rho \rangle$. We may choose $\lambda$ to be a linear functional that is constant on a facet of $-\rho + r_\chi \ol{\Sigma} + \delta$, and so by \autoref{prop:semistable}, we have $\langle \lambda, \ell \rangle \ne 0$.  Let $w \in W$ be such that $w\lambda$ is dominant. By \autoref{lem:dominant}, $\pair{w\lambda}{\chi + \rho} \ge \pair{\lambda}{\chi+\rho}$. In particular, $\pair{w\lambda}{\chi + \rho} \ge \pair{w\lambda}{w(\mu+\rho)}$ for all $\mu \in \delta + r_\chi \ol{\Sigma}$. So, replacing $\lambda$ by $-w\lambda$, we may assume that $\lambda$ is anti-dominant, that $\langle \lambda, \chi \rangle \le \langle \lambda, \mu \rangle$ for all $\mu \in -\rho + r_\chi \ol{\Sigma} + \delta$, and that $\langle \lambda, \ell \rangle \ne 0$ (here we use that $\delta + r_\chi \ol{\Sigma}$ is $W$-invariant).

\begin{lem} \label{lem:rp-chi}
Write $\chi= -\rho + \sum_i a_i \beta_i + \delta$ with $a_i \in [-r_\chi, 0]$.
  \begin{enumerate}[\rm (a)]
  \item $r_\chi$ and $p_\chi$ depend only on the $W$-orbit of $\chi$ for the $*$-action.
  \item If $\langle \lambda, \beta_i \rangle > 0$, then $a_i = -r_\chi$.
  \item If $\langle \lambda, \beta_i \rangle < 0$, then $a_i = 0$.
  \end{enumerate}
\end{lem}

\begin{proof}
(a) This is clear from the definitions.

(b) Assume that the statement is false, i.e., there exists $i$ such that $\langle \lambda, \beta_i \rangle > 0$ and $0 \ge a_i > -r_\chi$. Then there exists $t > 0$ such that $\chi - t \beta_i \in -\rho + r_\chi \ol{\Sigma} + \delta$ and
\[
\langle \lambda, \chi - t \beta_i \rangle = \langle \lambda, \chi \rangle - t \langle \lambda, \beta_i \rangle < \langle \lambda, \chi \rangle,
\]
which contradicts the property for which $\lambda$ was chosen.

(c) This is similar to (b). 
\end{proof}

We now show that there exists a complex of free $\cO_X$-modules whose restriction to the semistable locus is exact, which contains the term $\cO_X \otimes V(\chi)$, and all other terms are of the form $\cO_X \otimes V(\mu)$ with either $r_\mu < r_\chi$ or $r_\mu=r_\chi$ and $p_\mu < p_\chi$. In particular, $\mu \ne \chi$ for all other terms, so once the existence of such a complex is established, we can conclude, by induction, that $\cO_X \otimes V(\chi)$ is generated by $\cO_X \otimes V(\alpha)$ with $\alpha \in M^+ \cap (-\rho + \delta + \frac{1}{2} \ol{\Sigma})$.

\medskip

\noindent {\bf Case 1:} $\langle \lambda, \ell \rangle < 0$. Consider the complex $D^\vee_{\lambda,\chi}$. By \autoref{prop:D_unstable}, the homology of $D^\vee_{\lambda, \chi}$ is supported in the unstable locus, and hence the restriction $D^\vee_{\lambda, \chi}|_{X^{\rm{ss}}(\ell)}$ is exact. Also, every term in $D^\vee_{\lambda, \chi}$ is a direct sum of modules of the form $\cO_X \otimes V((\chi + \beta_{i_1} + \cdots + \beta_{i_p})^+)$ with $\langle \lambda, \beta_{i_j} \rangle > 0$ and the $i_1, \dots, i_p$ are distinct.

We claim that if $\mu = (\chi + \beta_{i_1} + \cdots + \beta_{i_p})^+$ where $p>0$, then either $r_\mu < r_\chi$ or else $r_\mu=r_\chi$ and $p_\mu < p_\chi$. Using \autoref{lem:rp-chi}(a), we may replace $\mu$ with $\mu' = \chi + \beta_{i_1} + \cdots + \beta_{i_p}$. At this point, one can proceed as in \cite[\S 12.1]{SVdB}, so we omit the details, but see \autoref{fig:sym3} for a small example.

\begin{figure} 
\begin{tabular}{ll}
Case 1: use $D_{\lambda, \chi}^\vee$ & Case 2: use $C_{\lambda, \chi}$ \\
\begin{tikzpicture}[scale=.45]
\filldraw[black!30!white] (-4.5,0) -- (-3,3) -- (0,4.5) -- (4.5,4.5) -- (4.5,0) -- (3,-3) -- (0,-4.5) -- (-4.5,-4.5) -- cycle;
\filldraw[black!10!white] (-3,0) -- (-2,2) -- (0,3) -- (3,3) -- (3,0) -- (2,-2) -- (0,-3) -- (-3,-3) -- cycle;
\draw[step=1, color=gray] (-6,-5) grid (6,5);
\node[below] at (-1.5,4) {$r_\chi \ol{\Sigma}$};
\node[below] at (-1.5,2.25) {$\frac{1}{2}\ol{\Sigma}$};
\draw[<->, thick] (-6,0) -- (6,0);
\draw[<->, thick] (0,-5) -- (0,5);
\draw[dashed, thick] (2,-5) -- (6,3);
\draw[->, thick] (6,3) -- (5,3.5);
\node[above] at (5.5,3.25) {$-\lambda_2$};
\foreach \i in {(1,2), (4,2), (-1,1), (2,1), (1,-1), (4,-1), (-1,-2), (2,-2)}
{ \filldraw[red] \i circle (3pt); }
\node[red] at (4,-1) {$\times$};
\end{tikzpicture}
&
\begin{tikzpicture}[scale=.45]
\filldraw[black!30!white] (-4,0) -- (-8/3,8/3) -- (0,4) -- (4,4) -- (4,0) -- (8/3,-8/3) -- (0,-4) -- (-4,-4) -- cycle;
\filldraw[black!10!white] (-3,0) -- (-2,2) -- (0,3) -- (3,3) -- (3,0) -- (2,-2) -- (0,-3) -- (-3,-3) -- cycle;
\draw[step=1, color=gray] (-6,-5) grid (6,5);
\node[below] at (-1.5,3.75) {$r_\chi \ol{\Sigma}$};
\node[below] at (-1.5,2) {$\frac{1}{2}\ol{\Sigma}$};
\draw[<->, thick] (-6,0) -- (6,0);
\draw[<->, thick] (0,-5) -- (0,5);
\draw[dashed, thick] (-6,-4) -- (6,-4);
\draw[->, thick] (4.5,-4) -- (4.5,-3);
\node[right] at (4.5,-3.5) {$\lambda_1^\sigma$};
\foreach \i in {(2,2), (1,0), (0,1), (-1,-1), (2,-1), (0,-2), (1,-3), (-1,-4)}
{ \filldraw[red] \i circle (3pt); }
\node[red] at (-1,-4) {$\times$};
\end{tikzpicture}
\end{tabular}
\caption{\footnotesize Here we take $X = T^* {\rm Sym}^3(k^2)$ under the action of ${\bf GL}_2(k)$ with $\ell = (1,1)$. The maximal destabilizing one-parameter subgroups in $N$ are $\lambda_0 = (1,1)$, $\lambda_1 = (1,0)$, $\lambda_2 = (2,-1)$. We let $\sigma$ denote the nontrivial element of the Weyl group, so that $\lambda_i^\sigma$ also define facets of $\ol{\Sigma}$. In both cases, $\chi$ is represented by a red $\times$, and we attempt to find an acyclic complex relating $\cO_X \otimes V(\chi)$ to representations with strictly smaller $r_\chi$ and $p_\chi$. 
\emph{Left side:} We have $r_\chi = 3/4$ and $\lambda = -\lambda_2$, so $\pair{\lambda}{\ell}<0$ and the red dots show those $\mu$ such that $\cO_X \otimes V(\mu^+)$ might appear in $D^\dual_{\lambda,\chi}$. 
\emph{Right side:} We have $r_\chi = 2/3$ and $\lambda = \lambda_1^\sigma$, so $\pair{\lambda}{\ell} > 0$ and the red dots represent the weights $\mu$ for which $\cO_X \otimes V(\mu^+)$ might appear in $C_{\lambda,\chi}$. } \label{fig:sym3}
\end{figure}
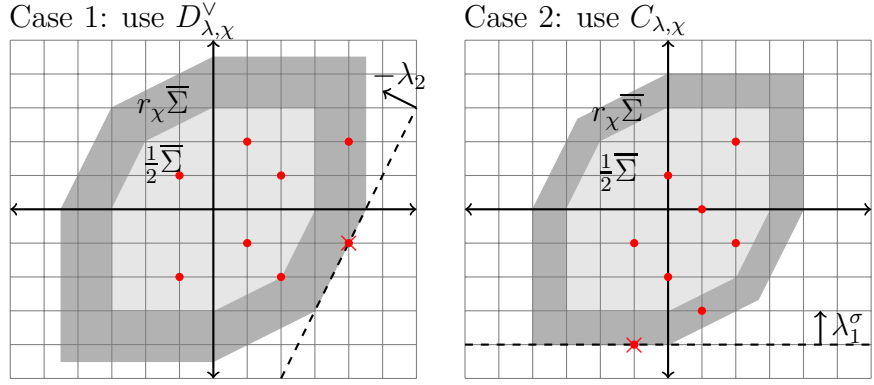

\medskip

\noindent {\bf Case 2:} $\langle \lambda, \ell \rangle > 0$. Consider the complex $C_{\lambda, \chi}$. By \autoref{prop:C_unstable}, the homology of $C_{\lambda, \chi}$ is supported in the unstable locus, and hence the restriction $C_{\lambda, \chi}|_{X^{\rm{ss}}(\ell)}$ is exact. Also, every term in $C_{\lambda, \chi}$ is a direct sum of modules of the form $\cO_X \otimes V((\chi - \beta_{i_1} - \cdots - \beta_{i_p})^+)$ with $\langle \lambda, \beta_{i_j} \rangle < 0$ and the $i_1, \dots, i_p$ are distinct. 

We claim that if $\mu = (\chi - \beta_{i_1} - \cdots - \beta_{i_p})^+$ where $p>0$, then either $r_\mu < r_\chi$ or else $r_\mu=r_\chi$ and $p_\mu < p_\chi$. Using \autoref{lem:rp-chi}(a), we may replace $\mu$ with $\mu' = \chi - \beta_{i_1} - \cdots - \beta_{i_p}$. The idea is similar to \cite[\S 12.1]{SVdB}, but we have to modify certain points. For clarity, we provide the full details of the proof of the claim.

Write $\chi = -\rho + \sum_i a_i \beta_i + \delta$ where exactly $p_\chi$ of the $a_i$ are equal to $-r_\chi$. We have
\[
\mu'=-\rho+\sum_i a'_i\beta_i + \delta,
\qquad \qquad a'_i=
\begin{cases}
a_i & \text{if } i\notin \{i_1, \dots, i_p\}\\
a_i-1 & \text{if }i \in \{i_1, \dots, i_p\}
\end{cases}.
\]
As in \cite[\S 12.1]{SVdB}, the main idea is to redistribute the coefficients of this expression in such a way that it is manifestly clear that either $r_{\mu'} < r_\chi$ or that $r_{\mu'}=r_\chi$ and $p_{\mu'} < p_\chi$. In order to do this and take advantage of the quasi-symmetric assumption on $X$, we rewrite this expression as
\[
\mu'= -\rho+\sum_L(\sum_{\beta_i\in L \setminus 0} a_i'\beta_i) +\delta
\]
where the sum is over all lines $L \subset M_\bR$ through the origin. We now consider each expression $\sum_{\beta_i \in L} a'_i \beta_i$ for a fixed line $L$. Define \begin{align*}
S^L &= \{i_1, \dots, i_p \} \cap \{i \mid \beta_i \in L\},\\
T^L &= \{i \mid \langle \lambda, \beta_i \rangle > 0,\ \beta_i \in L\},\\
U^L &= \{i \mid \langle \lambda, \beta_i \rangle < 0,\ \beta_i \in L\}.
\end{align*}

If $S^L = \emptyset$ then $a'_i=a_i$ for all $i$ such that $\beta_i\in L$ and hence $0 \ge a'_i \ge -r_\chi$, so we will leave the sum $\sum_{\beta_i \in L} a'_i \beta_i$ alone. For the remainder of the proof, we consider the case that $S^L \neq\emptyset$. In particular, $\lambda$ does not vanish on $L$. Let $\gamma$ be the unique vector on $L$ defined by $\langle \lambda, \gamma \rangle = 1$. For $\beta_i\in L$, we have $\beta_i= \langle \lambda, \beta_i \rangle \gamma$. Define two numbers
\begin{align*}
\alpha &= -\sum_{i\in S^L} \langle \lambda, \beta_i \rangle - \sum_{j\in T^L} r_{\chi}\langle \lambda, \beta_j \rangle, \qquad c =\sum_{i\in T^L} \langle \lambda, \beta_i \rangle.
\end{align*}

\begin{lem} \label{lem:alpha-ineq}
\begin{enumerate}[\rm (a)]
\item $c>0$.
\item $r_\chi > \frac{\alpha}{c} > -r_\chi$.
\item $\displaystyle \sum_{\beta_i\in L} a'_i\beta_i = \sum_{i\in T^L} \frac{\alpha}{c} \beta_i = \sum_{i\in U^L} -\frac{\alpha}{c} \beta_i$.
\end{enumerate}
\end{lem}

\begin{proof}
(a) $\langle \lambda, \beta_i \rangle > 0$ if $i \in T^L$ and $T^L \ne \emptyset$ (since $S^L \ne \emptyset$ and using quasi-symmetry).

(b) Since $c>0$, it suffices to show that $r_\chi c > \alpha > -r_\chi c$. For the inequality $\alpha > -r_\chi c$, use that $-\langle \lambda, \beta_i \rangle > 0$ for $i\in S^L$, and that $S^L \ne \emptyset$. For the inequality $r_\chi c > \alpha$, we have
\begin{align*}
r_\chi c = \sum_{i \in T^L} r_\chi \langle \lambda, \beta_i \rangle > \sum_{i \in T^L} (1-r_\chi) \langle \lambda, \beta_i \rangle = -\sum_{i \in U^L} \langle \lambda, \beta_i \rangle - \sum_{j \in T^L} r_\chi \langle \lambda, \beta_j \rangle \ge \alpha.
\end{align*}
In the first inequality, we used that $r_\chi > \frac{1}{2}$ implies that $r_\chi > 1-r_\chi$. The second equality uses the quasi-symmetric condition, which translates to $-\sum_{i \in U^L} \langle \lambda, \beta_i \rangle = \sum_{j \in T^L} \langle \lambda, \beta_j \rangle$. The last inequality uses that $S^L \subseteq U^L$ and that $-\langle \lambda, \beta_i \rangle > 0$ for $i \in U^L$. 

(c) By \autoref{lem:rp-chi}, we have $a_i =  0$ if $i \in U^L$ and $a_i = -r_\chi$ if $i \in T^L$. This implies that $\sum_{\beta_i \in L} a'_i \beta_i = \alpha \gamma$. Now continue using the definition of $\alpha$ and $c$:
\[
\alpha \gamma = \frac{\alpha}{c} c \gamma = \sum_{i \in T^L} \frac{\alpha}{c} \langle \lambda, \beta_i \rangle \gamma = -\sum_{i \in U^L} \frac{\alpha}{c} \beta_i.
\]
The equality $\sum_{i \in T^L} \beta_i = -\sum_{j \in U^L} \beta_j$ follows from the fact that $X$ is quasi-symmetric.
\end{proof}

Using \autoref{lem:alpha-ineq}, we can rewrite $\sum_{\beta_i \in L} a'_i \beta_i$ as a sum where the coefficients are in the half-open interval $(-r_\chi, 0]$ (which expression is used  depends on whether $\alpha \ge 0$ or $\alpha \le 0$). Doing all of these rewrites, we end up with an expression for $\mu'$ with coefficients in $[-r_\chi,0]$ which implies that $r_{\mu'} \le r_\chi$. If the inequality is strict, we're done. Otherwise, note that there is at least one line $L$ such that $S^L \ne \emptyset$. In such a line, we have removed all terms that have coefficient $-r_\chi$ (and there is at least one such term since $T^L \ne \emptyset$). In particular, $p_{\mu'} < p_\chi$.
\end{proof}

\section{Equivalences of derived categories, tilting bundles, and algebraic K-theory}
\label{sect:equivalences}

Throughout this section we fix a quasi-symmetric representation $X$ of a split reductive group $G$, and let $\ol{\Sigma} \subset M_\bR$ denote the zonotope associated to the character of $X$. The first consequence of \autoref{thm:magic_windows} is the following:

\begin{cor} \label{cor:equivalences}
If $(M_\bR^W)_{\ol{\Sigma}\text{-{\rm gen}}} \neq \emptyset$, then for any $\ell, \ell', \delta \in M_\bR^W$ such that $X^{\rm{ss}}(\ell) = X^{\rm{s}}(\ell)$, $X^{\rm{ss}}(\ell') = X^{\rm{s}}(\ell')$, and $\partial(\delta + \ol{\nabla}) \cap M = \emptyset$, one has an equivalence of derived categories
\[
F_{\ell, \ell', \delta} \colon \xymatrix{ {\rm D}^b(X^{\rm{ss}}(\ell) / G) \ar[rr]^-{\op{res}_{X^{\rm{ss}}(\ell)}^{-1}} & & \cM(\delta + \ol{\nabla}) \ar[rr]^-{\op{res}_{X^{\rm{ss}}(\ell')}} & & {\rm D}^b(X^{\rm{ss}}(\ell')/G)}.
\]
which is linear over $\cO_X^G$ and restricts to the identity morphism over the preimage of the open subset of stable points $X^{s}(0) \git G \subset \Spec(\cO_X^G)$.
\end{cor}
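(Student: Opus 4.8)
The statement has three parts: that $F_{\ell,\ell',\delta}$ is a well-defined equivalence, that it is $\cO_X^G$-linear, and that it restricts to the identity over the stable locus. The first two are immediate from what has already been set up, so the plan is to dispatch them quickly and concentrate on the third.

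\emph{Existence and linearity.} Since $(M_\bR^W)_{\ol{\Sigma}\text{-{\rm gen}}}\neq\emptyset$, $\partial(\delta+\ol{\nabla})\cap M=\emptyset$, and $X^{\rm{ss}}(\ell)=X^{\rm{s}}(\ell)$ as well as $X^{\rm{ss}}(\ell')=X^{\rm{s}}(\ell')$, \autoref{thm:magic_windows} gives that both $\op{res}_{X^{\rm{ss}}(\ell)}$ and $\op{res}_{X^{\rm{ss}}(\ell')}$ are equivalences of dg-categories, so $F_{\ell,\ell',\delta}=\op{res}_{X^{\rm{ss}}(\ell')}\circ\op{res}_{X^{\rm{ss}}(\ell)}^{-1}$ is an equivalence. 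Each $\op{res}$ is pullback along an open immersion $X^{\rm{ss}}(\ell)/G\hookrightarrow X/G$, which is a morphism over $\Spec(\cO_X^G)$; such a pullback is $\cO_X^G$-linear, and a composite of $\cO_X^G$-linear functors together with the quasi-inverse of one is again $\cO_X^G$-linear.

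\emph{The common open substack.} The heart of the last clause is that, over the open subscheme $U:=X^{\rm{s}}(0)\git G\subseteq\Spec(\cO_X^G)$, the three stacks $X/G$, $X^{\rm{ss}}(\ell)/G$, $X^{\rm{ss}}(\ell')/G$ all restrict to the same open substack $V:=X^{\rm{s}}(0)/G$. I would establish this in two steps. First, $X^{\rm{s}}(0)\subseteq X^{\rm{ss}}(\ell)$ for every $\ell$: a point $x\in X^{\rm{s}}(0)$ has closed $G$-orbit with finite stabilizer, so any nontrivial one-parameter subgroup $\lambda$ with $\lim_{t\to0}\lambda(t)\cdot x$ existing would have this limit in $\overline{Gx}=Gx$ and fixed by $\lambda$, forcing $\lambda(\bG_m)$ into a conjugate of the finite group $G_x$, a contradiction; by the Hilbert--Mumford criterion $x$ is then semistable for every $\ell$. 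Second, if $\overline{Gx}$ meets $X^{\rm{s}}(0)$ then $Gx\subseteq X^{\rm{s}}(0)$, since the unique closed orbit inside $\overline{Gx}$ would be an orbit of full dimension $\dim G$, which cannot lie in $\overline{Gx}\setminus Gx$, hence must equal $Gx$. Together these show that the preimage of $U$ under $X\to\Spec(\cO_X^G)$ is exactly $X^{\rm{s}}(0)$, so the preimage of $U$ in $X/G$, in $X^{\rm{ss}}(\ell)/G$, and in $X^{\rm{ss}}(\ell')/G$ is in each case the open substack $V=X^{\rm{s}}(0)/G$, and the open immersions $V\hookrightarrow X^{\rm{ss}}(\ell)/G\hookrightarrow X/G$ and $V\hookrightarrow X^{\rm{ss}}(\ell')/G\hookrightarrow X/G$ are all restrictions of $X^{\rm{s}}(0)\hookrightarrow X$.

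\emph{The diagram chase, and the main obstacle.} Write $(-)|_V$ for restriction to $V$ from whichever of the three categories is in sight. Because the composites $V\hookrightarrow X^{\rm{ss}}(\ell)/G\hookrightarrow X/G$ and $V\hookrightarrow X^{\rm{ss}}(\ell')/G\hookrightarrow X/G$ both equal the open immersion $V\hookrightarrow X/G$, we get equalities of functors $\cM(\delta+\ol{\nabla})\to{\rm D}^b(V)$, namely $(-)|_V\circ\op{res}_{X^{\rm{ss}}(\ell)}=(-)|_V=(-)|_V\circ\op{res}_{X^{\rm{ss}}(\ell')}$, each being restriction from $X/G$ to $V$. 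Hence for $A\in{\rm D}^b(X^{\rm{ss}}(\ell)/G)$ and $E:=\op{res}_{X^{\rm{ss}}(\ell)}^{-1}(A)\in\cM(\delta+\ol{\nabla})$ one has $F_{\ell,\ell',\delta}(A)|_V=\op{res}_{X^{\rm{ss}}(\ell')}(E)|_V=E|_V=\op{res}_{X^{\rm{ss}}(\ell)}(E)|_V=A|_V$, naturally in $A$; this is precisely the assertion that $F_{\ell,\ell',\delta}$ restricts over the preimage $V$ of $U$ to the identity functor of ${\rm D}^b(V)$. I expect no real obstacle here: all of the mathematical content is already in \autoref{thm:magic_windows} (which is where $X^{\rm{ss}}=X^{\rm{s}}$ enters), and the only non-formal step is the GIT bookkeeping identifying the common open substack $V$ in the previous paragraph, which is standard.
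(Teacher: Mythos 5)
Your proof is correct and follows essentially the same route as the paper's: the equivalence and the $\cO_X^G$-linearity are formal consequences of \autoref{thm:magic_windows}, and the identity-over-the-stable-locus claim is the same cancellation $\op{res}_{X^{\rm s}(0)}\circ\op{res}_{X^{\rm{ss}}(\ell')}=\op{res}_{X^{\rm s}(0)}=\op{res}_{X^{\rm s}(0)}\circ\op{res}_{X^{\rm{ss}}(\ell)}$ applied to $\op{res}_{X^{\rm{ss}}(\ell)}^{-1}$. The only difference is that you spell out the (correct, standard) GIT bookkeeping showing $X^{\rm s}(0)$ lies in every $X^{\rm{ss}}(\ell)$ and equals the full preimage of $X^{\rm s}(0)\git G$, a point the paper simply asserts.
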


\begin{proof}
The fact that $F_{\ell,\ell',\delta}$ is an $\cO_X^G$-linear functor follows from the fact that the restriction functor $\op{res}_{X^{\rm{ss}}(\ell)} \colon \cM(\delta+\ol{\nabla}) \to {\rm D}^b(X^{\rm{ss}}(\ell)/G)$ is canonically $\cO_X^G$-linear, and thus so is its inverse. By the fact that $F_{\ell,\ell',\delta}$ restricts to the identity functor over $X^{\rm s}(0)/G$, which is contained in both $X^{\rm{ss}}(\ell)/G$ and $X^{\rm{ss}}(\ell')/G$, we mean that $\op{res}_{X^{\rm s}(0)} \circ F_{\ell,\ell',\delta} \simeq \op{res}_{X^{\rm s}(0)}$. This is evident from the canonical equivalences
\[
\op{res}_{X^{\rm s}(0)} \circ F_{\ell,\ell',\delta} \simeq \op{res}_{X^{\rm s}(0)} \circ \op{res}_{X^{\rm{ss}}(\ell)}^{-1} \simeq \op{res}_{X^{\rm s}(0)} \circ \op{res}_{X^{\rm{ss}}(\ell)} \circ \op{res}_{X^{\rm{ss}}(\ell)}^{-1}. \qedhere
\]
\end{proof}

In fact, we can say more about the structure of these equivalences. For any $\ell,\delta \in M_\bR^W$, we consider the following locally free sheaf
\[
\cU_{\ell,\delta} := \bigoplus_{\substack{U \in \Irr(G)\\ \op{Char}(U) \subseteq \delta + \ol{\nabla}}} \cO_X \otimes U|_{X^{\rm{ss}}(\ell)} \in {\rm D}^b(X^{\rm{ss}}(\ell)/G).
\]

\begin{cor}\label{cor:tilting}
If $(M_\bR^W)_{\ol{\Sigma}\text{-{\rm gen}}} \neq \emptyset$, then for any $\ell, \delta \in M_\bR^W$ with $X^{\rm{ss}}(\ell)=X^{\rm{s}}(\ell)$ and $\partial(\delta + \ol{\nabla}) \cap M = \emptyset$, the vector bundle $\cU_{\ell,\delta}$ is a tilting generator for ${\rm D}^b(X^{\rm{ss}}(\ell)/G)$. The equivalence $F_{\ell, \ell', \delta}$ maps $\cU_{\ell,\delta}$ to $\cU_{\ell',\delta}$.
\end{cor}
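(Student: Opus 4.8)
The plan is to deduce everything formally from \autoref{thm:magic_windows}, since that theorem already contains all the geometric content. Write $\cU^{\rm univ}_\delta := \bigoplus \cO_X \otimes U \in {\rm D}^b(X/G)$ for the direct sum over those $U \in \Irr(G)$ with $\op{Char}(U) \subseteq \delta + \ol{\nabla}$. Because $(M_\bR^W)_{\ol{\Sigma}\text{-{\rm gen}}} \neq \emptyset$ and $X^{\rm{ss}}(\ell) = X^{\rm{s}}(\ell)$, the zonotope $\ol{\Sigma}$ spans $M_\bR$ and $\ol{\nabla}$ is compact (its facets are parallel to those of $\ol{\Sigma}$ by \autoref{cor:parallel-facets}), so $\delta + \ol{\nabla}$ contains only finitely many dominant weights and the sum defining $\cU^{\rm univ}_\delta$ is finite; thus $\cU^{\rm univ}_\delta$ is an honest object of $\cM(\delta + \ol{\nabla})$, and since $\cM(-)$ is by definition split-generated by the bundles $\cO_X \otimes U$, the single object $\cU^{\rm univ}_\delta$ split-generates $\cM(\delta + \ol{\nabla})$. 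By construction $\op{res}_{X^{\rm{ss}}(\ell)}(\cU^{\rm univ}_\delta) = \cU_{\ell,\delta}$.

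First I would verify that $\cU_{\ell,\delta}$ is a generator. By \autoref{thm:magic_windows} the functor $\op{res}_{X^{\rm{ss}}(\ell)}\colon \cM(\delta+\ol{\nabla}) \to {\rm D}^b(X^{\rm{ss}}(\ell)/G)$ is an equivalence, so $\cU_{\ell,\delta}$ split-generates ${\rm D}^b(X^{\rm{ss}}(\ell)/G)$; as the bounded derived category of a Noetherian Deligne--Mumford stack is idempotent complete, this makes $\cU_{\ell,\delta}$ a classical generator.

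Next I would check the $\op{Ext}$-vanishing. Full faithfulness of $\op{res}_{X^{\rm{ss}}(\ell)}$ (the first assertion of \autoref{thm:magic_windows}) gives, for any $G$-representations $U,U'$,
\[
\RHom_{X^{\rm{ss}}(\ell)/G}\big(\cO_X\otimes U|_{X^{\rm{ss}}(\ell)},\ \cO_X\otimes U'|_{X^{\rm{ss}}(\ell)}\big) \;\simeq\; \RHom_{X/G}(\cO_X\otimes U,\ \cO_X\otimes U') \;\simeq\; {\rm R}\Gamma\big(X/G,\ \cO_X\otimes U^\dual\otimes U'\big).
\]
Since $X$ is affine and $G$ is reductive in characteristic $0$, taking global sections and then $G$-invariants is exact, so the right-hand side is concentrated in cohomological degree $0$; summing over the finitely many $U,U'$ occurring in $\cU_{\ell,\delta}$ gives $\op{Ext}^{>0}_{X^{\rm{ss}}(\ell)/G}(\cU_{\ell,\delta},\cU_{\ell,\delta})=0$ and shows moreover that $\op{End}(\cU_{\ell,\delta})$ is a module-finite $\cO_X^G$-algebra. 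Together with generation this says $\cU_{\ell,\delta}$ is a tilting generator (and, $X^{\rm{ss}}(\ell)/G$ being smooth and projective over $\Spec\cO_X^G$, $\op{End}(\cU_{\ell,\delta})$ automatically has finite global dimension, so $\RHom(\cU_{\ell,\delta},-)$ realizes the resulting equivalence with the derived category of its modules).

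Finally, the compatibility with the equivalence of \autoref{cor:equivalences} is immediate: from $\op{res}_{X^{\rm{ss}}(\ell)}(\cU^{\rm univ}_\delta) = \cU_{\ell,\delta}$ we get $\op{res}_{X^{\rm{ss}}(\ell)}^{-1}(\cU_{\ell,\delta}) \simeq \cU^{\rm univ}_\delta$, hence
\[
F_{\ell,\ell',\delta}(\cU_{\ell,\delta}) \;=\; \op{res}_{X^{\rm{ss}}(\ell')}\big(\op{res}_{X^{\rm{ss}}(\ell)}^{-1}(\cU_{\ell,\delta})\big) \;\simeq\; \op{res}_{X^{\rm{ss}}(\ell')}(\cU^{\rm univ}_\delta) \;=\; \cU_{\ell',\delta}.
\]
I do not anticipate a real obstacle: all of the weight combinatorics has been absorbed into \autoref{thm:magic_windows}. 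The only points needing a little care are recording that $\ol{\nabla}$ is compact, so that $\cU_{\ell,\delta}$ has finite rank, and the (harmless) passage from split-generation to classical generation via idempotent completeness of ${\rm D}^b(X^{\rm{ss}}(\ell)/G)$.
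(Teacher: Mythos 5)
Your proposal is correct and follows essentially the same route as the paper: generation comes from the essential surjectivity in \autoref{thm:magic_windows}, and the vanishing of higher self-extensions comes from full faithfulness together with the affineness of $X$ and reductivity of $G$. Your additional remarks (finiteness of the sum via compactness of $\ol{\nabla}$, and the explicit verification that $F_{\ell,\ell',\delta}$ sends $\cU_{\ell,\delta}$ to $\cU_{\ell',\delta}$) are accurate elaborations of points the paper leaves implicit.
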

\begin{proof}
The fact that $\cU_{\ell,\delta}$ is a generator follows from the essential surjectivity in \autoref{thm:magic_windows}. The fact that it is a tilting bundle (meaning it has no higher self-extensions) follows from the fully faithfulness in \autoref{thm:magic_windows} and the fact that $X$ is affine and $G$ is reductive so there are no higher self-extensions in ${\rm D}^b(X/G)$.
\end{proof}

\begin{rem}
The fact that $\cU_{\ell,\delta}$ is a tilting generator for ${\rm D}^b(X^{\rm{ss}}(\ell)/G)$ implies that this category is equivalent to $\Lambda\text{-Mod}$, where $\Lambda = \op{Hom}_{X/G}(\cU_{\ell,\delta}, \cU_{\ell,\delta})$ is precisely the non-commutative $\cO_X^G$-algebra which was shown to be a non-commutative resolution of $\Spec(\cO_X^G)$ in \cite{SVdB}. Thus in the quasi-symmetric case, we have shown that their non-commutative resolutions are in fact \emph{commutative}.\end{rem}

\begin{rem} [Fourier--Mukai kernels]
Let $\delta$,$\ell$, and $\ell'$ be as in \autoref{cor:tilting} and \autoref{cor:equivalences}. \autoref{thm:magic_windows} implies that $\Lambda = \op{End}_{X/G}(\cU_{\ell,\delta}) = \op{End}_{X/G}(\cU_{\ell',\delta})$. We regard $\cU_{\ell',\delta}$ as an equivariant left $\cO_{X^{\rm{ss}}(\ell')} \otimes_k \Lambda$-module, and $\RHom_{X^{\rm{ss}}(\ell)/G}(\cU_{\ell,\delta},E)$ as a complex of right $\Lambda$-modules for any $E \in {\rm D}^b(X^{\rm{ss}}(\ell)/G)$. \autoref{cor:tilting} allows one to express the derived equivalence $F_{\ell,\ell',\delta}$ of \autoref{cor:equivalences} via the explicit formula
\[
F_{\ell,\ell',\delta}(E) = \cU_{\ell',\delta} \otimes_\Lambda \RHom_{X^{\rm{ss}}(\ell)/G}(\cU_{\ell,\delta},E).
\]
Regarding the dual vector bundle $\cU_{\ell,\delta}^\ast$ as an equivariant right $\cO_{X^{\rm{ss}}(\ell)} \otimes_k \Lambda$-module, the Fourier--Mukai kernel for $F_{\ell,\ell',\delta}$ is the derived tensor product $\cU_{\ell,\delta}^\ast \otimes_\Lambda \cU_{\ell',\delta} \in {\rm D}^b(X^{\rm{ss}}(\ell) / G \times X^{\rm{ss}}(\ell') / G)$.

For an alternative description of the Fourier--Mukai kernel: Let $\Delta \colon X^{\rm{ss}}(\ell) / G \to X^{\rm{ss}}(\ell) / G \times X^{\rm{ss}}(\ell)/G$ be the diagonal morphism. It is finite if $X^{\rm{ss}}(\ell) = X^{\rm{s}}(\ell)$. Therefore $\cF := \Delta_\ast(\cO_{X^{\rm{ss}}(\ell)})$ is a coherent sheaf, and applying \autoref{thm:magic_windows} to $X\times X / G \times G$ provides an algorithm to extend this uniquely and functorially to a complex $\tilde{\cF} \in \cM((\delta,\delta) + \bar{\nabla} \times \bar{\nabla}) \subset {\rm D}^b(X \times X / G\times G)$. The restriction of $\tilde{\cF}$ to $X^{\rm{ss}}(\ell) \times X^{\rm{ss}}(\ell')$ is the Fourier--Mukai kernel for $F_{\ell,\ell',\delta}$ (see \cite[\S 2.3]{HL}).
\end{rem}

\subsubsection{The non-local version} \label{sect:non_local}


Via the Luna slice theorem, our main theorem generalizes to a ``global'' version. This will be discussed in detail in forthcoming work of the first author with D. Maulik and A. Okounkov. Consider a smooth $G$-variety $X$ such that $X/G$ admits a good quotient $X/\!/G$ and which is locally quasi-symmetric in the sense that for any $x \in X$ with a closed orbit, the tangent space $T_x X$ is a quasi-symmetric representation of the stabilizer group $\op{Stab}_G(x)$. Then for any line bundle $\ell \in \op{Pic}_G(X)_\bR$ one can define a GIT-semistable locus $X^{\rm{ss}}(\ell)$ using the Hilbert--Mumford criterion, and $X^{\rm{ss}}(\ell)/G$ admits a good quotient $X^{\rm{ss}}(\ell)/\!/G$ which is projective over $X/\!/G$. Then the global version of \autoref{cor:equivalences} provides derived equivalences ${\rm D}^b(X^{\rm{ss}}(\ell)/G) \to {\rm D}^b(X^{\rm{ss}}(\ell')/G)$ for any two $\ell,\ell' \in \op{Pic}_G(X)_\bR$ satisfying a suitable genericity hypothesis. This set up includes many known examples of derived equivalences arising from variation of GIT quotient, including the ``balanced wall crossings'' studied in \cite{HL}, and in particular all derived equivalences arising from a generic variation of GIT quotient for the action of a torus on a smooth projective-over-affine variety.

\subsection{Bases in $K$-theory}

Next we observe that \autoref{thm:magic_windows} can be used to describe a basis for the $K$-theory of the quotient stack $X^{\rm{ss}}/G$. Because the action of $\bG_m$ on $X$ by scaling commutes with the action of $G$, the open $G$-semistable locus $X^{\rm{ss}}(\ell) \subset X$ is $G \times \bG_m$ equivariant. For any locally free sheaf $E$ on a stack $\cX$, we let $[E]$ denote the corresponding class in $K_0(\Perf(\cX))$. Note that $K_0(\Perf(X^{\rm{ss}}(\ell)/G\times \bG_m))$ is a module over $K_0(\Rep(\bG_m)) \cong \bZ[t^\pm]$ via tensor product with the tautological character of $\bG_m$.

\begin{prop} \label{prop:K_theory}
With notation as above, if $(M_\bR^W)_{\ol{\Sigma}\text{-{\rm gen}}} \neq \emptyset$ and $\ell,\delta \in \op{Pic}(BG)_\bR$ with $X^{\rm{ss}}(\ell) = X^{\rm{s}}(\ell)$ and $\partial(\delta + \ol{\nabla}) = \emptyset$, then $K_i(\Perf(X^{\rm{ss}}(\ell)/G))$ {\rm (}respectively, $K_i(\Perf(X^{\rm{ss}}(\ell) / G\times \bG_m))${\rm )} vanishes for $i \neq 0$, and for $i=0$ is a free $\bZ$-module {\rm (}respectively, is a free $\bZ[t^\pm]$-module{\rm )} with basis
\[
\{ [\cO_X \otimes U] \mid U \in \Irr(G) \text{ and } \op{Char}(U) \subseteq \delta + \ol{\nabla} \}.
\]
When the base field is $k = \bC$, then the same statements hold for topological $K$-theory {\rm (}of the analytification{\rm )}, and the orbifold Chern characters $\op{ch}([\cO_X \otimes U])$ provide a basis for the rational cohomology of the inertia stack of $X^{\rm{ss}}(\ell)/G$.
\end{prop}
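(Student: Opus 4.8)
The plan is to bootstrap everything from \autoref{thm:magic_windows} by a retraction argument in $K$-theory. By \autoref{thm:magic_windows} the restriction functor identifies $\cM(\delta+\ol{\nabla})$ with ${\rm D}^b(X^{\rm ss}(\ell)/G)=\Perf(X^{\rm ss}(\ell)/G)$ (the second equality because $X^{\rm ss}(\ell)/G$ is a smooth Deligne--Mumford stack), while \autoref{lem:fully_faithful} together with the essential surjectivity in \autoref{thm:magic_windows} forces $\cM(\delta+\ol{\nabla})=\cG^{w}\subseteq {\rm D}^b(X/G)=\Perf(X/G)$ for a suitable weight vector $w$, a piece of the semiorthogonal decomposition of $\Perf(X/G)$ underlying the window theorem of \cite{HL}. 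In particular the composite $\cM(\delta+\ol{\nabla})\hookrightarrow \Perf(X/G)\xrightarrow{\op{res}}\Perf(X^{\rm ss}(\ell)/G)$ is an equivalence, so on $K$-theory it realizes $K_\ast(\Perf(X^{\rm ss}(\ell)/G))$ as a direct summand of $K_\ast^G(X):=K_\ast(\Perf(X/G))$, the complement being the $K$-theory of the remaining (explicit) pieces of the decomposition.

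I would first settle $K_0$. Since $X$ is a linear representation it is $G$-equivariantly contractible, so homotopy invariance gives $K_0^G(X)\cong R(G)$, the representation ring, a free $\bZ$-module on $\Irr(G)$ (using that $G$ is split), and under this isomorphism $[\cO_X\otimes U]\mapsto[U]$. The classes $\{[\cO_X\otimes U]\mid\op{Char}(U)\subseteq\delta+\ol{\nabla}\}$ generate $K_0(\cM(\delta+\ol{\nabla}))$ because they generate the triangulated category, and $K_0(\cM(\delta+\ol{\nabla}))\to K_0^G(X)$ is injective because its composite with $K_0^G(X)\to K_0^G(X^{\rm ss}(\ell))=K_0(\Perf(X^{\rm ss}(\ell)/G))$ is an isomorphism. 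Hence these classes go to distinct members of the standard basis of $R(G)$, so they are $\bZ$-independent, and being generators they form a basis; transporting back along the equivalence gives the $K_0$-statement. For $G\times\bG_m$ the same argument applies once one observes that the acyclic complexes $C_{\lambda,\chi},D_{\lambda,\chi}^{\vee}$ used in the proof of \autoref{thm:magic_windows} are built from $G$-equivariant data on the affine space $X$ and are hence automatically $\bG_m$-equivariant for the scaling action; thus $\cM(\delta+\ol{\nabla})$ and its twists $\cO_X\otimes U\otimes t^{n}$ generate $\Perf(X^{\rm ss}(\ell)/G\times\bG_m)$, and since $K_0^{G\times\bG_m}(X)\cong R(G)\otimes_{\bZ}\bZ[t^{\pm}]$ with $[\cO_X\otimes U\otimes t^{n}]\mapsto t^{n}[U]$, the identical independence-plus-generation argument identifies $K_0$ with the free $\bZ[t^{\pm}]$-module on $\{[\cO_X\otimes U]\}$.

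For the higher $K$-groups I would run the same retraction at the level of $K$-theory spectra: the semiorthogonal decomposition of $\Perf(X/G)$ splits $K(\cM(\delta+\ol{\nabla}))$ off $K^G(X)$ as a direct summand, and the $K_0$-computation pins down exactly which summand it is, propagating the vanishing of $K_i$ for $i\ne0$ and the module structure in degree $0$. Matching the spectrum-level splitting with the combinatorial window decomposition is the step I expect to require the most care. It is cleanest over $k=\bC$ after analytification, where equivariant topological $K$-theory of the contractible space $X^{\rm an}$ vanishes in odd degree and equals $R(G)$ in degree $0$: the Atiyah--Segal localization sequence, together with the comparison map from algebraic $K$-theory, then propagates the window splitting and forces $K^1_{\mathrm{top}}(\cX)=0$ and $K^0_{\mathrm{top}}(\cX)$ to be free on the restrictions $\{[\cO_X\otimes U|_{X^{\rm ss}}]\}$, where $\cX:=X^{\rm ss}(\ell)/G$.

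Finally, for the orbifold cohomology statement I would apply the orbifold (Chen--Ruan) Chern character, which for a smooth complex Deligne--Mumford stack $\cX$ is a ring isomorphism $\op{ch}\colon K^0_{\mathrm{top}}(\cX)\otimes_{\bZ}\bQ\xrightarrow{\sim}H^{\mathrm{even}}(I\cX;\bQ)$ onto the even rational cohomology of the inertia stack $I\cX$, with $H^{\mathrm{odd}}(I\cX;\bQ)\cong K^1_{\mathrm{top}}(\cX)\otimes\bQ=0$. Applying $\op{ch}$ to the $\bZ$-basis of $K^0_{\mathrm{top}}(\cX)$ produced above then gives the asserted $\bQ$-basis of $H^\ast(I\cX;\bQ)$. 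Apart from the spectrum-level bookkeeping flagged above, every step is formal once \autoref{thm:magic_windows} --- and hence the identification \autoref{cor:tilting} of $\cU_{\ell,\delta}$ as a tilting generator whose endomorphism algebra is the {\v S}penko--Van den Bergh noncommutative resolution --- is in hand.
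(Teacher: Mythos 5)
Your $K_0$ argument is sound and not far from the paper's in spirit: generation of $\cM(\delta+\ol{\nabla})$ by the classes $[\cO_X\otimes U]$ plus injectivity of $K_0(\cM(\delta+\ol{\nabla}))\to K_0^G(X)\cong R(G)$ (via the window equivalence) does give the stated $\bZ$-basis, and the $\bZ[t^\pm]$-variant follows similarly. But there is a genuine gap in everything beyond degree $0$. Realizing $K(\cM(\delta+\ol{\nabla}))$ as a direct summand of $K^G(X)\simeq K(\Rep(G))$ cannot ``propagate the vanishing of $K_i$ for $i\neq 0$'': the ambient spectrum does not vanish in higher degrees (already $K_1(\Rep(G))\supseteq k^\times\otimes R(G)$), and knowing which summand of $K_0$ you have does not determine the higher homotopy of the summand, since the complementary pieces of the semiorthogonal decomposition of ${\rm D}^b(X/G)$ from \cite{HL} are derived categories of unstable strata with their own nontrivial higher $K$-theory. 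The structural input you are missing is the one the paper's proof turns on: the objects $\cO_X\otimes U\langle n\rangle$ form a \emph{full exceptional collection} in ${\rm D}^b(X^{\rm{ss}}(\ell)/G\times\bG_m)$, because $\RHom(\cO_X\otimes U\langle m\rangle,\cO_X\otimes V\langle n\rangle)$ vanishes for $n>m$ and equals $(\op{Sym}^{m-n}(X^\dual)\otimes U^\dual\otimes V)^G$ otherwise. This semiorthogonality is what computes the $K$-theory spectrum outright (and, via Blanc's topological $K$-theory of dg-categories and \cite{HLPomerleano}, the topological and inertia-cohomology statements); it is invisible without the auxiliary scaling $\bG_m$, since $\RHom(\cO_X\otimes U,\cO_X\otimes V)=(\op{Sym}(X^\dual)\otimes U^\dual\otimes V)^G$ is nonzero in both directions.

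Relatedly, your order of reduction is backwards relative to what the argument needs. The paper proves the $G\times\bG_m$-equivariant statement first and deduces the plain one by the base change $K_\bullet(\cdot/G)\cong K_\bullet(\cdot/G\times\bG_m)\otimes^{\rm L}_{\bZ[t^\pm]}\bZ[t^\pm]/(t-1)$; you treat the $\bG_m$-equivariant case as an afterthought obtained by noting that the complexes $C_{\lambda,\chi}$ and $D^\vee_{\lambda,\chi}$ are automatically $\bG_m$-equivariant. That observation only addresses generation, not the exceptionality that carries the degree-$\neq 0$ and freeness claims. Your proposed route for $k=\bC$ (Atiyah--Segal plus the Chen--Ruan Chern character) could be made to work, but only after the exceptional-collection step is supplied, since it too rests on identifying the $K$-theory spectrum and not merely $K_0$.
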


\begin{proof}
The non-equivariant statement follows from the $\bG_m$-equivariant one and the fact that
\[
K_\bullet({\rm D}^b(X^{\rm{ss}}(\ell)/G)) \cong K_\bullet ({\rm D}^b(X^{\rm{ss}}(\ell) / G\times \bG_m)) \stackrel{\rm L}{\otimes}_{\bZ[t^\pm]} \bZ[t^\pm] / (t-1).
\]
So we will just prove the $\bG_m$-equivariant statement.

Let $U\langle n \rangle$ denote the representation of $G \times \bG_m$ whose restriction to $G$ is $U$ and on which $\bG_m$ acts with weight $-n$. The vector bundles $\cO_X \otimes U\langle n\rangle$ form a full exceptional collection for the category ${\rm D}^b(X / G\times \bG_m)$ since
\[
\RHom_{X/G\times\bG_m}(\cO_X \otimes U\langle m\rangle, \cO_X \otimes V\langle n\rangle) = \begin{cases} 0 & \text{if } n > m \\  
(\op{Sym}^{m-n}(X^\dual) \otimes U^\dual \otimes V)^G& \text{else} \end{cases}.
\]
The fully-faithfulness of \autoref{lem:fully_faithful} implies that the restriction functor ${\rm D}^b(X/G\times \bG_m) \to {\rm D}^b(X^{\rm{ss}}(\ell)/G\times \bG_m)$ is fully faithful when restricted to the subcategory generated by $\cO_X \otimes U\langle n\rangle$ with $\op{Char}(U) \subseteq \delta + \ol{\nabla}$. Therefore the objects $\cO_X \otimes U\langle n \rangle|_{X^{\rm{ss}}(\ell)}$ with $\op{Char}(U) \subseteq \delta + \ol{\nabla}$ form an exceptional collection in ${\rm D}^b(X^{\rm{ss}}(\ell)/G\times \bG_m)$. To show that this exceptional collection is full, it suffices to show that these objects split-generate ${\rm D}^b(X^{\rm{ss}}(\ell)/G\times \bG_m)$. The map $p \colon X^{\rm{ss}}(\ell)/G\times \bG_m \to X^{\rm{ss}}(\ell)/G$ is faithfully flat and affine, so the pushforward of a generating set is a generating set, and $p_\ast (\cO_X \otimes U) = \bigoplus_{n \in \bZ} \cO_X \otimes U\langle n \rangle$.

The existence of a full exceptional collection implies the vanishing of $K_i({\rm D}^b(X^{\rm{ss}}(\ell)/G \times \bG_m))$ for $i \neq 0$ and that the classes of the objects $[\cO_X\otimes U\langle n \rangle]$ form a basis for $K_0({\rm D}^b(X^{\rm{ss}}(\ell)/G\times \bG_m))$ as a free $\bZ$-module. Because tensoring with the tautological character of $\bG_m$ maps $\cO_X \otimes U\langle n \rangle \mapsto \cO_X \otimes U \langle n+1 \rangle$, it follows that the classes $[\cO_X \otimes U\langle 0 \rangle]$ form a basis for $K_0({\rm D}^b(X^{\rm{ss}}(\ell)/G\times \bG_m))$ as a free $\bZ[t^\pm]$-module.

\medskip
\noindent \textit{When the base field $k=\bC$}:
For both $X^{\rm{ss}}(\ell)/G$ as well as for $X^{\rm{ss}}(\ell)/G\times \bG_m$, one can recover the equivariant topological $K$-theory from the derived category via Blanc's topological $K$-theory of dg-categories \cite{HLPomerleano, Blanc}. As above, the full exceptional collection in ${\rm D}^b(X^{\rm{ss}}(\ell)/G\times\bG_m)$ implies that $K^{\rm top}({\rm D}^b(X^{\rm{ss}}(\ell)/G\times \bG_m))$ has no homology outside of degree $0$ and that the classes $[\cO_X \otimes U\langle 0 \rangle]$ form a basis for $K^{\rm top}({\rm D}^b(X^{\rm{ss}}(\ell)/G\times \bG_m)) \cong K^0_{U(1)}(X^{\rm{ss}}(\ell)/G)$ as a module over $\bZ[t^\pm]$. The claim for the non $\bG_m$-equivariant $K$-theory follows from the fact that for topological $K$-theory we have 
\[
K(X^{\rm{ss}}(\ell)/G) \cong K_{U(1)}(X^{\rm{ss}}(\ell)/G) \stackrel{\rm L}{\otimes}_{\bZ[t^\pm]}\bZ[t^\pm]/(t-1).
\]
Finally, the topological $K$-theory of a smooth and proper Deligne--Mumford stack with rational coefficients is identified with the rational cohomology of the inertia stack via the (orbifold) Chern character \cite{HLPomerleano}.
\end{proof}

\begin{rem}
In many examples where $X^{\rm{ss}}(\ell)/G$ is a scheme, it is known that this scheme has a stratification by affine spaces and thus its algebraic and topological $K$-theory are free with a generating set corresponding to the strata. \autoref{prop:K_theory} takes a very different approach, where the basis is determined by the representation theory of $G$, and it applies to a broader class of examples, including Deligne--Mumford GIT quotients.
\end{rem}

\section{Symplectic resolutions by hyperk\"ahler quotients}
\label{sect:hyperkaehler}

In this section we consider a symplectic linear representation $X$ of a reductive group $G$, which comes equipped with an algebraic $G$-equivariant moment map $\mu \colon X \to \fg^\dual$. By definition $d\mu(x)$ is a $1$-form valued in $\fg^\dual$ such that
\begin{equation}\label{eqn:moment_map}
\langle d\mu(x),\xi \rangle = \omega(-,H_\xi) \in \Gamma(X,\Omega^1_X),
\end{equation}
where $\omega$ is the symplectic form on $X$ and $H_\xi$ is the vector field generated by $\xi \in \fg$, and this defines $\mu$ up to a shift by a constant in $(\fg^\dual)^G$. We then specify $\mu$ uniquely by requiring that $\mu$ is equivariant with respect to the scaling action on $X$ and the scaling action of weight $2$ on $\fg^\dual$. We denote $X_0 := \mu^{-1}(0)$, and we consider the hyperk\"ahler quotient, which we define to be $X_0^{\rm{ss}}(\ell) / G$ for some character $\ell \in \op{Pic}(BG)_\bR \cong M_\bR^W$.

If $X^{\rm{ss}}(\ell) / G$ is Deligne--Mumford, then the moment map restricted to $X^{\rm{ss}}(\ell)$ is smooth, because \eqref{eqn:moment_map} implies that the critical points of $\mu$ are precisely those points which have positive dimensional stabilizers. Therefore $X_0^{\rm{ss}}(\ell) / G \to \Spec(\cO_{X_0}^G)$ is a smooth symplectic resolution of singularities (possibly by an orbifold). The following theorem shows that all such resolutions of $\Spec(\cO_{X_0}^G)$ resulting from a generic choice of $\ell \in M_\bR^W$ will have equivalent derived categories in this case.

\begin{thm} \label{thm:hyperkaehler}
Let $X$ be a symplectic representation of a reductive group $G$, and let $\ol{\Sigma}$ be the zonotope corresponding to the character of the linear representation $X \oplus \fg$. If $(M_\bR^W)_{\ol{\Sigma}\text{-{\rm gen}}} \neq \emptyset$, then for any $\ell, \ell' \in \op{Pic}(BG)_\bR$ such that $X_0^{\rm{ss}}(\ell) = X_0^{\rm{s}}(\ell)$ and $X_0^{\rm{ss}}(\ell') = X_0^{\rm{s}}(\ell')$ one has a derived equivalence of hyperk\"ahler quotients ${\rm D}^b(X_0^{\rm{ss}}(\ell) / G) \simeq {\rm D}^b(X_0^{\rm{ss}}(\ell')/G)$.
\end{thm}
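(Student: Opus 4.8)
\emph{Strategy.} The plan is to deduce \autoref{thm:hyperkaehler} from \autoref{thm:magic_windows} applied to the representation $X \oplus \fg$, which is quasi-symmetric (the symplectic form identifies $X \cong X^\dual$, and $\fg$ is self-dual since $G$ is reductive). One introduces the $G$-invariant superpotential $W \colon X \oplus \fg \to \mathbb{A}^1$, $W(x,\xi) = \pair{\mu(x)}{\xi}$, which is homogeneous of positive weight for the scaling $\bG_m$-action on $X\oplus\fg$ because $\mu$ has weight $2$. From \eqref{eqn:moment_map} one computes $\Crit(W) = \{(x,\xi) \mid \mu(x) = 0 \text{ and } \xi \in \op{Lie}\,\op{Stab}_G(x)\}$. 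The idea is to replace each hyperk\"ahler quotient ${\rm D}^b(X_0^{\rm ss}(\ell)/G)$ by a graded category of singularities of $W$ on the GIT quotient $(X\oplus\fg)^{\rm ss}(\ell)/G$, and then to run the magic-window argument of \autoref{thm:magic_windows} ``with a potential.''

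\emph{Reduction and critical loci.} First I would reduce to the case that $\ell$ and $\ell'$ are generic for $\ol{\Sigma}$ (the zonotope of $X\oplus\fg$): the set $\Omega_0 = \{\ell \mid X_0^{\rm ss}(\ell) = X_0^{\rm s}(\ell)\}$ is open, $X_0^{\rm ss}(\cdot)$ is locally constant on it, and $(M_\bR^W)_{\ol{\Sigma}\text{-{\rm gen}}}$ is dense (it is nonempty by hypothesis), so $\ell$ and $\ell'$ may be moved to generic points without changing $X_0^{\rm ss}$. By \autoref{prop:semistable} this makes $(X\oplus\fg)^{\rm ss}(\ell) = (X\oplus\fg)^{\rm s}(\ell)$ Deligne--Mumford, and likewise for $\ell'$. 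The point of the reduction is the following compatibility of critical loci: if $(x,\xi) \in \Crit(W)$ is $G$-semistable in $X\oplus\fg$ then $\op{Stab}_G(x,\xi)$ is finite, while $\xi$ lies in $\op{Lie}\,\op{Stab}_G(x) \cap \op{Lie}\,\op{Stab}_G(\xi)$ (the second inclusion because $[\xi,\xi]=0$), forcing $\xi = 0$; similarly, if $(x,\xi) \in \Crit(W)$ with $x \in X^{\rm ss}(\ell)$ then $\mu(x)=0$ puts $x$ in $X_0^{\rm ss}(\ell)$, which is Deligne--Mumford, again forcing $\xi = 0$. Together with the Hilbert--Mumford observation that $(x,0) \in (X\oplus\fg)^{\rm ss}(\ell) \iff x \in X^{\rm ss}(\ell)$, this shows $\Crit(W) \cap (X^{\rm ss}(\ell)\times\fg) = \Crit(W)\cap (X\oplus\fg)^{\rm ss}(\ell) = X_0^{\rm ss}(\ell)\times\{0\}$, a subset of the common open substack $(X^{\rm ss}(\ell)\times\fg) \cap (X\oplus\fg)^{\rm ss}(\ell)$.

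\emph{Kn\"orrer periodicity.} Applying derived Kn\"orrer periodicity / dimensional reduction (Isik, Shipman; Hirano's equivariant generalization) to the trivial bundle with fiber $\fg^\dual$ over $X^{\rm ss}(\ell)/G$ and its section $\mu$ gives ${\rm D}^b(X_0^{\rm ss}(\ell)/G) \simeq \op{MF}^{\bG_m}_G(X^{\rm ss}(\ell)\times\fg,\, W)$; here the derived zero locus of $\mu$ is classical because Deligne--Mumfordness of $X_0^{\rm ss}(\ell)/G$ makes $\mu$ submersive along $X_0^{\rm ss}(\ell)$. Since a category of singularities depends only on an open neighborhood of its (critical) support, the equalities of critical loci above let one restrict both $\op{MF}^{\bG_m}_G(X^{\rm ss}(\ell)\times\fg,\, W)$ and $\op{MF}^{\bG_m}_G((X\oplus\fg)^{\rm ss}(\ell),\, W)$ to $(X^{\rm ss}(\ell)\times\fg) \cap (X\oplus\fg)^{\rm ss}(\ell)$ and conclude ${\rm D}^b(X_0^{\rm ss}(\ell)/G) \simeq \op{MF}^{\bG_m}_G((X\oplus\fg)^{\rm ss}(\ell),\, W)$.

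\emph{The magic window with a potential.} Fix $\delta$ with $\partial(\delta + \ol{\nabla}) \cap M = \emptyset$, and let $\ol{\cM}$ denote the image of the $\bG_m$-equivariant magic window $\cM(\delta + \ol{\nabla}) \subset {\rm D}^b((X\oplus\fg)/(G\times\bG_m))$ (as used in \autoref{prop:K_theory}) under the natural functor to $\op{MF}^{\bG_m}_G(X\oplus\fg,\, W)$ given by restriction to $W^{-1}(0)$ followed by the Verdier quotient by perfect complexes. I claim the restriction functor $\ol{\cM} \to \op{MF}^{\bG_m}_G((X\oplus\fg)^{\rm ss}(\ell),\, W)$ is an equivalence for every generic $\ell$. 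Essential surjectivity follows from the essential-surjectivity half of \autoref{thm:magic_windows} for $X\oplus\fg$, together with the standard fact that pullback from a smooth stack to a hypersurface generates the category of singularities. For fully faithfulness one invokes the window theorem for gauged Landau--Ginzburg models (the matrix-factorization analog of \cite[Theorem 2.10]{HL}): since $W$ is $G$-invariant it has weight $0$ along each Kempf--Ness one-parameter subgroup $\lambda_i$, so the grade-restriction windows for $(X\oplus\fg,\, W)$ have exactly the widths $\eta_i = \eta_{\lambda_i}$ appearing in \autoref{lem:fully_faithful}, and the very same weight estimate shows $\ol{\cM}$ lies in such a window. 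Granting the claim, chaining the equivalences for $\ell$ and for $\ell'$ through the $\ell$-independent category $\ol{\cM}$ yields ${\rm D}^b(X_0^{\rm ss}(\ell)/G) \simeq {\rm D}^b(X_0^{\rm ss}(\ell')/G)$. The main obstacle is marshalling the two external inputs --- equivariant derived Kn\"orrer periodicity with the scaling $\bG_m$ acting on the base, and the window theorem for matrix factorizations with the correct grade-restriction widths --- and checking their compatibility with the weight estimates of \S\ref{sect:magic_windows}, along with the bookkeeping of the two commuting $\bG_m$-actions (one scaling $X\oplus\fg$ for the exceptional-collection argument of \autoref{prop:K_theory}, one grading $W$).
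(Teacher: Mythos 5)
Your proposal is correct and follows the same architecture as the paper's proof: reduce to $\ell,\ell'$ generic for $\ol{\Sigma}$ via \autoref{cor:perturbation}, compare $\Crit(W)$ with the two semistable loci (the paper's \autoref{lem:crit_locus}), use that categories of singularities only see a neighborhood of the critical locus (\autoref{lem:restriction}), and apply an Isik-type dimensional reduction over the Deligne--Mumford base $X^{\rm ss}(\ell)/G$ (\autoref{lem:LG_CY}). The one step you handle genuinely differently is the equivalence between the singularity categories of the two GIT quotients of $X\oplus\fg$: you propose to re-run the window argument inside the matrix factorization category, importing as a black box the grade-restriction-rule theorem for gauged Landau--Ginzburg models to get fully faithfulness of $\ol{\cM}\to \op{MF}^{\bG_m}_G((X\oplus\fg)^{\rm ss}(\ell),W)$. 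The paper instead (\autoref{cor:sing}) never invokes an MF window theorem: it observes that the already-established equivalence $F_{\ell,\ell',\delta}$ of \autoref{cor:equivalences} (extended with the auxiliary $\bG_m$) is a morphism of $\Perf(\bA^1/\bG_m)^\otimes$-module categories, applies $\op{Fun}^{\rm ex}_{\Perf(\bA^1/\bG_m)^\otimes}(\Perf(\{0\}/\bG_m),-)$ via the Ben-Zvi--Nadler--Preygel integral transform theorem to obtain the equivalence on derived categories of the zero fibers, and then passes to ${\rm D}^b_{\op{sing}}$ formally. Your route is more direct and self-contained at the level of ideas but requires verifying the hypotheses of an external MF window theorem (in particular that the window widths are still $\eta_{\lambda_i}$ and that your weight estimate \eqref{eqn:weight_bounds} places $\ol{\cM}$ in the window, which you correctly note follows from $G$-invariance of $W$); the paper's route buys the singularity-category statement for free from \autoref{thm:magic_windows} at the cost of some $\infty$-categorical machinery. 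Your additional observation that $\Crit(W)\cap(X^{\rm ss}(\ell)\times\fg)=X_0^{\rm ss}(\ell)\times\{0\}$ is correct but not needed; only the equality of the two intersections matters for \autoref{lem:restriction}.
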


We will prove this theorem at the end of this section. It is a consequence of a more general result about (graded) categories of singularities which follows formally from \autoref{thm:magic_windows}. We consider a linear representation $X$ of $G \times \bG_m$ which is quasi-symmetric for the action of $G$, and we let $W \colon X \to \bA^1$ be a function which is $G$-invariant and $\bG_m$-equivariant for the scaling action of $\bG_m$ on $\bA^1$. With this setup we briefly recall the construction and basic properties of the graded category of singularities, originally introduced in \cite{orlov}. Many authors have studied versions of the category of graded singularities \cite{Isik,BFK,Shipman,hirano}, and these methods are by now standard in the theory of derived categories, but we use the conventions of \cite{HLPomerleano}, which contains formulations of these standard results in the ways we will need.

There is a natural transformation $- \otimes \cL [-2] \to \op{id}$ where $\cL = \cO_{W^{-1}(0)} \langle -1 \rangle$ (both are endofunctors of ${\rm D}^b(W^{-1}(0)/G\times \bG_m)$). We define the graded category of singularities ${\rm D}^b_{\op{sing}}(X/G\times \bG_m,W)$ to be the idempotent completion of the dg-category whose objects are objects of ${\rm D}^b(W^{-1}(0)/ G\times \bG_m)$ and whose morphisms are
\[
\RHom_{{\rm D}^b_{\op{sing}}(X/G\times \bG_m,W)}(E,F) :=  \op{hocolim}_p \left( \RHom_{{\rm D}^b(W^{-1}(0)/G\times \bG_m)}(E,F \otimes \cL^{-p})[2p] \right).
\]
This is referred to as the graded category of singularities because when one collapses the $\bZ$-grading on ${\rm D}^b_{\op{sing}}(X/G \times \bG_m,W)$ to a $\bZ/2\bZ$-grading, one gets the usual derived category of singularities on $W^{-1}(0)/G$. More precisely, if $k(\!(\beta)\!)$ is the field of Laurent series on a variable of homological degree $-2$, then by \cite[Proposition 1.22]{HLPomerleano} we have\footnote{The map $X/G\times \bG_m \to \bA^1/\bG_m$ is a special case of what is sometimes referred to as a graded Landau--Ginzburg model: a smooth quasi-compact stack $\cX$ whose points have affine automorphism groups along with a map $\cX \to \bA^1 / \bG_m$. The definition of ${\rm D}^b_{\op{sing}}(\cX,W)$ carries over to this setting, and we have ${\rm D}^b_{\op{sing}}(\cX,W) \otimes_k k(\!(\beta)\!) \simeq {\rm D}^b((W')^{-1}(0)) / \Perf((W')^{-1}(0))$, where $W'$ is the pullback of $W$ to the total space of the $\bG_m$-torsor $\cX' \to \cX$ classified by the map $\cX \to \bA^1/\bG_m \to B\bG_m$.}
\[
{\rm D}^b_{\op{sing}}(X/G\times \bG_m,W) \otimes_k k(\!(\beta)\!) \simeq {\rm D}^b(W^{-1}(0)/G) / \Perf(W^{-1}(0)/G),
\]
where the latter denotes the Verdier-Keller-Drinfeld quotient of pre-triangulated dg-categories (regarded as a stable $k$-linear $\infty$-category).

\begin{cor} \label{cor:sing}
Let $W \colon X / G\times \bG_m \to \bA^1 / \bG_m$ be as above. Let $\ol{\Sigma} \subset M_\bR$ denote the zonotope associated to the character of the quasi-symmetric representation $X$ of $G$, and assume that $\ol{\Sigma}$ spans $M_\bR$ and that $(M_\bR^W)_{\ol{\Sigma}\text{-{\rm gen}}} \neq \emptyset$. Then for any $\ell,\ell'$ such that $X^{\rm{ss}} = X^{\rm{s}}$ we have an equivalence
\[
{\rm D}^b_{\op{sing}}(X^{\rm{ss}}(\ell)/ G\times \bG_m ,W) \simeq {\rm D}^b_{\op{sing}}(X^{\rm{ss}}(\ell')/G\times \bG_m,W).
\]
\end{cor}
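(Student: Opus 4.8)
The plan is to transport the magic window $\cM(\delta+\ol{\nabla})$ through the passage to the graded category of singularities, using that \autoref{thm:magic_windows} has a $\bG_m$-equivariant refinement. Fix $\delta\in M_\bR^W$ with $\partial(\delta+\ol{\nabla})\cap M=\emptyset$ (possible by \autoref{lem:hyperplane}), and let $\cM_{\bG_m}\subset{\rm D}^b(X/G\times\bG_m)$ be the thick subcategory split-generated by the bundles $\cO_X\otimes U\langle n\rangle$ with $U\in\Irr(G)$, $\op{Char}(U)\subseteq\delta+\ol{\nabla}$, and $n\in\bZ$; note that it is stable under the twist $\langle 1\rangle$. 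As in the proof of \autoref{prop:K_theory}, for every $\ell$ with $X^{\rm{ss}}(\ell)=X^{\rm{s}}(\ell)$ the restriction functor is an equivalence $\cM_{\bG_m}\xrightarrow{\ \simeq\ }{\rm D}^b(X^{\rm{ss}}(\ell)/G\times\bG_m)$: full faithfulness is \autoref{lem:fully_faithful}, and essential surjectivity follows by pushing a generating set forward along the affine, faithfully flat map $X^{\rm{ss}}(\ell)/G\times\bG_m\to X^{\rm{ss}}(\ell)/G$. Write $Q\colon{\rm D}^b(X/G\times\bG_m)\to{\rm D}^b_{\op{sing}}(X/G\times\bG_m,W)$ for the functor sending $E$ to ${\rm L}i^\ast E$, where $i\colon W^{-1}(0)\hookrightarrow X$, viewed as an object of the singularity category (which by definition has the same objects as ${\rm D}^b(W^{-1}(0)/G\times\bG_m)$), and $Q_\ell$ for its analogue over $X^{\rm{ss}}(\ell)$. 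Restriction to an open substack commutes with ${\rm L}i^\ast$, with the natural transformation $-\otimes\cL[-2]\to\op{id}$, and hence with the colimit defining the morphisms of the singularity category, so there is a commuting square with horizontal arrows $Q,Q_\ell$ and vertical arrows the restriction functor $\Phi_\ell\colon{\rm D}^b_{\op{sing}}(X/G\times\bG_m,W)\to{\rm D}^b_{\op{sing}}(X^{\rm{ss}}(\ell)/G\times\bG_m,W)$ and the ordinary restriction. Setting $\ol{\cM}\subset{\rm D}^b_{\op{sing}}(X/G\times\bG_m,W)$ to be the thick closure of $Q(\cM_{\bG_m})$, it suffices to prove that $\Phi_\ell|_{\ol{\cM}}$ is an equivalence for every admissible $\ell$; the desired equivalence is then $\Phi_{\ell'}\circ(\Phi_\ell|_{\ol{\cM}})^{-1}$. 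The degenerate case $W=0$ is vacuous, as both singularity categories then vanish.

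For essential surjectivity of $\Phi_\ell|_{\ol{\cM}}$, the idea is a generation argument: the tautological functor from ${\rm D}^b((W^{-1}(0)\cap X^{\rm{ss}}(\ell))/G\times\bG_m)$ to its singularity category is essentially surjective onto a split-generating subcategory; that derived category is in turn split-generated by the essential image of ${\rm L}i_\ell^\ast$, because the self-intersection formula realizes any sheaf $\cF$ on the Cartier divisor $W^{-1}(0)\cap X^{\rm{ss}}(\ell)$ as a direct summand of ${\rm L}i_\ell^\ast i_{\ell\ast}\cF$ with $i_{\ell\ast}\cF\in{\rm D}^b(X^{\rm{ss}}(\ell)/G\times\bG_m)$; and ${\rm D}^b(X^{\rm{ss}}(\ell)/G\times\bG_m)$ is split-generated by the restrictions of the generators of $\cM_{\bG_m}$, by the $\bG_m$-equivariant form of \autoref{thm:magic_windows}. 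Chaining these through the commuting square shows that $\Phi_\ell(\ol{\cM})$ split-generates the target, which, both categories being idempotent-complete, forces $\Phi_\ell|_{\ol{\cM}}$ to be essentially surjective.

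Full faithfulness of $\Phi_\ell|_{\ol{\cM}}$ is the real content, and it is here that \autoref{lem:fully_faithful} is used. Since full faithfulness on a generating set propagates to the generated thick subcategory, it suffices to check it on the generators $A=\cO_X\otimes U\langle m\rangle$, $B=\cO_X\otimes V\langle n\rangle$ of $\cM_{\bG_m}$. With $\cL=\cO_{W^{-1}(0)}\langle -1\rangle$, the source hom-complex is $\op{hocolim}_p\RHom_{{\rm D}^b(W^{-1}(0)/G\times\bG_m)}({\rm L}i^\ast A,{\rm L}i^\ast B\otimes\cL^{-p})[2p]$. The plan is to rewrite the $p$-th term, via adjunction and the projection formula, as $\RHom_{{\rm D}^b(X/G\times\bG_m)}(A,B\otimes i_\ast\cL^{-p})$; since $X$ is a vector space, $W$ is a non-zerodivisor in $\cO_X$, so $i_\ast\cL^{-p}\simeq[\cO_X\langle p-1\rangle\xrightarrow{W}\cO_X\langle p\rangle]$, and the term reduces to the two-slot complex $[\RHom_X(A,B\langle p-1\rangle)\xrightarrow{W}\RHom_X(A,B\langle p\rangle)]$ of vector spaces, each slot concentrated in cohomological degree $0$ because $X$ is affine and $G\times\bG_m$ is reductive. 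The identical computation over $X^{\rm{ss}}(\ell)$ produces the same complex with $\RHom_{X^{\rm{ss}}(\ell)}(A|,B|\langle q\rangle)$ in place of $\RHom_X(A,B\langle q\rangle)$, and the restriction map between the two is a quasi-isomorphism for \emph{every} $p$: the objects $A,B\langle q\rangle$ all lie in $\cM_{\bG_m}$, which restricts fully faithfully by \autoref{lem:fully_faithful}, so each slot is preserved, and the differential ``multiplication by $W$'' is manifestly compatible with restriction. These quasi-isomorphisms are compatible with the transition maps of the colimit (again because $-\otimes\cL[-2]\to\op{id}$ is compatible with restriction), so passing to $\op{hocolim}_p$ finishes the argument. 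I expect the main obstacle to be purely organizational — arranging the Koszul presentation of $i_\ast\cL^{-p}$ and the colimit transition maps so that the restriction comparison is manifestly natural in $p$ — since, granting \autoref{thm:magic_windows} in its $\bG_m$-equivariant form, the remaining steps are formal.
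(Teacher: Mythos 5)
Your proposal is correct, and after the shared first step --- the $\bG_m$-equivariant refinement of \autoref{thm:magic_windows}, which the paper also extracts from the proof of \autoref{prop:K_theory} --- it diverges from the paper's argument. The paper proceeds formally: it observes that ${\rm D}^b(X/G\times\bG_m)$, its window subcategory, and the two semistable-locus categories are all module categories over $\Perf(\bA^1/\bG_m)^\otimes$ via $W$, invokes the Ben-Zvi--Nadler--Preygel integral-transform theorem to rewrite ${\rm D}^b(W^{-1}(0)/G\times\bG_m)$ as $\op{Fun}^{\rm ex}_{\Perf(\bA^1/\bG_m)^\otimes}(\Perf(\{0\}/\bG_m),-)$, and transports the window equivalence through this construction; compatibility with the natural transformation $-\otimes\cL[-2]\to\op{id}$ is then automatic because everything is a morphism of module categories. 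You instead build the window directly inside ${\rm D}^b_{\op{sing}}(X/G\times\bG_m,W)$ and verify \autoref{ans:G} there by hand: full faithfulness via adjunction, the projection formula, and the Koszul presentation of $i_\ast\cL^{-p}$, reducing to \autoref{lem:fully_faithful} applied to all $\bG_m$-twists of the window generators; essential surjectivity via the self-intersection formula $\cF\oplus\cF\otimes\cL[1]\simeq {\rm L}i^\ast i_\ast\cF$ (which does split here, since $W$ acts by zero on $i_\ast\cF$). Your route is more elementary and yields an explicit description of the hom-complexes in the singularity category, at the cost of having to organize by hand the naturality in $p$ of the termwise quasi-isomorphisms --- the coherence issue you flag, which is genuine but routine in this dg setting since all maps involved are restrictions or multiplication by $W$. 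The paper's more abstract formulation buys exactly this coherence for free and, more importantly, sets up the ${\rm D}_{\op{sing}}(\cC)$ machinery that is reused in the final section to produce the groupoid action on hyperk\"ahler quotients; your argument would need to be supplemented to serve that later purpose.
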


\begin{proof}
The argument in the proof of \autoref{prop:K_theory} shows that \autoref{thm:magic_windows}, and thus \autoref{cor:equivalences}, extend to the setting in which there is an auxiliary group action (in this case, a $\bG_m$-action) commuting with the action of $G$ on $X$. The only modification is that we must redefine $\cM(\delta + \ol{\nabla})$ to be the subcategory of ${\rm D}^b(X/G\times \bG_m)$ generated by $\cO_X \otimes U\langle n \rangle$ for $U \in \Irr(G)$ with $\op{Char}(U) \subseteq \delta + \ol{\nabla}$. Thus for any $\delta \in M_\bR^W$ with $\partial(\delta + \ol{\nabla}) \cap M = \emptyset$ we have an equivalence 
\begin{equation}\label{eqn:equivalence}
F_{\ell,\ell',\delta} \colon {\rm D}^b(X^{\rm{ss}}(\ell)/G\times \bG_m) \xrightarrow{\op{res}_{X^{\rm{ss}}(\ell)}^{-1}} \cM(\delta + \ol{\nabla}) \xrightarrow{\op{res}_{X^{\rm{ss}}(\ell)}} {\rm D}^b(X^{\rm{ss}}(\ell')/G\times \bG_m).
\end{equation}

The argument for ${\rm D}^b_{\op{sing}}$ follows formally from this and is essentially the same as in \cite[Proposition 5.5]{HL}, so we only sketch it here briefly: The key fact is that all three categories in \eqref{eqn:equivalence} are module categories over the symmetric monoidal $\infty$-category $\Perf(\bA^1 / \bG_m)^\otimes$ via the $G$-invariant and $\bG_m$-equivariant map $W \colon X \to \bA^1$, and the restriction functors are maps of $\Perf(\bA^1/\bG_m)^{\otimes}$-module categories.\footnote{The fact that $\cM(\delta + \ol{\nabla}) \subset {\rm D}^b(X / G \times \bG_m)$ is closed under tensor product with $W^\ast E$ for $E \in \Perf(\bA^1 / \bG_m)$ implies that it is a $\Perf(\bA^1/\bG_m)^\otimes$-module subcategory. This can be checked in two ways: 1) one can use the explicit generators used to define $\cM(\delta + \ol{\nabla})$ and the fact that $\Perf(\bA^1/\bG_m)$ is generated by the locally free sheaves $\cO_{\bA^1}\langle n \rangle$, or 2) one can use the fact that $\cM(\delta + \ol{\nabla})$ is equal to $\cG^w$ for some $w$, and the grade restriction rules used to define $\cG^w$ are preserved under tensor product with objects in $W^\ast(\Perf(\bA^1 / \bG_m))$.} Note that $\bA^1 / \bG_m$ is a perfect stack, and the inclusion of the origin $\{0\} / \bG_m \hookrightarrow \bA^1 / \bG_m$ is a proper representable map. The hypotheses of \cite[Theorem 3.0.4]{BZPN} are satisfied, so the ``$\ast$-integral transform'' construction provides an equivalence\footnote{Technically the left hand side must be interpreted as the derived category, with bounded coherent homology, of the \emph{derived} zero fiber, but because $X$ is integral this will agree with the classical zero fiber as long as $W \colon X \to \bA^1$ is surjective.}
\[
{\rm D}^b (W^{-1}(0) /G \times \bG_m) \simeq \op{Fun}^{\rm ex}_{\Perf(\bA^1/\bG_m)^\otimes} (\Perf(\{0\}/\bG_m), {\rm D}^b(X / G\times \bG_m)),
\]
where the right hand side is the $\infty$-category of exact functors of $\Perf(\bA^1/\bG_m)^\otimes$-module categories. The analogous equivalence holds for ${\rm D}^b(W^{-1}(0)^{\rm{ss}}(\ell)/G)$ and ${\rm D}^b (W^{-1}(0)^{\rm{ss}}(\ell')/G)$. 

Therefore, applying $\op{Fun}^{\rm ex}_{\Perf(\bA^1/\bG_m)^\otimes}(\Perf(\{0\}/\bG_m), -)$ to the equivalence \eqref{eqn:equivalence} induces an equivalence
\[
F_{\ell,\ell',\delta} \colon {\rm D}^b(W^{-1}(0)^{\rm{ss}}(\ell)/G) \simeq {\rm D}^b (W^{-1}(0)^{\rm{ss}}(\ell')/G).
\]
One can check that this equivalence canonically preserves the natural transformation $- \otimes \cO_{W^{-1}(0)}\langle-1\rangle[-2] \to \op{id}$ used to define the graded category of singularities, as both are the restrictions to $W^{-1}(0)^{\rm{ss}}(\ell) / G\times \bG_m$ and $W^{-1}(0)^{\rm{ss}}(\ell') / G\times \bG_m$ of the same structure on $W^{-1}(0) / G \times \bG_m$.\footnote{In fact, this natural transformation can be encoded entirely in terms of the $\Perf(\bA^1/\bG_m)^\otimes$-module structure of ${\rm D}^b(X/G\times\bG_m)$ and the description of ${\rm D}^b(W^{-1}(0) / G\times \bG_m)$ as a functor category.} It follows that $F_{\ell,\ell',\delta}$ induces an equivalence for ${\rm D}^b_{\op{sing}}$ as well.
\end{proof}

Now consider a smooth Deligne--Mumford global quotient stack $\cX$, and let $\sigma \in \Gamma(\cX,\cE)$ be a section of a locally free sheaf. Then one can consider the function $W \colon \op{Tot}(\cE^\dual) \to \bA^1$ induced by $\sigma$, which is equivariant with respect to the scaling action on $\op{Tot}(\cE^\dual)$. Then we have
\begin{prop} \label{lem:LG_CY}
There is a canonical equivalence ${\rm D}^b_{\op{sing}}(\op{Tot}(\cE^\dual) / \bG_m, W) \simeq {\rm D}^b(\sigma^{-1}(0))$.
\end{prop}
\begin{proof}
This is a slight generalization of the main results of \cite{Isik, Shipman, orlov2}, which establish the case where $G= \{1\}$. In the equivariant setting, it is established in \cite[Proposition 4.8]{hirano2} for factorization categories, which is equivalent to the claim for singularity categories by \cite[Theorem 3.6]{hirano2} (see also the proof of \cite[Theorem 4.2]{hirano2}).
\end{proof}

\begin{lem} \label{lem:crit_locus}
Assume that $X^{\rm{ss}}(\ell) / G$ is Deligne--Mumford. Then we have
\[
\Crit(W) \cap (X \times \fg)^{\rm{ss}}(\ell) = \Crit(W) \cap (X^{\rm{ss}}(\ell) \times \fg).
\]
\end{lem}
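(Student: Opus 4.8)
The plan is to give an explicit description of $\Crit(W)$ and then to deduce the asserted equality from the Hilbert--Mumford criterion, the one substantive input being Kempf's theory of optimal destabilizing one-parameter subgroups.

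First I would compute $\Crit(W)$, where $W \colon X \times \fg \to \bA^1$ is the function $W(x,\xi) = \langle \mu(x), \xi \rangle$ attached to the moment map as in \autoref{lem:LG_CY}. Differentiating in the $\fg$-directions forces $\mu(x) = 0$, while differentiating in the $X$-directions gives, by the moment map equation \eqref{eqn:moment_map}, the functional $v \mapsto \omega_x(v, H_\xi(x))$ on $T_x X$; by nondegeneracy of $\omega$ this vanishes exactly when $H_\xi(x) = 0$, i.e.\ when $\xi \in \op{Lie}(\op{Stab}_G(x))$ (using that stabilizers are smooth in characteristic $0$). Hence
\[
\Crit(W) = \{ (x,\xi) \in X \times \fg \mid \mu(x) = 0 \text{ and } \xi \in \op{Lie}(\op{Stab}_G(x)) \}.
\]
I note that the Deligne--Mumford hypothesis is not logically needed for the set-theoretic identity being proved: it enters only through finiteness of stabilizers on $X^{\rm{ss}}(\ell)$, which forces the $\fg$-coordinate of a point of $\Crit(W)$ lying over $X^{\rm{ss}}(\ell)$ to vanish, and this is how the lemma is used downstream.

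The inclusion ``$\supseteq$'' is then immediate and does not even use $\Crit(W)$: I would observe that $X^{\rm{ss}}(\ell) \times \fg \subseteq (X \times \fg)^{\rm{ss}}(\ell)$, where both semistable loci are taken with respect to the linearization pulled back along the $G$-equivariant projection $p \colon X \times \fg \to X$. Indeed, if a one-parameter subgroup $\lambda$ of $G$ destabilizes $(x,\xi)$ --- i.e.\ $\lim_{t \to 0} \lambda(t) \cdot (x,\xi)$ exists and $\langle \lambda, \ell \rangle > 0$ --- then applying $p$ shows $\lim_{t \to 0} \lambda(t) \cdot x$ exists with $\langle \lambda, \ell \rangle > 0$, so $x$ is unstable; contrapositively, $x \in X^{\rm{ss}}(\ell)$ forces $(x,\xi) \in (X \times \fg)^{\rm{ss}}(\ell)$.

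The inclusion ``$\subseteq$'' is the substantial one: I must show that if $(x,\xi) \in \Crit(W)$ and $x \notin X^{\rm{ss}}(\ell)$, then $(x,\xi) \notin (X \times \fg)^{\rm{ss}}(\ell)$. Since $x$ is unstable, I would invoke Kempf's theorem on the optimal destabilizing one-parameter subgroup (the statement of which uses the fixed Weyl-invariant inner product on $N$): there is a one-parameter subgroup $\lambda$ of $G$ with $\lim_{t \to 0} \lambda(t) \cdot x$ existing, $\langle \lambda, \ell \rangle > 0$, and --- the decisive point --- with $\op{Stab}_G(x)$ contained in the associated parabolic $P(\lambda) = \{ g \in G \mid \lim_{t \to 0} \lambda(t) g \lambda(t)^{-1} \text{ exists} \}$. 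Passing to Lie algebras, $\op{Lie}(\op{Stab}_G(x)) \subseteq \op{Lie}(P(\lambda)) = \fg^{\lambda \geq 0}$, so the description of $\Crit(W)$ yields $\xi \in \fg^{\lambda \geq 0}$, hence $\lim_{t \to 0} \lambda(t) \cdot \xi$ exists. Therefore $\lim_{t \to 0} \lambda(t) \cdot (x,\xi)$ exists with $\langle \lambda, \ell \rangle > 0$, so by the Hilbert--Mumford criterion $(x,\xi)$ is unstable, as needed. I expect the main obstacle to be precisely the containment $\op{Stab}_G(x) \subseteq P(\lambda)$: an arbitrarily chosen destabilizing one-parameter subgroup of $x$ need not satisfy it, and it is exactly in order to pin down the norm-minimizing (Kempf-optimal) one that the Weyl-invariant inner product on $N$ is needed.
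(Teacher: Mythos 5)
Your proposal is correct and follows essentially the same route as the paper: compute $\Crit(W) = \{(x,\xi) \mid \mu(x)=0,\ (H_\xi)_x = 0\}$ from the moment map equation, then use the Kempf optimal destabilizing one-parameter subgroup and the containment $\op{Stab}_G(x) \subseteq P_\lambda$ to show that $\lambda$ also destabilizes $(x,\xi)$. You are merely more explicit than the paper about the easy inclusion (via the projection $X\times\fg \to X$) and about where the Weyl-invariant norm enters.
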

\begin{proof}
The defining property of $\mu$ in \eqref{eqn:moment_map} implies that
\[
dW(x,\xi) = \omega_x(-,(H_\xi)_x) + \mu(x),
\]
where the first and second summand correspond to the first and second summand in the decomposition $T^\ast_{(x,\xi)} (X \times \fg) \cong T^\ast_x X \oplus \fg^\dual$. Therefore $dW = 0$ if and only if $\mu(x) = 0$ and $(H_\xi)_x = 0$, i.e., the infinitesimal action of $\xi$ fixes the point $x$.

If $x$ is an unstable point of $X$, however, then we know from GIT that $\op{Stab}(x) \subseteq P_\lambda$, where $\lambda$ is the maximally destabilizing one-parameter subgroup of $x$. This implies that $\xi \in \op{Lie}(P_\lambda)$, so in fact $\lambda(t) \cdot (x,\xi)$ has a limit as $t \to 0$ and thus $\lambda$ destabilizes the point $(x,\xi)$ as well.
\end{proof}

\begin{lem} \label{lem:restriction}
Let $\cX$ be a smooth perfect stack, and let $W \colon \cX \to \bA^1 / \bG_m$ be a morphism. Assume that $\cZ \subset \cX$ is a closed substack such that $\cZ \cap \Crit(W) = \emptyset$. Then the restriction functor ${\rm D}^b_{\op{sing}}(\cX,W) \to {\rm D}^b_{\op{sing}}(\cX \setminus \cZ,W)$ is an equivalence of categories.
\end{lem}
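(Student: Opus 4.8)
The plan is to reduce the statement to a manipulation of Verdier quotients of bounded derived categories of coherent sheaves on the zero fibers; the only genuinely geometric input will be that $\cZ$ meets $W^{-1}(0)$ only at regular points.

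First I would set $Y = W^{-1}(0) \subseteq \cX$ and $\cZ_0 = \cZ \cap Y$, so that $(\cX \setminus \cZ) \cap Y = Y \setminus \cZ_0$, and recall from the standard theory of graded categories of singularities (see \cite{HLPomerleano, Isik}) that the tautological functor ${\rm D}^b(Y) \to {\rm D}^b_{\op{sing}}(\cX,W)$, which is the identity on objects and inverts the maps $F \to F \otimes \cL^{-1}[2]$, exhibits ${\rm D}^b_{\op{sing}}(\cX,W)$ as the idempotent completion of the Verdier quotient ${\rm D}^b(Y)/\Perf(Y)$. The content of this identification is that inverting those maps kills exactly the perfect complexes on $Y$, which I would justify by base changing along $k \to k(\!(\beta)\!)$, using the identification ${\rm D}^b_{\op{sing}}(\cX,W) \otimes_k k(\!(\beta)\!) \simeq {\rm D}^b(Y)/\Perf(Y)$ recalled in the text together with the conservativity of $- \otimes_k k(\!(\beta)\!)$ on $k$-linear dg-categories. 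The same description applies verbatim to ${\rm D}^b_{\op{sing}}(\cX \setminus \cZ, W)$ with $Y \setminus \cZ_0$ in place of $Y$, and under these identifications the restriction functor in the statement is the one induced by restriction ${\rm D}^b(Y) \to {\rm D}^b(Y \setminus \cZ_0)$ along an open immersion.

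The key geometric step is to show that ${\rm D}^b_{\cZ_0}(Y) \subseteq \Perf(Y)$, where ${\rm D}^b_{\cZ_0}(Y)$ is the thick subcategory of complexes whose cohomology is set-theoretically supported on $\cZ_0$. I would first observe the elementary inclusion $\op{Sing}(Y) \subseteq \Crit(W) \cap Y$: at a point $y \in Y$ with $dW|_y \neq 0$, the map $W$ is submersive onto $\bA^1$ near $y$, so $Y$ is regular there. Since $\cZ \cap \Crit(W) = \emptyset$ by hypothesis, it follows that $\cZ_0 \cap \op{Sing}(Y) = \emptyset$, i.e. the open substack $\cU := Y \setminus \op{Sing}(Y)$ is regular and contains $\cZ_0$ (which is closed since $\cZ$ is). Then for $F \in {\rm D}^b_{\cZ_0}(Y)$ one has $F|_{Y \setminus \cZ_0} = 0$ (hence perfect) and $F|_{\cU}$ perfect (as $\cU$ is regular Noetherian, so ${\rm D}^b(\cU) = \Perf(\cU)$), and since $\{\cU,\ Y\setminus\cZ_0\}$ is an open cover of $Y$ and perfectness is Zariski-local, $F$ is perfect on $Y$.

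Finally, I would conclude by pure formalism: restriction identifies ${\rm D}^b(Y \setminus \cZ_0)$ with ${\rm D}^b(Y)/{\rm D}^b_{\cZ_0}(Y)$, and (Thomason--Neeman) $\Perf(Y \setminus \cZ_0)$ with the idempotent completion of the image of $\Perf(Y)$; since ${\rm D}^b_{\cZ_0}(Y) \subseteq \Perf(Y)$, the ``third isomorphism theorem'' for Verdier quotients gives
\[
{\rm D}^b_{\op{sing}}(\cX \setminus \cZ, W)\ \simeq\ \Big( \big({\rm D}^b(Y)/{\rm D}^b_{\cZ_0}(Y)\big) \big/ \big(\Perf(Y)/{\rm D}^b_{\cZ_0}(Y)\big) \Big)^{\!\natural}\ \simeq\ \big( {\rm D}^b(Y)/\Perf(Y) \big)^{\!\natural}\ \simeq\ {\rm D}^b_{\op{sing}}(\cX,W),
\]
and one checks this equivalence is inverse to the restriction functor. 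I expect the main obstacle to be the first paragraph — being careful that, in the generality of smooth perfect stacks with a $\bG_m$-grading, the graded singularity category really is the Verdier quotient of ${\rm D}^b$ of the zero fiber by $\Perf(Y)$, and that the restriction functors are compatible with this; everything after that is bookkeeping, the one new observation being the containment $\op{Sing}(W^{-1}(0)) \subseteq \Crit(W)$ that converts the hypothesis $\cZ \cap \Crit(W) = \emptyset$ into the regularity of $Y$ near $\cZ_0$.
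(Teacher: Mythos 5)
Your argument is correct in outline, but it takes a genuinely different route from the paper. The paper never establishes the graded Verdier-quotient description of ${\rm D}^b_{\op{sing}}(\cX,W)$: it checks full faithfulness of restriction after $2$-periodization (where the quotient description ${\rm D}^b((W')^{-1}(0))/\Perf((W')^{-1}(0))$ for the zero fiber on the $\bG_m$-torsor $\cX'$ is already available, and the support argument is outsourced to Preygel's Proposition 4.1.6), and then gets essential surjectivity separately by extending bounded coherent complexes from the open substack. You instead work entirely at the graded level, identifying ${\rm D}^b_{\op{sing}}(\cX,W)$ with the idempotent completion of ${\rm D}^b(Y)/\Perf(Y)$ for $Y = W^{-1}(0)\subset\cX$ and running the localization sequence yourself; the geometric content that the paper hides in the citation --- $\op{Sing}(Y)\subseteq\Crit(W)\cap Y$, hence ${\rm D}^b_{\cZ_0}(Y)\subseteq\Perf(Y)$ --- is made explicit, which is a virtue. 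The cost is that your approach needs the graded hypersurface statement ``colimit category $=$ idempotent-completed Verdier quotient'' as input, which the paper deliberately avoids.

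That first identification is true (it is the graded Orlov/Isik/Preygel statement, resting on the fact that the cone of $\beta\colon F\otimes\cL[-2]\to F$ is $i^*i_*F$ up to shift, hence perfect since $\cX$ is smooth), but your proposed justification of it is not right as written. Tensoring with $k(\!(\beta)\!)$ is \emph{not} conservative on $k$-linear dg-categories: it detects full faithfulness (since $V\mapsto V(\!(\beta)\!)$ detects acyclicity of Hom complexes) but not essential surjectivity. For the comparison functor in question this is harmless --- it is surjective on objects by construction, so only full faithfulness needs checking --- but you should say that rather than invoke conservativity. Also note that the $2$-periodization recalled in the text is the quotient for $(W')^{-1}(0)$ on the torsor $\cX'$, not for $Y\subset\cX$; reconciling the two requires a (routine) de-equivariantization step along $\bG_m$. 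With the first paragraph justified by citation or by the $i^*i_*$ argument, the rest of your proof is sound: the inclusion $\op{Sing}(Y)\subseteq\Crit(W)$, the Zariski-local perfectness argument, and the third-isomorphism-theorem bookkeeping all go through for the stacks considered here.
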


Scheme theoretic versions of this lemma go back to the first study of categories of matrix factorizations in the geometric context \cite[Proposition 1.14]{orlov}, but we include a proof in this more general context for completeness.

\begin{proof}
As remarked above, the $2$-periodization of ${\rm D}^b_{\op{sing}}(\cX,W)$ is canonically identified with the Drinfeld--Verdier quotient ${\rm D}^b((W')^{-1}(0)) / \Perf((W')^{-1}(0))$ where $W'$ is the restriction of $W$ to the total space of the $\bG_m$-torsor $\cX' \to \cX$ classified by the map $\cX \to \bA^1/\bG_m \to B\bG_m$. One can use this description to prove the corresponding claim for the restriction functor on the $2$-periodization of these categories (see, for instance, \cite[Proposition 4.1.6]{Preygel}). Since the fully-faithfulness of the restriction functor can be verified after $2$-periodization, it follows that the restriction functor is fully faithful. Finally, a fully faithful functor between idempotent complete dg-categories is essentially surjective if and only if its essential image contains a split generating set. By definition, ${\rm D}^b_{\op{sing}}(\cX \setminus \cZ,W)$ is split-generated by objects corresponding to objects of ${\rm D}^b(\cX \setminus \cZ)$, and any such object can be extended to an object of ${\rm D}^b(\cX)$.
\end{proof}

\begin{proof}[Proof of \autoref{thm:hyperkaehler}]
First note that by \autoref{cor:perturbation} we may make an arbitrarily small perturbation of $\ell$ and $\ell'$, without changing $X_0^{\rm{ss}}(\ell)$ or $X_0^{\rm{ss}}(\ell')$, such that both $(X \times \fg)^{\rm{ss}}(\ell) = (X \times \fg)^{\rm{s}}(\ell)$ and $(X \times \fg)^{\rm{ss}}(\ell') = (X \times \fg)^{\rm{s}}(\ell')$. It therefore suffices to assume this throughout the remainder of the proof.

We apply the preceding observations to the $G$-invariant $\bG_m$-equivariant map $W \colon X \times \fg \to \bA^1$ induced by the moment map $\mu \colon X \to \fg^\dual$. \autoref{cor:sing} implies that we have an equivalence 
\[
F_{\ell,\ell',\delta} \colon {\rm D}^b_{\op{sing}}((X\times \fg)^{\rm{ss}}(\ell) / G \times \bG_m,W) \simeq {\rm D}^b_{\op{sing}}((X \times \fg)^{\rm{ss}}(\ell') / G \times \bG_m,W).
\]
Combining \autoref{lem:restriction} with \autoref{lem:crit_locus} shows that the restriction functor induces an equivalence
\[
{\rm D}^b_{\op{sing}}((X\times \fg)^{\rm{ss}}(\ell) / G \times \bG_m, W) \simeq {\rm D}^b_{\op{sing}}((X^{\rm{ss}}(\ell) \times \fg) / (G \times \bG_m) , W).
\]
Finally when we regard $X^{\rm{ss}}(\ell) \times \fg / G \to X^{\rm{ss}}(\ell)/G$ as a vector bundle, the function $W$ on the right hand side above is induced under the construction preceding \autoref{lem:LG_CY} by the function $\mu$ regarded as a section of $\cO_X \otimes \fg^\dual$, so we have an equivalence
\[
{\rm D}^b_{\op{sing}}((X^{\rm{ss}}(\ell) \times \fg) / (G \times \bG_m) , W) \simeq {\rm D}^b((\mu^{-1}(0))^{\rm{ss}}(\ell) / G).
\]
The same argument gives an equivalence 
\[
{\rm D}^b_{\op{sing}}((X \times \fg)^{\rm{ss}}(\ell') / (G \times \bG_m) , W) \simeq {\rm D}^b((\mu^{-1}(0))^{\rm{ss}}(\ell') / G),
\]
which, when combined with $F_{\ell,\ell',\delta}$, provides the desired equivalence.
\end{proof}

\subsection{Example: Nakajima quiver varieties}

A large source of applications of Theorem~\ref{thm:hyperkaehler} comes from Nakajima quiver varieties, and we now explain what it gives in this setting. We partially follow the exposition in \cite[\S 2]{nakajima}.

Our initial data is a finite graph $Q$. Let $I$ denote the set of vertices, and let $E$ denote the set of edges. Let $H$ denote the set of pairs consisting of an edge and a choice of orientation, so $\#H = 2\# E$. For $h \in H$, let $o(h)$ denote the outgoing vertex and $i(h)$ denote the ingoing vertex (so $h$ points away from $o(h)$ and towards $i(h)$). 

Given $I$-graded vector spaces $V^1$ and $V^2$, define
\[
L(V^1, V^2) = \bigoplus_{i \in I} \op{Hom}(V^1_i, V^2_i), \qquad E(V^1, V^2) = \bigoplus_{h \in H} \op{Hom}(V^1_{o(h)}, V^2_{i(h)}).
\]
Let $\bv = (v_i)_{i \in I}$ and $\bw = (w_i)_{i \in I}$ be two sequences of non-negative integers and let $V$ and $W$ be $I$-graded vector spaces of dimensions $\bv$ and $\bw$, respectively. Define
\[
\bM_Q(\bv, \bw) = E(V,V) \oplus L(W,V) \oplus L(V,W), \qquad G_Q(\bv) = \prod_{i \in I} {\bf GL}(V_i).
\]
Denote elements by $(B,a,b)$. We omit the subscript if not needed.
For $h \in H$, let $\ol{h}$ be the same edge with reversed orientation. Choose an orientation $\Omega \subset H$, which means that $\Omega \cap \ol{\Omega} = \emptyset$ and $\Omega \cup \ol{\Omega} = H$. Then $\Omega$ defines a function $\varepsilon \colon H \to \{\pm 1\}$ by $\varepsilon(h) = 1$ if $h \in \Omega$ and $-1$ otherwise. Finally, this gives a symplectic form on $\bM(\bv, \bw)$ by
\[
\omega((B,a,b), (B',a',b')) = \sum_{h \in H} \op{trace}(\varepsilon(h) B_h B'_{\ol{h}}) + \sum_{i \in I} \op{trace}(a_ib'_i - a'_ib_i).
\]
The group $G$ acts on $\bM(\bv, \bw)$ in the obvious way and preserves the form $\omega$. This gives an algebraic moment map $\mu \colon \bM(\bv, \bw) \to \fg(\bv)$ where $\fg(\bv)$ is the Lie algebra of $G(\bv)$ (here we are identifying $\fg(\bv)$ with its dual via the trace form). In the $\mathfrak{gl}(v_i)$-component, $\mu(B,a,b)$ takes the value 
\[
\mu(B,a,b)_i = \sum_{\substack{h \in H\\ i(h) = i}} (\varepsilon(h) B_h B_{\ol{h}} + a_i b_i).
\]

A stability parameter $\zeta$ is given by a real sequence $(\zeta_i)_{i \in I}$. We have $\op{Pic}(B G_Q(\bv)) \cong \bR^I$, and the stability parameter $\zeta$ corresponds to the parameter we have denoted $\ell$ above, but we use $\zeta$ here for consistency with \cite{nakajima}. Nakajima provides an interpretation in \cite[\S 2.2]{nakajima} of $\zeta$-semistability and $\zeta$-stability in terms of representations of $Q$, but we will not need it. 

\begin{prop}
With the notation above, let $\ol{\Sigma}$ denote the zonotope determined by the character of the representation $\bM_Q(\bv,\bw) \times \fg_Q(\bv)$ of $G = G_Q(\bv)$. Consider the graph $\Gamma$ obtained from $Q$ by deleting all vertices $i$ of $Q_0$ such that $v_i = 0$, and assume that each connected component of $\Gamma$ has a vertex $i$ such that $w_i \ne 0$. Then $(M^W_{\bR})_{\ol{\Sigma}\text{{\rm-gen}}} \ne \emptyset$ and $\ol{\Sigma}$ spans $M_\bR$.
\end{prop}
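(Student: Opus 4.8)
The plan is to make the weights of $X := \bM_Q(\bv,\bw)\times\fg_Q(\bv)$ explicit and then verify the two assertions by hand. Write $\Gamma_0=\{i\in I : v_i\neq 0\}$ for the vertex set of $\Gamma$, so that $G=G_Q(\bv)=\prod_{i\in\Gamma_0}{\bf GL}(V_i)$ (the factors with $v_i=0$ being trivial) has character lattice $M=\bigoplus_{i\in\Gamma_0}\bZ^{v_i}$ with standard basis $\{e_{i,k}\}$, Weyl group $W=\prod_{i\in\Gamma_0}S_{v_i}$ permuting the $e_{i,k}$ within each block, and $M^W_\bR=\bigoplus_{i\in\Gamma_0}\bR\cdot\mathbf 1_i$ where $\mathbf 1_i=e_{i,1}+\dots+e_{i,v_i}$. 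The nonzero weights of $X$ fall into three families: the roots $e_{i,k}-e_{i,l}$ of each factor ${\bf GL}(V_i)$ (from $\fg_Q(\bv)$, and from any loops of $Q$ at $i$); the differences $\pm(e_{a,l}-e_{b,k})$ for every edge $\{a,b\}$ of $Q$ with $a,b\in\Gamma_0$ and all $k,l$ (from $E(V,V)$, since both orientations occur); and $\pm e_{i,k}$ with multiplicity $w_i$ (from $L(W,V)$ and $L(V,W)$). Because $\bM_Q(\bv,\bw)$ is symplectic and $\fg_Q(\bv)$ is self-dual, $X$ is self-dual, hence quasi-symmetric, so $\ol\Sigma$ and the notion of being generic for $\ol\Sigma$ are defined.

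I would first prove that $\ol\Sigma$ spans $M_\bR$, which by definition is the same as $\op{Char}(X)$ spanning $M_\bR$. The roots $e_{i,k}-e_{i,l}$ span the trace-zero hyperplane of each block, so one may pass to the quotient ``trace space'' $\bigoplus_{i\in\Gamma_0}\bR\,g_i$, in which the remaining weights are sent to $g_i$ (whenever $w_i\neq 0$) and to $g_a-g_b$ (for each edge $\{a,b\}$ of $\Gamma$). On a connected component $C$ of $\Gamma$ the vectors $g_a-g_b$ span the root space $\{x\in\bR^C:\sum x=0\}$, and adjoining $g_{i_0}$ for a vertex $i_0\in C$ with $w_{i_0}\neq 0$ — which exists by hypothesis — fills out $\bR^C$; hence the images span the trace space and $\op{Char}(X)$ spans $M_\bR$.

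For the second assertion I would exhibit the explicit candidate $\ell_0:=\sum_{i\in\Gamma_0}\mathbf 1_i\in M^W_\bR$, i.e.\ the stability parameter $\zeta=(1,\dots,1)$, and show it is generic for $\ol\Sigma$ — this puts $\ell_0$ into $(M^W_\bR)_{\ol\Sigma\text{-gen}}$. Since $\ol\Sigma$ spans $M_\bR$ it only remains to see that $\ell_0$ is not parallel to any proper face of $\ol\Sigma$; by the description of the faces of a zonotope \cite{mcmullen}, the direction space of such a face has the form $V_\phi:=\op{span}\{\beta\in\op{Char}(X):\langle\phi,\beta\rangle=0\}$ for some nonzero linear functional $\phi$ on $M_\bR$, so it suffices to prove $\ell_0\notin V_\phi$ for every $\phi\neq 0$. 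After replacing $\phi$ by $-\phi$ if necessary, assume $c:=\max_{i,k}\langle\phi,e_{i,k}\rangle>0$, set $P=\{(i,k):\langle\phi,e_{i,k}\rangle=c\}$, and let $\pi_P$ be the projection of $M_\bR$ onto the coordinates indexed by $P$. Going down the list of weights, every weight $\beta$ with $\langle\phi,\beta\rangle=0$ has $\pi_P(\beta)$ in the root space $R_P:=\{x\in\bR^P:\sum x=0\}$: a weight $\pm e_{i,k}$ is $\phi$-balanced only when $\langle\phi,e_{i,k}\rangle=0\neq c$, so $\pi_P(\pm e_{i,k})=0$; and for a difference $e_u-e_v$ with $\langle\phi,e_u\rangle=\langle\phi,e_v\rangle$, either both $u,v\in P$, giving an element of $R_P$, or neither does and the projection vanishes. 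Thus $\pi_P(V_\phi)\subseteq R_P$, while $\pi_P(\ell_0)$ is the all-ones vector on $P$, whose coordinate sum equals $|P|\neq 0$; hence $\ell_0\notin V_\phi$, as needed.

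I expect the last step to be the only delicate one: a naive block-by-block projection is too lossy — it breaks down exactly when $\phi$ takes a single common nonzero value on coordinates from adjacent blocks — and the fix is to project onto the extremal level set $P$, where every $\phi$-balanced weight is forced to be a genuine root-space vector of $P$ while the Weyl-invariant class $\ell_0$ plainly is not. The rest is bookkeeping with the weight list and the graph-connectivity reduction; one could alternatively package the second assertion through condition (2) of \autoref{prop:semistable}, which reduces to the same projection estimate.
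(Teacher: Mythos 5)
Your proof is correct, but the genericity half takes a genuinely different route from the paper's. For the spanning claim the two arguments are essentially dual formulations of the same observation (the paper shows a generic point of $\fg_Q(\bv)$ has stabilizer in the center of $G_Q(\bv)$ and then uses the $w_i\neq 0$ vertices plus connectivity of $\Gamma$ to kill the central tori; you pass to the quotient of $M_\bR$ by the root spaces and run the same connectivity argument there). For genericity, however, the paper does not exhibit an explicit parameter: it characterizes the generic $\zeta$ as those with $\theta\cdot\zeta\neq 0$ for all $0\neq\theta\leq\bv$, by routing through condition (1) of \autoref{prop:semistable}, constructing the auxiliary ``abelianized'' quiver $Q'$ whose vertices split each $i$ into $v_i$ copies so that $\bM_Q(\bv,\bw)\times\fg_Q(\bv)\times\fg_Q(\bv)^\dual=\bM_{Q'}(\bv',\bw')$ with $G_{Q'}(\bv')=T_Q(\bv)$, and then invoking the argument of Nakajima's Lemma 2.12 to get $\zeta'$-semistable $=\zeta'$-stable. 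You instead verify condition (3) of \autoref{prop:semistable} directly for the single parameter $\ell_0=(1,\dots,1)$, using McMullen's description of the faces of a zonotope and the projection onto the top level set $P$ of the supporting functional $\phi$; your observation that every $\phi$-balanced weight projects into the root space $R_P$ while $\pi_P(\ell_0)$ has coordinate sum $|P|\neq 0$ is exactly the right fix for the failure of the naive block-by-block projection, and I checked it against the weight list. The trade-off: your argument is self-contained and elementary (no appeal to $Q'$ or to Nakajima's stability analysis) and produces an explicit generic stability parameter, whereas the paper's argument identifies the full hyperplane arrangement cutting out the non-generic locus in $\op{Pic}(BG)_\bR$, which is more informative if one wants to know which stability parameters are admissible rather than merely that one exists.
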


\begin{proof}
Let $\zeta$ be a stability parameter. We first claim that our conditions on $\bv$ and $\bw$ show that a generic point in $\bM_Q(\bv,\bw) \times \fg_Q(\bv)$ has trivial stabilizer under $T$. First, let $H$ be a maximal torus in ${\bf GL}(V)$ and consider the adjoint action on the Lie algebra $\mathfrak{gl}(V)$. Then the stabilizer in $H$ of a generic element in $\mathfrak{gl}(V)$ is the center of ${\bf GL}(V)$. So choose a point in $\fg_Q(\bv)$ with this genericity assumption with respect to our fixed maximal torus $T \subset G_Q(\bv)$. The stabilizer must belong to the center of $G_Q(\bv)$. But note that if $w_i \ne 0$, then any element of the stabilizer that also lives in the central $\bG_m$ acting on vertex $i$ must act trivially. This implies that the same is true for any vertex in the same connected component (in $\Gamma$) as such a vertex, and so our assumption guarantees that a generic point has trivial stabilizer.

Now we claim that $\zeta \in \op{Pic}(BG)_\bR$ is generic for $\ol{\Sigma}$ if $\theta \cdot \zeta \ne 0$ for all $(\theta_i)_{i \in I} \ne 0$ such that $\theta_i \le v_i$ for all $i \in I$. First, since $\bM(\bv, \bw) \times \fg(\bv)$ is a $T(\bv)$-equivariant subvariety of  $\bM(\bv,\bw) \times \fg(\bv) \times \fg(\bv)^\dual$, it suffices to find conditions for $(\bM(\bv, \bw) \times \fg(\bv) \times \fg(\bv)^\dual)^{\zeta\text{\rm{-ss}}}/T(\bv)$ to be Deligne--Mumford since the Hilbert--Mumford criterion implies that a point in a closed $G$-equivariant subspace is semistable if and only if it is semistable in the ambient space.

Define a new graph $Q'$ as follows. First, add an extra loop at every vertex of $Q$. Second, replace each vertex $i$ with $v_i$ vertices $i(1), \dots, i(v_i)$. For each edge between $i$ and $j$ (including the case $i=j$), add an edge between $i(\alpha)$ and $j(\beta)$ for all $1 \le \alpha \le v_i$ and $1 \le \beta \le v_j$. Now set $v'_{i(\alpha)} = 1$ and $w'_{i(\alpha)} = w_i$ for all $i \in I$ and $1 \le \alpha \le v_i$. Let $I'$ denote the vertex set of $Q'$. Then we have
\[
\bM_Q(\bv, \bw) \times \fg_Q(\bv) \times \fg_Q(\bv)^\dual = \bM_{Q'}(\bv', \bw'), \qquad G_{Q'}(\bv') = T_Q(\bv).
\]
Given a stability parameter $\zeta$ for $Q$, define a stability parameter $\zeta' = (\zeta'_{i(\alpha)})$ for $Q'$ by $\zeta'_{i(\alpha)} = \zeta_i$ for all $i \in I$ and $1 \le \alpha \le v_i$. Note that $\zeta$ restricts to $\zeta'$ on $T_Q(\bv)$, so if $\bM_{Q'}(\bv',\bw')^{\zeta'\text{\rm{-ss}}} / G_{Q'}(\bv')$ is Deligne--Mumford, then the same is true for $(\bM_{Q}(\bv,\bw) \times \fg_Q(\bv))^{\zeta\text{\rm{-ss}}} / T_Q(\bv)$.

Now, given $\theta' = (\theta'_i)_{i \in I'}$, define $\theta = (\theta_i)_{i \in I}$ by $\theta_i = \theta'_{i(1)} + \cdots + \theta'_{i(v_i)}$. It follows from the definitions that $\theta' \cdot \zeta' = \theta \cdot \zeta$ and that $\theta_i \le v_i$ for all $i \in I$ if $\theta'_i \le v'_i = 1$ for all $i \in I'$. In particular, we have assumed that $\theta \cdot \zeta \ne 0$ whenever $\theta \ne 0$ and $\theta_i \le v_i$ for all $i \in I$. This implies that $\theta' \cdot \zeta' \ne 0$ for all $\theta' \ne 0$ with $\theta'_i \le v'_i$ for all $i \in I'$. In the proof of \cite[Lemma 2.12]{nakajima}, it is shown\footnote{The condition that the point is in $\mu^{-1}(0)$ in \cite{nakajima} is only used to conclude that $\theta'$ is a ``root''. But we are not requiring this latter condition, so we do not need to impose the condition $\mu=0$.} that if there is a point which is $\zeta'$-semistable but not $\zeta'$-stable, then there exists $\theta' \ne 0$ with $\theta'_i \le v'_i$ for all $i \in I$ such that $\theta' \cdot \zeta' = 0$. So we conclude that $\zeta'$-semistable points in $\bM_{Q'}({\bf v}', {\bf w}')$ are also $\zeta'$-stable. 

So the condition we have assumed implies that $(\bM_Q(\bv,\bw) \times \fg_Q(\bv))^{\zeta\text{\rm{-ss}}} / T_Q(\bv)$ is Deligne--Mumford. By \autoref{prop:semistable} and the first claim, this is equivalent to $\zeta$ being generic for $\ol{\Sigma}$ (note that $\bM_Q(\bv,\bw) \times \fg_Q(\bv)$ is self-dual, and hence quasi-symmetric), and this proves the second claim. The set of $\zeta$ satisfying the condition in the second claim is in the complement of finitely many hyperplanes in $M_\bR^W$, and hence $(M_\bR^W)_{\ol{\Sigma}-{\rm gen}} \ne \emptyset$.
\end{proof}

\section{Action of the fundamental groupoid of the complexified K\"ahler moduli space}

In this section, we use magic windows to show that the derived equivalences of \autoref{cor:equivalences} and \autoref{thm:hyperkaehler} fit together to form a representation of the fundamental group of a space which is a mathematical interpretation of the ``complexified K\"{a}hler moduli space'' studied in $N=2$ superconformal field theory. In particular, we generalize and make precise the physical picture of \cite{HHP}. As in the rest of the paper, we will fix a quasi-symmetric linear representation $X$ of a split reductive group $G$. We consider the $M$-periodic locally finite hyperplane arrangement $\{H_\alpha\subset M_\bR^W\}$ of \autoref{lem:hyperplane}, which is defined so that $\partial(\delta + \ol{\nabla}) \cap M = \emptyset$ for any $\delta \in M_\bR^W$ in the complement of this hyperplane arrangement. We define
\[
\cK_X := \left( M_\bC^W \setminus {\bigcup}_\alpha H_\alpha \otimes \bC \right) / M^W.
\]
For the point $\delta + \ell i \in M_\bC^W$, in order to match our notation with the physics literature, one identifies $\delta$ with the ``$\Theta$ parameters'' and $\ell$ with the ``FI parameters'' of \cite[\S 4.3]{HHP}.

After establishing a convenient combinatorial presentation for the fundamental groupoid $\Pi_1(\cK_X)$ in \autoref{prop:groupoid}, we will establish a categorical representation of this groupoid. To make this precise, we let $\op{Ho}(\dgCat)$ denote the category whose objects are dg-categories and whose morphisms are quasi-isomorphism classes of dg-functors between dg-categories. The main result of this section, \autoref{prop:representation} constructs a functor $F \colon \Pi_1(\cK_X) \to \op{Ho}(\dgCat)$ such that any point in $\cK_X$ is mapped to a category which is canonically identified with ${\rm D}^b(X^{\rm{ss}}(\ell)/G)$ for any $\ell$ such that $X^{\rm{ss}}(\ell) = X^{\rm{s}}(\ell)$.

\subsection{A presentation for the fundamental groupoid of the complement of a hyperplane arrangement}

Let $V$ be a real vector space along with a locally finite hyperplane arrangement $\cA = \{H_\alpha \subset V\}$. Fix a subset $V_{{\rm gen}} \subset V$ with the following property:
\begin{itemize}
\item[] For any finite collection of hyperplanes $H_{\alpha_0},\ldots,H_{\alpha_n} \in \cA$, we let $H'_{\alpha_0},\ldots,H'_{\alpha_n}$ be the same hyperplanes translated so that they pass through the origin. Then there is some $\ell \in V_{{\rm gen}}$ lying in each connected component of $V \setminus (H'_{\alpha_0} \cup \cdots \cup H'_{\alpha_n})$.
\end{itemize}

For simplicity we will also assume that $V_{{\rm gen}}$ is symmetric with respect to the origin, so $\ell \in V_{{\rm gen}}$ if and only if $-\ell \in V_{{\rm gen}}$, although this is not strictly necessary for the result below. Consider the following directed graph:
\begin{itemize}
\item \emph{Vertices}: one vertex $[\delta]$ for each $\delta \in V$ which does not lie on a hyperplane, and
\item \emph{Edges}: one edge $[\delta] \xrightarrow{\ell} [\delta']$ labeled by an element $\ell \in V_{{\rm gen}}$ whenever $\ell$ is not parallel to any of the hyperplanes meeting the line segment $\{(1-t)\delta + t \delta' \mid t\in [0,1]\}$.
\end{itemize}

In the example of interest in this paper, $V = M_\bR^W$ and $V_{{\rm gen}} = \{\ell \mid X^{\rm ss}(\ell) = X^{\rm s}(\ell)\}$.

\begin{defn} \label{defn:groupoid}
We define the groupoid $\Gamma(V,\cA)$ to be the free groupoid on this directed graph modulo the congruence generated by the following equivalences of arrows
\begin{enumerate}
\item $[\delta] \xrightarrow{\ell} [\delta]$ is congruent to the identity morphism for any $\ell$,
\item $[\delta] \xrightarrow{\ell} [\delta'']$ is congruent to the composition of $[\delta] \xrightarrow{\ell} [\delta']$ and $[\delta'] \xrightarrow{\ell} [\delta'']$ whenever all three are valid arrows,\footnote{Note that any hyperplane which meets the convex hull, $\op{Hull}(\delta,\delta',\delta'')$, must also meet one of the boundary line segments, so the condition that all three are valid arrows is equivalent to all three vertices lying in the complement of the hyperplanes, and $\ell$ is not parallel to any hyperplane meeting $\op{Hull}(\delta,\delta',\delta'')$.} and
\item $[\delta] \xrightarrow{\ell} [\delta']$ is congruent to $[\delta] \xrightarrow{\ell'} [\delta']$ as long as $\ell$ and $\ell'$ have the same orientation relative to each hyperplane meeting the line segment $\{(1-t)\delta + t \delta' \mid t\in [0,1]\}$.
\end{enumerate}
\end{defn}

Our goal is to identify $\Gamma(V,\cA)$ with the fundamental groupoid $\Pi_1$ of the topological space $(V \otimes_\bR \bC) \setminus \bigcup_\alpha (H_\alpha \otimes_\bR \bC)$. For every object $[\delta] \in \Gamma(V,\cA)$, we assign the point $\delta + 0 i \in V \otimes_\bR \bC$, and for every morphism $[\delta] \xrightarrow{\ell} [\delta']$ we assign the homotopy class of the path
\[
\gamma(t) = \begin{cases} \delta + 3 t \ell i , & t\in [0,1/3] \\ (2-3t)\delta + (3t-1) \delta' + \ell i, & t \in [1/3,2/3] \\ \delta' + (3-3 t) \ell i , & t\in [2/3,1] \end{cases}
\]
from $\delta + 0i$ to $\delta'+0i$. It is straightforward to show that each of the equivalences of arrows in (1), (2), and (3) correspond to a homotopy of paths, so this assignment defines a functor
\[
\Phi \colon \Gamma(V,\cA) \to \Pi_1(V\otimes_\bR \bC \setminus {\bigcup}_\alpha H_\alpha \otimes_\bR \bC).
\]

\begin{prop} \label{prop:groupoid-equiv}
The functor $\Phi$ is an equivalence of groupoids.
\end{prop}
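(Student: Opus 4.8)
The plan is to verify the two defining properties of an equivalence of groupoids. Essential surjectivity is immediate: since each complexified hyperplane $H_\alpha\otimes_\bR\bC$ has real codimension $2$, the space $U:=(V\otimes_\bR\bC)\setminus\bigcup_\alpha(H_\alpha\otimes_\bR\bC)$ is path connected, and it contains the point $\delta+0i=\Phi([\delta])$ for every $\delta$ in the nonempty real generic locus $V\setminus\bigcup_\alpha H_\alpha$. The groupoid $\Gamma(V,\cA)$ is also connected: given generic $\delta,\delta'$, only finitely many hyperplanes meet the segment $[\delta,\delta']$ by local finiteness, and the defining property of $V_{{\rm gen}}$ supplies an $\ell\in V_{{\rm gen}}$ not parallel to any of them, hence an edge $[\delta]\xrightarrow{\ell}[\delta']$. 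Since both groupoids are connected and $\Phi$ is essentially surjective, it suffices to show $\Phi$ is fully faithful, and for that it is enough to prove that $\Phi$ induces an isomorphism $\pi_1(\Gamma(V,\cA),[\delta_0])\xrightarrow{\sim}\pi_1(U,\delta_0+0i)$ at a single basepoint. Local finiteness causes no difficulty, since any given path or homotopy has compact image and thus meets only finitely many hyperplanes.

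Surjectivity on $\pi_1$ says every loop in $U$ is homotopic to a concatenation of the model paths $\gamma$ used in the definition of $\Phi$. I would establish this by a straightening homotopy resting on two elementary facts: (i) for generic $\delta$ the whole vertical line $\delta+\bR i$ lies in $U$, so the imaginary coordinate may be slid freely above a generic real point; and (ii) writing $H'_\alpha$ for the linear hyperplane through the origin parallel to $H_\alpha$, the open set $\{a+bi:b\notin\bigcup_\alpha H'_\alpha\}$ is $V\times(V\setminus\bigcup_\alpha H'_\alpha)$, whose connected components are products of $V$ with a chamber of the central arrangement $\{H'_\alpha\}$, hence contractible. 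After a general-position homotopy one arranges that the loop meets the real axis only finitely often --- necessarily at generic real points, by the definition of $U$ --- and that in between its imaginary part avoids $\bigcup_\alpha H'_\alpha$ (possible since wherever the imaginary part meets some $H'_\alpha$ the real part must avoid $H_\alpha$, leaving room to push off). Each resulting arc then lies, rel its endpoints, in the closure of one component of the set in (ii), and can be straightened using (i) and (ii) to a model path with constant imaginary part $\ell$ in a single chamber of the relevant central arrangement; the property of $V_{{\rm gen}}$ lets one take $\ell\in V_{{\rm gen}}$. This exhibits the loop as $\Phi$ of a composite of edges.

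Injectivity on $\pi_1$ is the crux. Given a composite of model paths that is null-homotopic in $U$, fix a null-homotopy $H\colon[0,1]^2\to U$ and, after a transversality perturbation, pull back under $H$ both the real hyperplanes $H_\alpha$ and the central hyperplanes $H'_\alpha$ (the latter tested against the imaginary part of $H$). This yields a cell decomposition of the square on each $2$-cell of which the situation is simple --- the real part stays generic, or crosses a single $H_\alpha$, and so on. Crossing each edge of the decomposition changes the composite of model paths by one of the congruences of \autoref{defn:groupoid}, or a composite of them: congruence (2) absorbs the subdivision of a segment of real-part motion, congruence (3) absorbs a change of imaginary direction within a fixed chamber of the relevant central sub-arrangement, and congruence (1) absorbs trivial back-and-forth. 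The delicate cells are neighborhoods of interior points that $H$ sends near a codimension-$2$ flat of $\cA$; there $U$ is locally the product of a Euclidean ball with the complement of an arrangement of concurrent complex lines, and one must check that the relations this contributes to $\pi_1(U)$ follow from congruences (2) and (3) applied to the small loops crossing those lines. Granting this, induction on the number of cells shows the original composite is congruent to the identity in $\Gamma(V,\cA)$.

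The main obstacle is this last point: proving that the homotopies realizable inside $U$ are generated, combinatorially, by the three congruences of \autoref{defn:groupoid}. This is exactly the local analysis near codimension-$2$ flats --- essentially the content of the Salvetti-complex presentation of the fundamental group of a complexified real arrangement, transcribed into the present notation --- together with the bookkeeping needed to organize the cell-by-cell comparison of an arbitrary homotopy. Everything else (essential surjectivity, connectedness of the two groupoids, and the straightening of loops to standard form) is routine. As an alternative to the direct cell analysis one could induct on the number of hyperplanes, relating $\pi_1$ of the complements of $\cA$ and of $\cA\setminus\{H_\alpha\}$ by a van Kampen argument and matching it with the evident functor on the groupoids $\Gamma$, but the codimension-$2$ relation re-enters in the inductive step.
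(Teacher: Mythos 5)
Your outline correctly locates the difficulty, but it does not resolve it: the entire mathematical content of the statement is concentrated in the step you introduce with ``Granting this,'' namely that the relations in $\pi_1$ of the complexified complement coming from codimension-$2$ flats are consequences of the congruences of \autoref{defn:groupoid}. Everything before that point (connectedness, reduction to $\pi_1$ at a basepoint, straightening loops to concatenations of model paths, transversality and cell-by-cell bookkeeping of a null-homotopy) is a sketch of a from-scratch proof of the Salvetti--Paris presentation theorem, which is already available in the literature; the paper simply cites \cite{paris} (see also \cite{salvetti}) for the statement that $\Pi_1$ is generated by positive paths between adjacent chambers modulo the single family of relations identifying any two \emph{minimal} positive paths with the same endpoints. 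What remains after that citation --- and what your proposal omits --- is the verification that these relations hold in $\Gamma(V,\cA)$.

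That verification is not automatic. Concretely, given two minimal positive paths $[\delta_1]\to\cdots\to[\delta_{n+1}]$ (e.g.\ the two galleries around a codimension-$2$ flat), one must show they are congruent. The paper does this via \autoref{lem:minimal-path-relation}: along a minimal positive path no hyperplane is crossed twice, so at each step the label $\pm\ell_0$ has the same orientation as $\delta_{n+1}-\delta_1$ relative to the hyperplane being crossed. Congruence (3) then lets one replace every label by a common $\ell\in V_{\rm gen}$ oriented like $\delta_{n+1}-\delta_1$, and congruence (2) collapses the whole path to the single arrow $[\delta_1]\xrightarrow{\ell}[\delta_{n+1}]$; hence all minimal positive paths with the same endpoints coincide. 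Your assertion that the codimension-$2$ relations ``follow from congruences (2) and (3) applied to the small loops crossing those lines'' is exactly this claim, stated without the orientation argument that makes it true. To complete your proof you would need to supply that argument (or an equivalent), and you would save yourself the surjectivity and cell-decomposition labor by invoking the known presentation rather than re-deriving it.
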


We prove this by comparing with the presentation of the fundamental groupoid given in \cite[\S 2]{paris}. Before doing this, we set up some preparatory results. First, choose a subset $D \subset V$ such that each connected component of $V \setminus \bigcup_\alpha H_\alpha$, which we refer to as a cell, contains exactly one point $\delta \in D$. The cells of $V$ are in bijection with elements of $D$, and $\Gamma(V,\cA)$ is equivalent to the full subcategory $\cC \subset \Gamma(V,\cA)$ having objects $[\delta]$ with $\delta \in D$.

Fix $\ell_0 \in V_{{\rm gen}}$ which is not parallel to any of the hyperplanes in $\cA$. Let $P$ denote the set of arrows $[\delta] \xrightarrow{\pm \ell_0} [\delta']$ such that $\delta$ and $\delta'$ lie in adjacent cells of $V$, i.e., they are separated by a single hyperplane, and the sign of $\pm \ell_0$ is chosen so that the label of the arrow has the same orientation as the vector $\delta' - \delta$ relative to the unique hyperplane separating $\delta$ and $\delta'$. Note that we have thus chosen a single object for each cell and a single arrow between any pair of adjacent objects and hence we have defined a directed graph with vertex set $D$. We let $\op{Free}(D,P)$ denote the free groupoid on this directed graph, and consider the tautological functor $\op{Free}(D,P) \to \Gamma(V,\cA)$.

Define a path to be a composition of arrows in this directed graph together with their formal inverses. It is positive if it is a composition of arrows from the directed graph constructed above (as opposed to taking formal inverses); the length of a path is the number of arrows it uses. A positive path is minimal if there is no shorter positive path with the same starting and ending point. Hence, given $\delta, \delta' \in D$, we can define the distance $d(\delta,\delta')$ to be the length of a minimal path starting at $\delta$ and ending at $\delta'$. If we let $h(\delta, \delta')$ denote the number of hyperplanes $H$ such that $\delta$ and $\delta'$ have opposite orientations with respect to $H$, then a simple argument by induction on $h$ shows that $d(\delta,\delta') = h(\delta,\delta')$.

\begin{lem} \label{lem:minimal-path-relation}
Pick a minimal positive path $[\delta_1] \xrightarrow{\ell_1} [\delta_2] \xrightarrow{\ell_2} \cdots \xrightarrow{\ell_n} [\delta_{n+1}]$ where each $\ell_i$ is either $\ell_0$ or $-\ell_0$. For all $1 \le i \le n$, $\delta_{n+1}-\delta_1$ has the same orientation as $\ell_i$ relative to the hyperplane that separates the cells containing $\delta_i$ and $\delta_{i+1}$.
\end{lem}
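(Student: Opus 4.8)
The plan is to use minimality to force the path to cross each of its hyperplanes exactly once, and then to deduce the orientation statement from the defining property of the arrows in $P$. For each $i$, step $i$ of the path crosses a single hyperplane $H_i$, namely the one separating the cells containing $\delta_i$ and $\delta_{i+1}$; write $H_i = \{f_i = c_i\}$ for a linear functional $f_i$ and a constant $c_i$. By the definition of $P$, the label $\ell_i \in \{\pm\ell_0\}$ is chosen so that $\ell_i$ and the displacement $\delta_{i+1}-\delta_i$ have the same orientation relative to $H_i$, i.e. $\op{sign}(f_i(\ell_i)) = \op{sign}(f_i(\delta_{i+1}) - f_i(\delta_i))$.

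First I would show that $H_1,\dots,H_n$ are pairwise distinct. Any positive path of length $n$ changes the orientation of its endpoints relative to a hyperplane $H$ exactly when $H$ occurs an odd number of times among $H_1,\dots,H_n$, so $h(\delta_1,\delta_{n+1}) \le \#\{\text{distinct } H_j\} \le n$, and a repeat among the $H_j$ would make the first inequality or the second strict, hence $h(\delta_1,\delta_{n+1}) < n$. Since the path is minimal, $n = d(\delta_1,\delta_{n+1}) = h(\delta_1,\delta_{n+1})$ by the identity $d=h$ recorded just before the lemma, so no repeat is possible. In particular neither the sub-path through steps $1,\dots,i-1$ nor the sub-path through steps $i+1,\dots,n$ crosses $H_i$.

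Now fix $i$. The previous observation shows that $\delta_1,\dots,\delta_i$ all lie on one side of $H_i$ and $\delta_{i+1},\dots,\delta_{n+1}$ all lie on the other, so $\op{sign}(f_i(\delta_i)-c_i) = \op{sign}(f_i(\delta_1)-c_i)$ and $\op{sign}(f_i(\delta_{i+1})-c_i) = \op{sign}(f_i(\delta_{n+1})-c_i)$, and these two signs are opposite because $H_i$ separates $\delta_1$ from $\delta_{n+1}$. A short sign chase then yields $\op{sign}(f_i(\delta_{i+1})-f_i(\delta_i)) = \op{sign}(f_i(\delta_{n+1})-f_i(\delta_1))$, which combined with the displayed equality for $\ell_i$ gives $\op{sign}(f_i(\ell_i)) = \op{sign}(f_i(\delta_{n+1})-f_i(\delta_1))$, i.e. $\ell_i$ and $\delta_{n+1}-\delta_1$ have the same orientation relative to $H_i$, as claimed.

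I expect the only real content to be the distinctness of the $H_i$: it rests on the identity $d(\delta,\delta') = h(\delta,\delta')$ stated before the lemma together with the elementary remark that a repeated hyperplane among $H_1,\dots,H_n$ forces $h(\delta_1,\delta_{n+1}) < n$, contradicting minimality. Everything after that is routine bookkeeping with signs of linear functionals on the two sides of $H_i$.
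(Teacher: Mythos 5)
Your proof is correct and takes essentially the same route as the paper's: both use the identity $d=h$ to conclude that a minimal positive path crosses no hyperplane more than once, then observe that $\delta_1,\dots,\delta_i$ and $\delta_{i+1},\dots,\delta_{n+1}$ lie on opposite sides of $H_i$ and read off the orientation from the defining convention for the arrows in $P$. The only difference is that you spell out the counting argument behind the distinctness of the crossed hyperplanes, which the paper leaves implicit.
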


\begin{proof}
Pick $i$ and let $H$ be the hyperplane separating $\delta_i$ and $\delta_{i+1}$. Since $d(\delta_1,\delta_{n+1}) = h(\delta_1,\delta_{n+1})$, the minimal positive path cannot cross a hyperplane more than once, and in particular, cannot cross $H$ more than once. So the orientations of $\delta_1, \delta_2, \dots, \delta_i$ are all the same relative to $H$, and the orientations of $\delta_{i+1}, \delta_{i+2}, \dots, \delta_{n+1}$ are all the same relative to $H$ (but different from the first set). In particular, $\delta_{n+1} - \delta_1$ has the same orientation as $\delta_{i+1} - \delta_i$ relative to $H$, and by definition, this has the same orientation as $\ell_i$.
\end{proof}

\begin{proof}[Proof of \autoref{prop:groupoid-equiv}]
It follows from \autoref{defn:groupoid} that the functor $\op{Free}(D,P) \to \Gamma(V,\cA)$ is essentially surjective and surjective on Hom sets: by (1) and (2) every object is isomorphic to some $[\delta]$ with $\delta \in D$, by (2) every arrow labeled by $\ell$ can be decomposed into a composition of arrows labeled by $\ell$ between adjacent cells, and by (3) each of these arrows is equivalent to the same arrow labeled by $\pm \ell_0$ (or its inverse). Furthermore, by \cite[Theorem 2.1]{paris}\footnote{This reference assumes that there are finitely many hyperplanes, but it is easy to deduce the locally finite situation from this.} (see also \cite{salvetti}), the composition
\[
\op{Free}(D,P) \to \Gamma(V,\cA) \xrightarrow{\Phi} \Pi_1(V \otimes_\bR \bC \setminus {\bigcup}_\alpha H_\alpha \otimes_\bR \bC)
\]
is essentially surjective and surjective on Hom sets as well, and the Hom sets of the fundamental groupoid are quotients of the arrows in $\op{Free}(D,P)$ by the smallest congruence which identifies minimal positive paths in $\op{Free}(D,P)$ whenever they have the same beginning and ending point. Therefore, in order to show that $\Phi$ is an equivalence, it suffices to show that any two minimal positive paths in $\op{Free}(D,P)$ are identified in $\Gamma(V,\cA)$. Given a minimal positive path $[\delta_1] \xrightarrow{\ell_1} \cdots \xrightarrow{\ell_n} [\delta_{n+1}]$ where each $\ell_i$ is either $\ell_0$ or $-\ell_0$, \autoref{lem:minimal-path-relation} together with (3) implies that we can replace all of the arrow labels by $\delta_{n+1} - \delta_1$. Now (2) implies that this composition is the same as $[\delta_1] \xrightarrow{\delta_{n+1} - \delta_1} [\delta_{n+1}]$, so any two minimal positive paths are identified.
\end{proof}

\subsubsection{Equivariance with respect to translations} 
Next let us assume that $V = L \otimes_\bZ \bR$ is the real vector space generated by a lattice $L$, and that the locally finite hyperplane arrangement $\{H_\alpha \subset L_\bR\}$ is invariant under the action of $L$ by translation.

\begin{defn} \label{defn:larger_groupoid}
Let $\tilde{\Gamma}(L,\cA)$ be the enlargement of the groupoid $\Gamma(L_\bR,\cA)$ defined above which is obtained by formally adding an arrow $[\delta]  \leadsto [\delta + m]$ for every $\delta$ in the complement of the $H_\alpha$ and every $m \in L$. We impose the additional relations
\begin{enumerate}
\item arrows of the form $\leadsto$ commute with those of the form $\xrightarrow{\ell}$ in the sense that 
\[
([\delta] \xrightarrow{\ell} [\delta'] \leadsto [\delta' + m]) \sim ([\delta] \leadsto [\delta+m] \xrightarrow{\ell} [\delta' + m]) 
\] 
whenever the arrow $\xrightarrow{\ell}$ is valid, and
\item the arrows $[\delta] \leadsto [\delta + m]$ and $[\delta + m] \leadsto [\delta + m + m']$ compose to $[\delta] \leadsto [\delta + m + m']$.
\end{enumerate}
\end{defn}

The new morphisms $[\delta] \leadsto [\delta + m]$ correspond to the deck transformations of the covering map $L_\bC \to L_\bC / L$. 

\begin{prop} \label{prop:groupoid}
The isomorphism $\Phi \colon \Gamma(L_\bR,\cA) \to \Pi_1(L_\bC \setminus \bigcup_\alpha H_\alpha \otimes \bC)$ extends to an isomorphism
\[
\tilde{\Phi} \colon \tilde{\Gamma}(L,\cA) \to \Pi_1\left( \left(L_\bC \setminus {\bigcup}_\alpha H_\alpha \otimes \bC \right) / L \right)
\]
compatible with the canonical faithful functor $\Pi_1(L_\bC \setminus \bigcup_\alpha H_\alpha \otimes \bC) \subset \Pi_1\left( \left(L_\bC \setminus \bigcup_\alpha H_\alpha \otimes \bC \right) / L \right)$.
\end{prop}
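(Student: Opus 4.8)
The plan is to recognize $\tilde\Gamma(L,\cA)$ as an \emph{orbit groupoid} for a translation action and then deduce the statement from \autoref{prop:groupoid-equiv} together with the standard description of the fundamental groupoid of a quotient by a free, properly discontinuous action. Since the arrangement $\cA=\{H_\alpha\subset L_\bR\}$ is $L$-invariant, translation by $m\in L$ is a well-defined automorphism of $\Gamma(L_\bR,\cA)$: it sends $[\delta]$ to $[\delta+m]$ and an arrow $[\delta]\xrightarrow{\ell}[\delta']$ to $[\delta+m]\xrightarrow{\ell}[\delta'+m]$, and it preserves validity of arrows and each of the congruences (1)--(3) of \autoref{defn:groupoid}, all of which depend only on the positions of $\delta,\delta'$ relative to $\cA$. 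Thus $L$ acts on $\Gamma(L_\bR,\cA)$, and I claim \autoref{defn:larger_groupoid} is precisely the standard presentation of the orbit groupoid $L\backslash\!\backslash\Gamma(L_\bR,\cA)$: the generators $[\delta]\leadsto[\delta+m]$ are the tautological morphisms $x\to m\cdot x$, relation (2) is their compatibility with the group law of $L$, and relation (1) is naturality with respect to the morphisms of $\Gamma(L_\bR,\cA)$ (it suffices to impose naturality against the generating arrows $\xrightarrow{\ell}$). Hence $\tilde\Gamma(L,\cA)\cong L\backslash\!\backslash\Gamma(L_\bR,\cA)$.

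Next I would write $Z:=L_\bC\setminus\bigcup_\alpha H_\alpha\otimes\bC$ and observe that $\Phi$ is strictly $L$-equivariant for the translation action on its source and the action $z\mapsto z+m$ on $\Pi_1(Z)$: on objects $\Phi([\delta+m])=(\delta+m)+0i=m\cdot\Phi([\delta])$, and on arrows the three-segment path of \autoref{prop:groupoid-equiv} is manifestly translation-covariant. As $\Phi$ is an equivalence, it induces an equivalence of orbit groupoids $\tilde\Gamma(L,\cA)\cong L\backslash\!\backslash\Gamma(L_\bR,\cA)\xrightarrow{\ \sim\ }L\backslash\!\backslash\Pi_1(Z)$ restricting to $\Phi$ on the canonical copies of $\Gamma(L_\bR,\cA)$ and $\Pi_1(Z)$. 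Separately, $L$ is a lattice of translations of $L_\bC\cong\bC^n$, hence acts freely and properly discontinuously; the removed locus $\bigcup_\alpha H_\alpha\otimes\bC$ is $L$-invariant, so $L$ acts freely and properly discontinuously on $Z$ and $q\colon Z\to Z/L$ is a regular covering with deck group $L$ ($Z$ is an open submanifold of $\bC^n$, so covering-space theory applies even though the arrangement is only locally finite). The remaining ingredient is the standard identification: for such an action the tautological functor $L\backslash\!\backslash\Pi_1(Z)\to\Pi_1(Z/L)$ --- the evident map on objects, $q_*$ on paths, sending the generator $x\to m\cdot x$ to the deck loop at $q(x)$ labelled by $m$ --- is an equivalence, compatible with $\Pi_1(Z)\hookrightarrow L\backslash\!\backslash\Pi_1(Z)$ and with $q_*\colon\Pi_1(Z)\to\Pi_1(Z/L)$. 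Composing, $\tilde\Phi$ is the composite $\tilde\Gamma(L,\cA)\cong L\backslash\!\backslash\Gamma(L_\bR,\cA)\xrightarrow{\ \sim\ }L\backslash\!\backslash\Pi_1(Z)\xrightarrow{\ \sim\ }\Pi_1(Z/L)$; it extends $\Phi$, is compatible with $\Pi_1(Z)\subset\Pi_1(Z/L)$, and is an equivalence (the asserted ``isomorphism'').

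The one step with genuine content is the last, that $L\backslash\!\backslash\Pi_1(Z)\to\Pi_1(Z/L)$ is an equivalence; if one prefers not to quote it, I would argue directly from covering-space theory. Essential surjectivity is clear since $q$ is surjective. Fullness follows from unique path lifting: a path in $Z/L$ from $q(x)$ to $q(y)$ lifts to a unique path in $Z$ starting at $x$ and ending at $m\cdot y$ for a unique $m\in L$, which is exactly a morphism of $L\backslash\!\backslash\Pi_1(Z)$. Faithfulness follows from unique homotopy lifting: if the $q$-images of two such lifted paths are homotopic rel endpoints downstairs, the homotopy lifts rel endpoints upstairs, forcing equality in $L\backslash\!\backslash\Pi_1(Z)$. (This is the groupoid refinement of the exact sequence $1\to\pi_1(Z)\to\pi_1(Z/L)\to L\to 1$ of a regular covering.) I expect the main difficulty to be purely bookkeeping; everything is formal once \autoref{prop:groupoid-equiv} is available. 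It is worth noting that one should resist building $\tilde\Phi$ by hand, sending $[\delta]\leadsto[\delta+m]$ to the obvious loop obtained by pushing $\delta$ slightly into the imaginary directions and back: such loops need not satisfy the naturality relation (1), whereas the orbit-groupoid route produces the correct $\tilde\Phi$ automatically.
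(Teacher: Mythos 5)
Your argument is correct and takes essentially the same route as the paper: both identify $\tilde{\Gamma}(L,\cA)$ with the semidirect product (action groupoid) $\Gamma(L_\bR,\cA)\rtimes L$, use the strict $L$-equivariance of $\Phi$, and then invoke the identification of the fundamental groupoid of the quotient by a free, properly discontinuous action with the corresponding orbit/action groupoid --- the paper by citing Brown's \emph{Topology and Groupoids} [11.2.1, 11.5.1], you by supplying the covering-space lifting argument directly. The one small quibble is your parenthetical description of the comparison functor: the bare deck generator $x\leadsto m\cdot x$ must be sent to the identity of $q(x)$ (a chosen ``deck loop labelled by $m$'' is not canonical and need not satisfy the naturality relation, exactly the pitfall you warn against at the end); your actual fullness/faithfulness argument via path and homotopy lifting is unaffected.
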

\begin{proof}
This is a straightforward application of the description of the fundamental groupoid of a quotient of a Hausdorff space by a discontinuous group action \cite[Chapter 11]{BrownGroupoids}. In particular, the groupoid $\tilde{\Gamma}(L,\cA)$ we have defined is actually the semidirect product $\Gamma(L_\bR,\cA) \rtimes L$ for the action of the group $L$ on $\Gamma(L_\bR,\cA)$ by translation (which is compatible with the isomorphism $\Phi$). By \cite[11.2.1]{BrownGroupoids}, the fundamental groupoid of the quotient space is canonically identified with the orbit groupoid $\Gamma(L_\bR,\cA) \git L$, which is canonically identified with $(\Gamma(L_\bR,\cA) \rtimes L) / N$ in \cite[11.5.1]{BrownGroupoids}, where $N \subset \Gamma(L_\bR,\cA) \rtimes L$ is the normal subgroupoid consisting of arrows of the form $[\delta] \leadsto [\delta + m]$. Finally, $N$ is contractible, because $L$ acts freely on $\Gamma(L_\bR,\cA)$, so the quotient morphism $\Gamma(L_\bR,\cA) \rtimes L \to (\Gamma(L_\bR,\cA) \rtimes L) / N$ is an equivalence as well.
\end{proof}

\subsection{A categorical representation of the groupoid $\tilde{\Gamma}(M^W,\cA)$}

We now return to the specific hyperplane arrangement of interest in this paper, where $X$ is a quasi-symmetric linear representation of a split reductive group $G$. We assume that $\ol{\Sigma}$ generates $M_\bR$ and that $(M_\bR^W)_{\ol{\Sigma}\text{\rm{-gen}}} \neq \emptyset$, so \autoref{thm:magic_windows} implies that for $\delta \in M_\bR^W$ outside of a certain locally finite $M^W$-periodic hyperplane arrangement $\cA := \{H_\alpha \subset M_\bR^W\}$ (described in \autoref{lem:hyperplane}) the restriction functor is an equivalence $\op{res}_{X^{\rm{ss}}(\ell)} \colon \cM(\delta + \ol{\nabla}) \to {\rm D}^b(X^{\rm{ss}}(\ell)/G)$ when $X^{\rm{ss}}(\ell) = X^{\rm{s}}(\ell)$.

\begin{prop} \label{prop:representation}
Assigning $F([\delta]):=\cM(\delta + \ol{\nabla})$, assigning $F([\delta] \xrightarrow{\ell} [\delta'])$ to the functor
\[
\cM(\delta + \ol{\nabla}) \xrightarrow{\op{res}_{X^{\rm ss}(\ell)}} {\rm D}^b(X^{\rm{ss}}(\ell)/G) \xrightarrow{\op{res}_{X^{\rm ss}(\ell)}^{-1}} \cM(\delta' + \ol{\nabla}),
\]
and assigning $F([\delta] \leadsto [\delta + m])$ to the functor
\[
\cM(\delta + \ol{\nabla}) \xrightarrow{(\cO_X \otimes m) \otimes (-)} \cM(\delta+m+\ol{\nabla})
\]
extends uniquely to a representation of the groupoid $F \colon \tilde{\Gamma}(M^W,\cA) \to \op{Ho}(\dgCat)$.
\end{prop}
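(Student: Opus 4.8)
The plan is to verify that the three assignments described respect each of the defining relations of $\tilde{\Gamma}(M^W,\cA)$ from Definitions~\ref{defn:groupoid} and~\ref{defn:larger_groupoid}. Since $\tilde{\Gamma}(M^W,\cA)$ is presented as a free groupoid on a directed graph modulo an explicit congruence, a functor out of it is uniquely determined by its values on objects and generating arrows, \emph{provided} those values satisfy the congruence; so the entire content is the relation-checking. The objects pose no issue: $F([\delta]) = \cM(\delta + \ol{\nabla})$ is well-defined because $\delta$ lies outside $\cA$, so $\partial(\delta + \ol{\nabla}) \cap M = \emptyset$ and \autoref{thm:magic_windows} applies.

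\textbf{Relations among the $\xrightarrow{\ell}$ arrows.} For relation (1) of \autoref{defn:groupoid}, the arrow $[\delta] \xrightarrow{\ell} [\delta]$ is sent to $\op{res}_{X^{\rm ss}(\ell)}^{-1} \circ \op{res}_{X^{\rm ss}(\ell)}$, which is canonically the identity on $\cM(\delta+\ol{\nabla})$. For relation (2), the composite of $[\delta]\xrightarrow{\ell}[\delta']$ and $[\delta']\xrightarrow{\ell}[\delta'']$ is sent to $\op{res}^{-1}_{X^{\rm ss}(\ell)}\circ\op{res}_{X^{\rm ss}(\ell)}\circ\op{res}^{-1}_{X^{\rm ss}(\ell)}\circ\op{res}_{X^{\rm ss}(\ell)}$, and the inner pair cancels to give exactly the functor assigned to $[\delta]\xrightarrow{\ell}[\delta'']$ — note that all three arrows are labeled by the \emph{same} $\ell$, so only one GIT quotient $X^{\rm ss}(\ell)$ enters and $\op{res}_{X^{\rm ss}(\ell)}$ is defined on each of the three magic-window categories by \autoref{thm:magic_windows} (the hypothesis $X^{\rm ss}(\ell)=X^{\rm s}(\ell)$ holds because $\ell \in V_{\rm gen}$). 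Relation (3) is the only substantive point among these: if $\ell$ and $\ell'$ have the same orientation relative to every hyperplane of $\cA$ meeting the segment from $\delta$ to $\delta'$, we must see that $\op{res}^{-1}_{X^{\rm ss}(\ell')}\circ\op{res}_{X^{\rm ss}(\ell')} = \op{res}^{-1}_{X^{\rm ss}(\ell)}\circ\op{res}_{X^{\rm ss}(\ell)}$ as functors $\cM(\delta+\ol{\nabla})\to\cM(\delta'+\ol{\nabla})$. Here I would argue that the hypothesis forces $X^{\rm ss}(\ell) = X^{\rm ss}(\ell')$: the hyperplanes of $\cA$ are the $M^W$-translates of the facet-hyperplanes of $\ol{\nabla}$, which by \autoref{cor:parallel-facets} are parallel to facets of $\ol{\Sigma}$, and the GIT walls for $X/G$ inside $M_\bR^W$ lie among the hyperplanes parallel to facets of $\ol{\Sigma}$ (from \autoref{prop:semistable} and the wall-and-chamber discussion), so $\delta$ and $\delta'$ lying on the same side of all such walls gives $X^{\rm ss}(\delta)=X^{\rm ss}(\delta')$; one then matches this up with the labels $\ell,\ell'$. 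Once $X^{\rm ss}(\ell)=X^{\rm ss}(\ell')$, the two composites are literally the same functor. I should be slightly careful that ``same orientation relative to the hyperplanes meeting the segment'' is exactly the condition pinning down the GIT chamber, using that a hyperplane separating $\delta$ from $\delta'$ is one meeting the open segment.

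\textbf{Relations involving the $\leadsto$ arrows.} The functor $\cM(\delta+\ol{\nabla})\to\cM(\delta+m+\ol{\nabla})$, $\mathcal F\mapsto (\cO_X\otimes m)\otimes\mathcal F$, is well-defined because tensoring by the character $m\in M^W$ carries $\cO_X\otimes U$ with $\op{Char}(U)\subseteq\delta+\ol{\nabla}$ to $\cO_X\otimes(U\otimes m)$ with $\op{Char}(U\otimes m)\subseteq\delta+m+\ol{\nabla}$, and $U\otimes m$ is again irreducible; it is an equivalence with inverse given by tensoring by $-m$, which handles the composition relation (2) of \autoref{defn:larger_groupoid} immediately. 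For the commutation relation (1), I need: $(\cO_X\otimes m)\otimes\bigl(\op{res}^{-1}_{X^{\rm ss}(\ell)}\op{res}_{X^{\rm ss}(\ell)}(\mathcal F)\bigr) \cong \op{res}^{-1}_{X^{\rm ss}(\ell)}\op{res}_{X^{\rm ss}(\ell)}\bigl((\cO_X\otimes m)\otimes\mathcal F\bigr)$. This follows because $\op{res}_{X^{\rm ss}(\ell)}$ is $\cO_X^G$-linear and more specifically commutes with tensoring by the line bundle $\cO_X\otimes m$ (this line bundle and its restriction are the images of the invertible object $\cO_X\otimes m$ under the symmetric-monoidal restriction ${\rm D}^b(X/G)\to{\rm D}^b(X^{\rm ss}(\ell)/G)$), so the square of functors commutes up to canonical isomorphism both before and after applying $\op{res}^{-1}$; since $X^{\rm ss}(\ell)$ is unchanged by the translation (the translate $\delta+m$ is also outside $\cA$ by $M^W$-periodicity, and $\ell$ itself is the label, unchanged), the identity holds in $\op{Ho}(\dgCat)$.

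\textbf{The main obstacle} is relation (3) of \autoref{defn:groupoid}: making rigorous that two labels $\ell,\ell'$ with the same orientation relative to the hyperplanes of $\cA$ crossing the segment $[\delta,\delta']$ actually induce the same GIT quotient, hence literally the same restriction functor. Everything else is formal cancellation of adjacent $\op{res}$/$\op{res}^{-1}$ pairs and the symmetric-monoidal compatibility of restriction with tensoring by a character. I would spell out the identification of the relevant hyperplanes with the GIT walls carefully (invoking \autoref{cor:parallel-facets}, \autoref{prop:semistable}, and the Dolgachev--Hu wall-and-chamber structure recalled in \S\ref{sect:zonotope}), and then note that, with the notation and hypothesis $V_{\rm gen}=\{\ell\mid X^{\rm ss}(\ell)=X^{\rm s}(\ell)\}$ fixed before \autoref{prop:groupoid}, the claim follows. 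Having checked all the congruence generators, the universal property of the free groupoid modulo a congruence yields the unique functor $F\colon\tilde{\Gamma}(M^W,\cA)\to\op{Ho}(\dgCat)$, completing the proof.
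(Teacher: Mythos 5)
Your handling of relations (1), (2), and the two $\leadsto$-relations matches the paper's proof: formal cancellation of adjacent $\op{res}/\op{res}^{-1}$ pairs and the compatibility of restriction with tensoring by the line bundle $\cO_X \otimes m$. The gap is in relation (3), which you correctly identify as the substantive point but then resolve incorrectly. Your proposed argument is that the hypothesis of (3) forces $X^{\rm{ss}}(\ell) = X^{\rm{ss}}(\ell')$, after which the two composites are literally equal. This is false: relation (3) only constrains the orientations of $\ell$ and $\ell'$ relative to the hyperplanes of $\cA$ \emph{meeting the segment} $[\delta,\delta']$. If the segment meets no hyperplane the condition is vacuous and $\ell,\ell'$ may lie in arbitrary (different) GIT chambers; if it meets one hyperplane $H$, the condition only fixes which side of the linear translate of $H$ the labels lie on, leaving them free to be separated by GIT walls not parallel to $H$. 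In either case $X^{\rm{ss}}(\ell) \neq X^{\rm{ss}}(\ell')$ in general, so the two restriction functors are genuinely different functors and the desired equivalence cannot be obtained by identifying them literally.

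The paper's argument instead first perturbs so the segment meets $\cA$ generically and uses relation (2) to factor the arrow into steps crossing at most one hyperplane. When no hyperplane separates $\delta$ from $\delta'$ one has $\cM(\delta+\ol{\nabla}) = \cM(\delta'+\ol{\nabla})$ and the composite $\op{res}^{-1}\circ\op{res}$ is the identity \emph{for every} $\ell$, so the $\ell$-dependence disappears. When exactly one hyperplane separates them, the key input is \autoref{lem:adjoints}: the composite $\op{res}_{X^{\rm{ss}}(\ell)}^{-1}\circ\op{res}_{X^{\rm{ss}}(\ell)}$ is identified with the inclusion $\cM(\delta+\ol{\nabla}) \subset \cM(\delta_0+\ol{\nabla})$ followed by the right (resp.\ left) adjoint of the inclusion $\cM(\delta'+\ol{\nabla}) \subset \cM(\delta_0+\ol{\nabla})$, according to the orientation $\kappa=\pm 1$ of $\ell$ relative to the separating hyperplane. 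Since this adjoint description depends only on $\kappa$ and not on $\ell$ itself, relation (3) follows even when $X^{\rm{ss}}(\ell)$ and $X^{\rm{ss}}(\ell')$ differ. This use of \autoref{lem:adjoints} (itself resting on the quantization-commutes-with-reduction theorem) is the one non-formal categorical ingredient of the proof, and it is absent from your proposal; without it, or some substitute, the argument does not close.
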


In particular, combining this with \autoref{prop:groupoid} gives a categorical representation of $\Pi_1(\cK_X)$ such that each point is mapped to a category which is canonically isomorphic to ${\rm D}^b(X^{\rm{ss}}(\ell)/G)$ under restriction. Most of the proof of this proposition is formal. The only categorical input is the following:

\begin{lem} \label{lem:adjoints}
Let $\ell \in M_\bR^W$ be such that $X^{\rm{ss}}(\ell) = X^{\rm{s}}(\ell)$, and let $\delta \in M_\bR^W$ be such that $\partial(\delta + \ol{\nabla}) \cap M \neq \emptyset$ but for $0 < t \ll 1$ we have $\partial(\delta \pm t \ell + \ol{\nabla}) \cap M = \emptyset$. Then the composition
\[
\cM(\delta + \ol{\nabla}) \xrightarrow{\op{res}_{X^{\rm{ss}}(\ell)}} {\rm D}^b(X^{\rm{ss}}(\ell)/G) \xrightarrow{\op{res}_{X^{\rm{ss}}(\ell)}^{-1}} \cM(\delta + t \ell + \ol{\nabla})
\]
is the right adjoint of the inclusion $\cM(\delta+t \ell + \ol{\nabla}) \subset \cM(\delta + \ol{\nabla})$. Likewise, the composition
\[
\cM(\delta + \ol{\nabla}) \xrightarrow{\op{res}_{X^{\rm{ss}}(\ell)}} {\rm D}^b(X^{\rm{ss}}(\ell)/G) \xrightarrow{\op{res}_{X^{\rm{ss}}(\ell)}^{-1}} \cM(\delta - t \ell + \ol{\nabla})
\]
is the left adjoint of the inclusion $\cM(\delta - t \ell + \ol{\nabla}) \subset \cM(\delta + \ol{\nabla})$.
\end{lem}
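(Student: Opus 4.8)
The plan is to recognize the two perturbed categories $\cM_\pm := \cM(\delta \pm t\ell + \ol\nabla)$ as Halpern--Leistner grade-restriction windows for the linearization $\ell$, and then to transport the semiorthogonal decomposition of ${\rm D}^b(X/G)$ from \cite{HL}. Throughout write $\cM_0 := \cM(\delta + \ol\nabla)$ and $X^{\rm ss} := X^{\rm ss}(\ell) = X^{\rm s}(\ell)$, and let $\lambda_0,\dots,\lambda_n$ be the canonical one-parameter subgroups of \S\ref{sect:magic_windows} attached to $\ell$; note $\pair{\lambda_i}{\ell} \neq 0$ for all $i$ because $X^{\rm ss}(\ell) = X^{\rm s}(\ell)$ forces $\ell$ into the interior of its GIT chamber.

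First I would check two combinatorial facts: (i) for $0 < t \ll 1$ the finite set $(\delta \pm t\ell + \ol\nabla) \cap M$ is independent of $t$ and is contained in $(\delta + \ol\nabla)\cap M$ — indeed $M$ is discrete, so no lattice point outside the compact set $\delta + \ol\nabla$ lies within some distance $\varepsilon_0 > 0$ of it, and a small translation by $\pm t\ell$ cannot move such a point into $\delta + \ol\nabla$; hence $\cM_\pm \subseteq \cM_0$. (ii) Since $\partial(\delta \pm t\ell + \ol\nabla)\cap M = \emptyset$ by hypothesis, the argument proving \autoref{rem:alternate_main_thm} gives $\cM_\pm = \cM\big((\delta \pm t\ell) + \nabla_{\pm\ell}\big)$, so \autoref{lem:fully_faithful} — in the variant of its footnote, applicable since $\pair{\lambda_i}{\pm\ell} \neq 0$ — yields windows $w_\pm$ with $\cM_\pm \subseteq \cG^{w_\pm}$. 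Now $\op{res}_{X^{\rm ss}}\colon \cM_\pm \to {\rm D}^b(X^{\rm ss}/G)$ is an equivalence by \autoref{thm:magic_windows}, and $\op{res}_{X^{\rm ss}}\colon \cG^{w_\pm} \to {\rm D}^b(X^{\rm ss}/G)$ is an equivalence by \cite[Theorem 2.10]{HL}; since a full replete subcategory on which an equivalence restricts to an equivalence must be everything, $\cM_\pm = \cG^{w_\pm}$.

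Next I would invoke the semiorthogonal decomposition of \cite{HL}: $\cG^{w_+}$ is an admissible subcategory of ${\rm D}^b(X/G)$ whose orthogonal complements, on both sides, are generated by complexes (co)supported on the unstable locus $X \setminus X^{\rm ss}$, hence killed by $\op{res}_{X^{\rm ss}}$. Let $r\colon {\rm D}^b(X/G) \to \cG^{w_+}$ be the right adjoint to the inclusion $\tilde\iota$. For $B \in \cM_0$ the cone of the counit $\tilde\iota rB \to B$ lies in that complement, so $\op{res}_{X^{\rm ss}}(rB) \xrightarrow{\ \sim\ } \op{res}_{X^{\rm ss}}(B)$; since $rB \in \cG^{w_+} = \cM_+$, this says exactly that $rB = (\op{res}_{X^{\rm ss}}|_{\cM_+})^{-1}\op{res}_{X^{\rm ss}}(B)$, i.e.\ $r|_{\cM_0}$ is the composite in the statement. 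Because $\cM_+,\cM_0$ are full in ${\rm D}^b(X/G)$ and $r(\cM_0) \subseteq \cM_+$, the adjunction $\op{Hom}_{{\rm D}^b(X/G)}(\tilde\iota A, B) \cong \op{Hom}_{\cG^{w_+}}(A, rB)$ restricts to exhibit $r|_{\cM_0}$ as right adjoint to $\cM_+ \subset \cM_0$. The left-adjoint statement is entirely parallel: use the left adjoint $l\colon {\rm D}^b(X/G) \to \cG^{w_-} = \cM_-$, whose unit again has cone (co)supported on $X \setminus X^{\rm ss}$.

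The main obstacle, and the only genuinely non-formal step, is Step 1: verifying that after the infinitesimal perturbation the combinatorial ``magic window'' coincides on the nose with a GIT grade-restriction window $\cG^{w_\pm}$ for $\ell$, and pinning down the exact form of the \cite{HL} semiorthogonal decomposition so that the complement of $\cG^{w_\pm}$ is annihilated by restriction — it is this pairing of facts that forces the abstract adjoint projection to equal $\op{res}_{X^{\rm ss}}^{-1}\circ \op{res}_{X^{\rm ss}}$. The rest (the inclusions $\cM_\pm \subseteq \cM_0$, stabilization of $(\delta \pm t\ell + \ol\nabla)\cap M$ as $t \to 0^+$, and the formal restriction of the adjunction) is routine.
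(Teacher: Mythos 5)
Your proof is correct, but it reaches the adjunction by a different route than the paper. The paper's proof never identifies $\cM(\delta\pm t\ell+\ol{\nabla})$ with a full grade-restriction window: it simply observes, via the weight bounds \eqref{eqn:weight_bounds}, that any $F\in\cM(\delta+\nabla_\ell)$ lies in ${\rm D}^b(X/G)_{\ge w}$ while any $G\in\cM(\delta+\ol{\nabla})$ lies in ${\rm D}^b(X/G)_{<w}$ for a single window parameter $w$, so that the quantization-commutes-with-reduction theorem \cite[Theorem 3.26]{HL} gives $\RHom_{X/G}(F,G)\simeq\RHom_{X^{\rm{ss}}(\ell)/G}(F|,G|)$ --- which is precisely the adjunction isomorphism, since $\op{res}_{X^{\rm{ss}}(\ell)}^{-1}\op{res}_{X^{\rm{ss}}(\ell)}(G)$ restricts to $G|_{X^{\rm{ss}}(\ell)}$. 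You instead upgrade the containment $\cM_\pm\subseteq\cG^{w_\pm}$ from \autoref{lem:fully_faithful} to an equality (correct: both are replete full subcategories on which restriction is an equivalence onto ${\rm D}^b(X^{\rm{ss}}(\ell)/G)$) and then transport the adjoints of the window inclusion coming from the semiorthogonal decomposition of \cite{HL}. Both arguments rest on the same weight estimates; yours buys the extra observation --- not stated in the paper --- that the perturbed magic windows are literally GIT windows and that the two adjoints are the window-shift projections, at the cost of invoking the full admissibility/SOD structure of \cite[Theorem 2.10]{HL} rather than a single Hom-comparison that only needs the one-sided containments. Two minor points to tighten: $\pair{\lambda_i}{\ell}\neq 0$ holds because the Kempf one-parameter subgroups of the stratification attached to $\ell$ satisfy $\pair{\lambda_i}{\ell}>0$ by construction, not by a chamber-interior argument; and what restriction kills is the triangulated subcategory generated by the non-$\cG^w$ factors of the semiorthogonal decomposition (which contains the unit and counit cones), all supported on $X\setminus X^{\rm{ss}}(\ell)$ --- this is slightly different from saying the orthogonal complements themselves are so generated, though the conclusion you draw is the same.
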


\begin{proof}
Let $F \in \cM(\delta + t \ell + \ol{\nabla})$ for $0 < t \ll 1$, or equivalently $F \in \cM(\delta + \nabla_\ell)$. Then the first claim of the lemma amounts to the claim that
\begin{equation} \label{eqn:restriction}
\RHom_{X/G} (F,G) \simeq \RHom_{X^{\rm{ss}}(\ell)/G} (F|_{X^{\rm{ss}}(\ell)}, G|_{X^{\rm{ss}}(\ell)})
\end{equation}
for all $G \in \cM(\delta + \ol{\nabla})$. As in \S \ref{sect:magic_windows} we use the stratification of the unstable locus in GIT associated to the linearization $\ell$, which specifies a sequence of distinguished one-parameter subgroups $\lambda_0, \ldots, \lambda_n$ with $\pair{\lambda_i}{\ell}>0$ and fixed loci $Z^{\rm ss}_i \subset X^{\lambda_i}$. Observe from the proof of \autoref{lem:fully_faithful} and specifically \autoref{eqn:weight_bounds}, we have
\[
\op{wt}_{\lambda_i} (F|_{Z_i}) \in \pair{\lambda_i}{\delta} + (-\eta_{\lambda_i}/2,\eta_{\lambda_i}/2]
\]
for any of these distinguished $\lambda_i$. In particular if we choose a very small constant $0<a\ll 1$ and let $w = (w_0,\ldots,w_n)$ with $w_i = \pair{\lambda_i}{\delta} - \eta_{\lambda_i}/2 + a$, then in the notation of \cite{HL} we have $F \in {\rm D}^b(X/G)_{\geq w}$ and similarly $G \in {\rm D}^b(X/G)_{< w}$, so the generalized ``quantization commutes with reduction'' theorem of \cite[Theorem 3.26]{HL} implies that the restriction map \eqref{eqn:restriction} is an equivalence.

The second claim amounts to showing the same equivalence \eqref{eqn:restriction}, but this time when $F \in \cM(\delta + \ol{\nabla})$ and $G \in \cM(\delta + \nabla_{-\ell})$. This time \autoref{eqn:weight_bounds} implies that
\[
\op{wt}_{\lambda_i} (G|_{Z_i}) \in \pair{\lambda_i}{\delta} + [-\eta_{\lambda_i}/2,\eta_{\lambda_i}/2).
\]
So choosing $w_i = \pair{\lambda_i}{\delta} - \eta_{\lambda_i}/2$, we have $F \in {\rm D}^b(X/G)_{\geq w}$ and $G \in {\rm D}^b(X/G)_{<w}$, so again the quantization commutes with reduction theorem implies that \eqref{eqn:restriction} is an equivalence.
\end{proof}

\begin{proof}[Proof of \autoref{prop:representation}]
Since $\Gamma(V,\cA)$ is defined as a free groupoid modulo some congruence, it suffices to verify that the arrows described in (1), (2), and (3) of \autoref{defn:groupoid} give isomorphic functors. (1) is immediate, because when $\delta = \delta'$, the corresponding functor $\cM(\delta +\ol{\nabla}) \to \cM(\delta + \ol{\nabla})$ is the identity functor regardless of what $\ell$ is. The congruence (2) follows since the following diagram commutes up to isomorphism of functors
\[
\begin{gathered}
\xymatrix{\cM(\delta + \ol{\nabla}) \ar[dr]^{\op{res}_{X^{\rm{ss}}(\ell)}} \ar[rr] & & \cM(\delta' + \ol{\nabla})  \ar[dr]^{\op{res}_{X^{\rm{ss}}(\ell)}} \ar[rr] & & \cM(\delta'' + \ol{\nabla}) \\ & {\rm D}^b(X^{\rm{ss}}(\ell)/G) \ar[ur]^{\op{res}_{X^{\rm{ss}}(\ell)}^{-1}} \ar[rr]^{\op{id}} & & {\rm D}^b(X^{\rm{ss}}(\ell)/G) \ar[ur]^{\op{res}_{X^{\rm{ss}}(\ell)}^{-1}} } 
\end{gathered}.
\]

So the only thing which really requires a verification is the congruence (3). First observe that the category $\cM(\delta + \ol{\nabla})$ does not change (as a subcategory of ${\rm D}^b(X/G)$) if $\delta$ lies in the interior of the hyperplane arrangement and we perturb $\delta$ slightly. Likewise, the functor $F([\delta] \xrightarrow{\ell} [\delta'])$ is insensitive to small perturbations of $\delta$ and $\delta'$. We may therefore assume that the line segment $\{(1-t) \delta + t \delta' \mid t\in [0,1]\}$ meets the hyperplane arrangement generically, i.e., that no two hyperplanes meet the segment at the same value of $t$. Using the congruence (2), we can therefore factor this arrow as a composition of a sequence of arrows, each of which meets at most one hyperplane, so it suffices to verify (3) in this case.

When no hyperplanes separate $\delta$ and $\delta'$, then the equivalence is quasi-isomorphic to the identity functor $\cM(\delta+\ol{\nabla}) = \cM(\delta'+\ol{\nabla})$, and does not depend on $\ell$. So assume that the line segment between $\delta$ and $\delta'$ meets a unique hyperplane at the point $\delta_0$, and let $\kappa = 1$ if $\ell$ has the same orientation as $\delta'-\delta$ relative to this hyperplane and $\kappa = -1$ otherwise. Then we have
\[
\cM(\delta + \ol{\nabla}) = \cM(\delta_0 - \kappa t \ell + \ol{\nabla}) \quad \text{ and } \quad \cM(\delta' + \ol{\nabla}) = \cM(\delta_0 + \kappa t\ell + \ol{\nabla})
\] 
for $ 0<t\ll 1$. \autoref{lem:adjoints} implies that the equivalence $F([\delta] \xrightarrow{\ell} [\delta'])$ is just the composition of the inclusion $\cM(\delta+\ol{\nabla}) \subset \cM(\delta_0 + \ol{\nabla})$ followed by the right (respectively, left) adjoint of the inclusion $\cM(\delta' + \ol{\nabla}) \subset \cM(\delta_0 + \ol{\nabla})$ when $\kappa = 1$ (respectively, $\kappa = -1$). This shows that $F([\delta] \xrightarrow{\ell} [\delta'])$ only depends on the orientation of $\delta$ relative to the hyperplane separating $\delta$ and $\delta'$ in this case, which verifies (3).

Finally, in order to show that this in fact extends from $\Gamma(M_\bR^W,\cA)$ to $\tilde{\Gamma}(M^W,\cA)$, it suffices to show that the two functors are equivalent
\[
F([\delta] \xrightarrow{\ell} [\delta'] \leadsto [\delta' + m]) \simeq F([\delta] \leadsto [\delta+m] \xrightarrow{\ell} [\delta' + m]).
\]
This follows from commutativity up to equivalence of the following diagram
\[
\begin{gathered}
\xymatrixcolsep{5pc}\xymatrix{\cM(\delta+\ol{\nabla}) \ar[d]_{(-)\otimes (\cO_X \otimes m)} \ar[r]^-{\op{res}_{X^{\rm{ss}}(\ell)}} & {\rm D}^b(X^{\rm{ss}}(\ell) / G) \ar[d]^{(-)\otimes (\cO_X \otimes m)|_{X^{\rm{ss}}(\ell)}} & \cM(\delta'+\ol{\nabla}) \ar[l]_-{\op{res}_{X^{\rm{ss}}(\ell)}} \ar[d]^{(-)\otimes (\cO_X \otimes m)} \\ 
\cM(\delta+\ol{\nabla}) \ar[r]^-{\op{res}_{X^{\rm{ss}}(\ell)}} & {\rm D}^b(X^{\rm{ss}}(\ell) / G) & \cM(\delta'+\ol{\nabla}) \ar[l]_-{\op{res}_{X^{\rm{ss}}(\ell)}}}
\end{gathered}. \qedhere
\]
\end{proof}

\begin{rem}
These actions of $\Pi_1(\cK_X)$ on ${\rm D}^b(X^{\rm{ss}}(\ell)/G)$ generalize the example studied in \cite{DonovanSegal2}, which constructed a mixed braid group action on the derived category of certain deformations of the minimal resolution of the ${\rm A}_k$ surface singularity. There they realized these deformations as the GIT quotient of a certain self-dual linear representation of a torus, and they used magic windows to deduce the braid relations. In fact, Donovan and Segal show that the representation of the mixed braid group is faithful, and it is an interesting question as to when the larger class of examples produced by our methods lead to faithful representations.
\end{rem}

\begin{rem} \label{rmk:K-theory}
The action of $\Pi_1(\cK_X)$ on ${\rm D}^b(X^{\rm ss}(\ell)/G)$ induces an action on its $K$-theory. Assume that $G$ is connected, so that we may index irreducible representations $V(\chi)$ by their highest weight $\chi$. Choose $\delta \in M_\bR^W$ in the complement of the hyperplane arrangement $\cA$ determined by $\ol{\nabla}$ (defined in \autoref{lem:hyperplane}). The restriction map $\cM(\delta + \ol{\nabla}) \to {\rm D}^b(X^{\rm ss}(\ell)/G)$ is then an equivalence, so by \autoref{prop:K_theory}, $K_0(\cM(\delta + \ol{\nabla}))$ has a natural basis given by the classes
\[
\cV(\chi) := [\cO_X \otimes V(\chi)] \in K_0(\cM(\delta + \ol{\nabla}))
\]
for those $\chi \in M^+ \cap (-\rho + \delta + \frac{1}{2} \ol{\Sigma})$. Our goal is to explicitly describe the action of the equivalence
\[
\Phi := K_0(F([\delta] \xrightarrow{\ell} [\delta'])) \colon K_0 (\cM(\delta +\ol{\nabla})) \to K_0(\cM(\delta'+\ol{\nabla}))
\]
in the respective bases, when $\delta'\in M_\bR^W$ is separated from $\delta$ by exactly one hyperplane $H \in \cA$.

Let $\delta_0$ be the point on the line segment joining $\delta$ and $\delta'$ which lies on the hyperplane $H$ separating $\delta$ and $\delta'$, and choose an $\varepsilon \in M_\bR^W$ which is very small, generic for $\ol{\Sigma}$, and oriented so that $\delta_0 + \varepsilon$ lies on the same side of $H$ as $\delta'$. Then we have
\[
\cM(\delta + \ol{\nabla}) = \cM(\delta_0 - t \varepsilon + \ol{\nabla}) \quad \text{and} \quad \cM(\delta'+\ol{\nabla}) = \cM(\delta_0 + t \varepsilon + \ol{\nabla})
\]
for $0<t\ll 1$. In particular, the basis of $K_0(\cM(\delta + \ol{\nabla}))$ consists of $\cV(\chi)$ with $\chi  \in (-\rho + \delta_0 + \frac{1}{2} \ol{\Sigma}_{- \varepsilon}) \cap M^+$ and the basis of $K_0(\cM(\delta' + \ol{\nabla}))$ consists of $\cV(\chi)$ with $\chi \in (-\rho + \delta_0 + \frac{1}{2} \ol{\Sigma}_{\varepsilon}) \cap M^+$. For each $\chi \in (-\rho + \delta_0 + \frac{1}{2} \ol{\Sigma}_{-\varepsilon}) \cap M^+$, we wish to express $\Phi(\cV(\chi))$ in the corresponding basis for $K_0(\cM(\delta'+\ol{\nabla}))$. There are two cases:
\begin{enumerate}[(1)]
\item If $\chi \in -\rho + \delta_0 + \frac{1}{2} \ol{\Sigma}_{\varepsilon}$ as well, which happens if and only if $\chi$ lies in the interior of $-\rho + \delta_0 + \frac{1}{2} \ol{\Sigma}$, then $\Phi(\cV(\chi)) = \cV(\chi)$.

\item Otherwise, following \S\ref{sec:SVDB}, there is an anti-dominant $\lambda$ which is constant on a facet of $\ol{\Sigma}$ such that $\pair{\lambda}{\chi} \le \pair{\lambda}{\mu}$ for all $\mu \in -\rho + \delta_0 +\varepsilon + r_\chi \ol{\Sigma}$.
\begin{itemize}
\item If $\pair{\lambda}{\ell} < 0$, then we use the class $[D^\vee_{\lambda,\chi}] \in K_0({\rm D}^b(X/G))$, whose restriction to $X^{\rm{ss}}(\ell)/G$ vanishes by \autoref{prop:D_unstable}. The class of $[D^\vee_{\lambda,\chi}]$ can be written as a sum (see \autoref{rmk:euler-char}) whose first term is $\cV(\chi)$ and whose remaining terms are multiples of $\cV(\nu)$ for $\nu$ for which $(r_\nu,p_\nu)$ is lexicographically smaller than $(r_\chi,p_\chi)$ in the notation of \S\ref{sec:SVDB} with respect to $-\rho + \delta_0+ \varepsilon + \frac{1}{2}\ol{\Sigma}$. Thus we can obtain an expression for $\cV(\chi)|_{X^{\rm{ss}}(\ell)}$ as a linear combination of these $\cV(\nu)|_{X^{\rm{ss}}(\ell)/G}$.
\item If $\pair{\lambda}{\ell} > 0$, then by \autoref{prop:C_unstable} the restriction of $[C_{\lambda,\chi}]$ to $X^{\rm{ss}}(\ell) / G$ vanishes, and again we can derive an expression for $\cV(\chi)|_{X^{\rm{ss}}(\ell)}$ as a linear combination of $\cV(\nu)|_{X^{\rm{ss}}(\ell)}$ where $(r_\nu,p_\nu)$ is smaller than $(r_\chi,p_\chi)$.
\end{itemize}
\end{enumerate}
We can repeat this process until we have expressed each $\cV(\chi)|_{X^{\rm{ss}}(\ell)}$ with $\chi \in (-\rho +\delta_0 + \frac{1}{2} \ol{\Sigma}) \cap M^+$ as an integer linear combination of $\cV(\nu)|_{X^{\rm{ss}}(\ell)}$ with $\nu \in (-\rho +\delta_0 + \frac{1}{2} \ol{\Sigma}_{\varepsilon}) \cap M^+$. The resulting linear combination is unique, and is precisely the column of the matrix for the equivalence $\Phi$ corresponding to the basis vector $\cV(\chi)$ for $\chi \in -\rho+\delta_0+\frac{1}{2} \ol{\Sigma}_{-\varepsilon}$. Note that there might be $\chi \in -\rho + \delta_0 + \frac{1}{2} \ol{\Sigma}$ which lie in neither $-\rho + \delta_0 + \frac{1}{2} \ol{\Sigma}_\varepsilon$ nor $-\rho + \delta_0 + \frac{1}{2} \ol{\Sigma}_{-\varepsilon}$, but it might still be necessary to compute $\cV(\chi)|_{X^{\rm{ss}}(\ell)}$, as the following example shows:

\begin{ex}
Continuing the example studied in \autoref{fig:sym3}, we consider $G = {\rm GL}_2(k)$, $X = T^* {\rm Sym}^3(k^2)$ and $\delta_0 = (\frac{1}{2}, \frac{1}{2})$, $\ell=(1,1)$, and $\varepsilon = (\frac{1}{4},\frac{1}{4})$. There are four dominant weights $\chi \in (-\rho + \delta_0 + \frac{1}{2} \ol{\Sigma}) \cap M^+$ which do not lie in $-\rho + \delta_0 + \frac{1}{2} \ol{\Sigma}_{\varepsilon}$. We illustrate the procedure for computing $\Phi(\cV(\chi))$ for two of these $\chi$ in \autoref{fig:changebasis}. Ultimately we compute (note all of the cancellation):
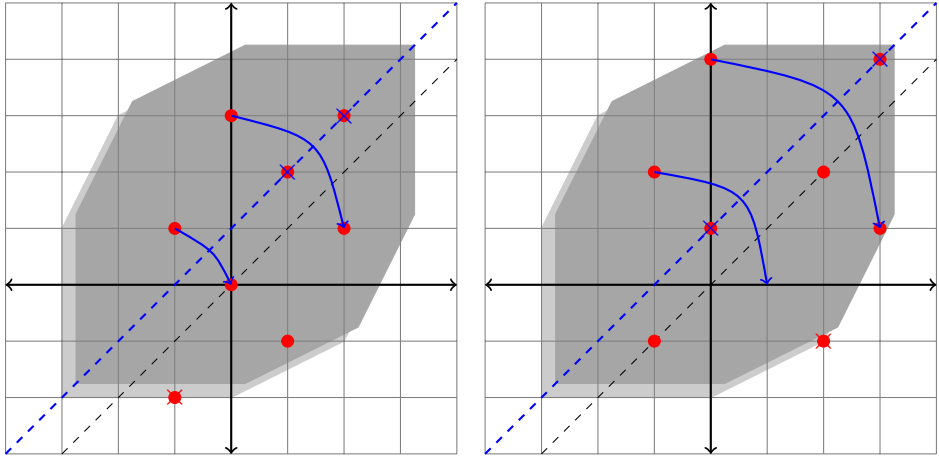
\begin{figure}
\begin{tabular}{ll}
\begin{tikzpicture}[scale=.75]
\filldraw[black!20!white] (-3,1) -- (-2,3) -- (0,4) -- (3,4) -- (3,1) -- (2,-1) -- (0,-2) -- (-3,-2) -- cycle;
\filldraw[black!35!white] (-3+.25,1+.25) -- (-2+.25,3+.25) -- (0+.25,4+.25) -- (3+.25,4+.25) -- (3+.25,1+.25) -- (2+.25,-1+.25) -- (0+.25,-2+.25) -- (-3+.25,-2+.25) -- cycle;
\draw[step=1, color=gray] (-4,-3) grid (4,5);
\draw[<->, thick] (-4,0) -- (4,0);
\draw[<->, thick] (0,-3) -- (0,5);
\draw[dashed] (-3,-3) -- (4,4);
\draw[dashed, thick, color=blue] (-4,-3) -- (4,5);
\foreach \i in {(2,3), (1,2), (0,3), (-1,1), (2,1), (0,0), (1,-1), (-1,-2)}
{ \filldraw[red] \i circle (3pt); }
\node[red] at (-1,-2) {$\times$};
\draw[->,thick, color=blue] (0,3) .. controls (1.6,2.6) .. (2,1);
\draw[->,thick, color=blue] (-1,1) .. controls (-.3,.6) .. (0,0);
\node[blue] at (1,2) {$\times$};
\node[blue] at (2,3) {$\times$};
\end{tikzpicture}
&
\begin{tikzpicture}[scale=.75]
\filldraw[black!20!white] (-3,1) -- (-2,3) -- (0,4) -- (3,4) -- (3,1) -- (2,-1) -- (0,-2) -- (-3,-2) -- cycle;
\filldraw[black!35!white] (-3+.25,1+.25) -- (-2+.25,3+.25) -- (0+.25,4+.25) -- (3+.25,4+.25) -- (3+.25,1+.25) -- (2+.25,-1+.25) -- (0+.25,-2+.25) -- (-3+.25,-2+.25) -- cycle;
\draw[step=1, color=gray] (-4,-3) grid (4,5);
\draw[<->, thick] (-4,0) -- (4,0);
\draw[<->, thick] (0,-3) -- (0,5);
\draw[dashed] (-3,-3) -- (4,4);
\draw[dashed, thick, color=blue] (-4,-3) -- (4,5);
\foreach \i in {(2,-1), (-1,-1), (3,1), (2,2), (0,1), (3,4), (-1,2), (0,4)}
{ \filldraw[red] \i circle (3pt); }
\node[red] at (2,-1) {$\times$};
\draw[->,thick, color=blue] (-1,2) .. controls (.7,1.7) .. (1,0);
\draw[->,thick, color=blue] (0,4) .. controls (2.5,3.5) .. (3,1);
\node[blue] at (0,1) {$\times$};
\node[blue] at (3,4) {$\times$};
\end{tikzpicture}
\end{tabular}
\caption{\footnotesize The blue dashed line is the line of reflection for the $\rho$-shifted action of the nontrivial element in $S_2$, and the curved blue arrows indicate reflection across that line. The lighter region is $-\rho + \delta_0 + \frac{1}{2}\ol{\Sigma}$ and the darker region is $-\rho + \delta_0 + \varepsilon + \frac{1}{2}\ol{\Sigma}$. In order to compute $\Phi(\cV(\chi))$ for $\chi \in (-\rho + \delta_0 -\varepsilon + \frac{1}{2}\ol{\Sigma}) \cap M^+$, we need to rewrite $\cV(\chi)|_{X^{\rm{ss}}(\ell)}$ for dominant weights in the lighter region in terms of $\cV(\nu)|_{X^{\rm{ss}}(\ell)}$ for dominant weights in the darker region. \emph{Left hand side}: For $\chi=(-1,-2)$ we have $\lambda = (0,1)$ and use the class $[C_{\lambda,\chi}]$ to rewrite $\cV(\chi)$. \emph{Right hand side}: For $\chi = (2,-1)$ we have $\lambda = (-1,2)$ and use the class $[C_{\lambda,\chi}]$ to rewrite $\cV(\chi)$.} \label{fig:changebasis}
\end{figure}
\begin{align*}
\Phi(\cV(i,j)) &= \cV(i,j) \quad \text{ for } (i,j) \in -\rho + \delta_0 + \frac{1}{2} \ol{\Sigma}^{\rm int},\\
\Phi(\cV(-2,-2)) &= \cV(2,0) - \cV(2,2),\\
\Phi(\cV(-1,-2)) &= \cV(1,-1), \\
\Phi(\cV(0,-2)) &= \cV(2,-1) + \cV(1,0) + \cV(0,0) - \cV(3,1) + \cV(3,3).
\end{align*}
We use the complex $[C_{\lambda,\chi}]$ with $\lambda = (0,1)$ to compute these last three expressions. Note, however, that $(2,-1) \notin -\rho + \delta_0  + \frac{1}{2}\ol{\Sigma}_{\varepsilon}$, hence $\cV(2,-1)$ is not in the final basis for $K_0(\cM(\delta'+\ol{\nabla}))$. We must use $[C_{\lambda,\chi}]$ with $\lambda = (-1,2)$ to rewrite
\[
\cV(2,-1)|_{X^{\rm{ss}}(\ell)} = \cV(-1,-1)|_{X^{\rm{ss}}(\ell)} + \cV(2,2)|_{X^{\rm{ss}}(\ell)} + \cV(1,0)|_{X^{\rm{ss}}(\ell)}
\]
and hence
\[
\Phi(\cV(0,-2)) = \cV(-1,-1) + \cV(2,2) + 2\cV(1,0) + \cV(0,0) - \cV(3,1) + \cV(3,3).
\]
\end{ex}

\end{rem}

\subsection{Extensions to categories of singularities and hyperk\"ahler quotients}

As in \S \ref{sect:hyperkaehler}, there is a version of \autoref{prop:representation} in which $\Pi_1(\cK)$ acts on the derived category of a hyperk\"{a}hler reduction of a symplectic linear representation, and more generally on the graded derived category of singularities ${\rm D}^b_{\op{sing}}(X/G\times \bG_m,W)$ for a graded LG model $W \colon X/G\times \bG_m \to \bA^1/\bG_m$. We assume throughout that the zonotope $\ol{\Sigma} \subset M_\bR$ associated to $X$ as a representation of $G$ spans $M_\bR$ and that $(M_\bR^W)_{\ol{\Sigma}\text{-{\rm gen}}} \neq \emptyset$.

In order to state our results, it is convenient to introduce a bit of machinery. Given a module category $\cC$ over the symmetric monoidal $\infty$-category $\Perf(\bA^1/\bG_m)^\otimes$, we define ${\rm D}_{\op{sing}}(\cC)$ to be the idempotent complete stable $\infty$-category obtained in the following way: First define $\cC_0$ to be the functor category $\cC_0 = \op{Fun}_{\Perf(\bA^1 /\bG_m)^\otimes} (\Perf(\{0\}/\bG_m), \cC)$. This category will have an endofunctor $ (-) \otimes \cO_{\bA^1}\langle 1 \rangle$ coming from the $\Perf(\bA^1/\bG_m)^\otimes$-module structure, and there is a natural transformation of endofunctors
\[
\beta \colon \op{id}_{\cC_0} \to (-) \otimes \cO_{\bA^1}\langle 1 \rangle[2].
\]
We define ${\rm D}_{\op{sing}}(\cC)$ to be the stable idempotent completion of the $\infty$-category whose objects are objects of $\cC_0$ and whose morphism spaces are
\[
\op{Map}_{{\rm D}_{\op{sing}}(\cC)}(E,F) = \op{hocolim}_p \op{Map}_{\cC_0}(E,F \otimes \cO_{\bA^1}\langle p \rangle [2p]),
\]
where the colimit is taken with respect to the natural transformation $\beta$. This construction induces a functor
\[
{\rm D}_{\op{sing}} \colon \op{Ho}(\Perf(\bA^1 / \bG_m)^\otimes \text{\rm{-Mod}}) \to \op{Ho}(\dgCat).
\]
For a more detailed and rigorous treatment of this construction, we refer the reader to \cite{Lurie}, which treats the analogous construction over the sphere spectrum rather than over a field $k$ of characteristic $0$ as we do here.

The content of the proof of \autoref{cor:sing} above is that if $W \colon X/G \times \bG_m \to \bA^1/\bG_m$ is a graded Landau--Ginzburg model, then if we consider the polytope $\ol{\nabla} \subset M_\bR$ corresponding to $X$ as a representation of $G$ and define $\cM(\delta + \ol{\nabla}) \subset {\rm D}^b(X/G\times \bG_m)$ by simply ignoring the $\bG_m$ action, then the restriction functor $\cM(\delta + \ol{\nabla}) \to {\rm D}^b(X^{\rm{ss}}(\ell)/G \times \bG_m)$ is an equivalence of $\Perf(\bA^1/\bG_m)^\otimes$-module categories when $\partial(\delta + \ol{\nabla}) \cap M = \emptyset$ and $X^{\rm{ss}}(\ell)=X^{\rm{s}}(\ell)$. Applying the construction ${\rm D}_{\op{sing}}(-)$ to the restriction functor then gives an equivalence
\[
{\rm D}_{\op{sing}}(\cM(\delta + \ol{\nabla})) \simeq {\rm D}_{\op{sing}}({\rm D}^b(X^{\rm{ss}}(\ell)/G\times \bG_m)) \simeq {\rm D}^b_{\op{sing}}(X^{\rm{ss}}(\ell)/G\times \bG_m).
\]
Furthermore, for any character $m$ of $G$, the functor $(\cO_X \otimes m) \otimes (-) \colon \cM(\delta + \ol{\nabla}) \to \cM(\delta+\ol{\nabla})$ is canonically a map of $\Perf(\bA^1/\bG_m)^\otimes$-module categories. Therefore the representation of \autoref{prop:representation} canonically lifts to a functor $F \colon \Pi_1(\cK_X) \simeq \tilde{\Gamma}(M^W,\cA) \to \op{Ho}(\Perf(\bA^1/\bG_m)^\otimes\text{\rm{-Mod}})$, and composing with ${\rm D}_{\op{sing}} \colon \op{Ho}(\Perf(\bA^1/\bG_m)^\otimes\text{\rm{-Mod}}) \to \op{Ho}(\dgCat)$ gives the following:

\begin{cor} \label{cor:action_on_singularities}
Let $X$ be a linear representation of $G \times \bG_m$ which is quasi-symmetric as a representation of $G$, and let $W \colon X \to \bA^1$ be a $G$-invariant and $\bG_m$-equivariant map. Then assigning $F(\delta) = {\rm D}_{\op{sing}}(\cM(\delta + \ol{\nabla}))$, assigning $F([\delta] \xrightarrow{\ell} [\delta'])$ to the functor
\[
{\rm D}_{\op{sing}}(\cM(\delta + \ol{\nabla})) \xrightarrow{\op{res}_{X^{\rm{ss}}(\ell)}} {\rm D}^b_{\op{sing}}(X^{\rm{ss}}(\ell)/G\times \bG_m,W) \xrightarrow{\op{res}_{X^{\rm{ss}}(\ell)}^{-1}} {\rm D}_{\op{sing}}(\cM(\delta + \ol{\nabla})),
\]
and assigning $F([\delta] \leadsto [\delta + m])$ to the functor
\[
{\rm D}_{\op{sing}}(\cM(\delta + \ol{\nabla})) \xrightarrow{(\cO_X \otimes m) \otimes (-)} {\rm D}_{\op{sing}}(\cM(\delta + \ol{\nabla}))
\]
defines a functor $F \colon \tilde{\Gamma}(M^W,\cA) \to \op{Ho}(\dgCat)$, where $\cA$ is the $M^W$-periodic hyperplane arrangement in $M_\bR^W$ determined by the polytope $\ol{\nabla} \subset M_\bR$. In particular, this defines a representation of $\Pi_1(\cK_X) \simeq \tilde{\Gamma}(M^W,\cA)$ under the equivalence of \autoref{prop:groupoid}.
\end{cor}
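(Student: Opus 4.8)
The plan is to lift the representation of \autoref{prop:representation} from $\op{Ho}(\dgCat)$ to $\op{Ho}(\Perf(\bA^1/\bG_m)^\otimes\text{\rm{-Mod}})$ and then to compose with the functor ${\rm D}_{\op{sing}}$ recalled above. Since ${\rm D}_{\op{sing}}$ is a functor, it automatically respects all of the defining congruences of $\tilde{\Gamma}(M^W,\cA)$, so the entire content of the corollary lies in producing that lift.

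First I would check that each of the three families of arrows appearing in \autoref{prop:representation} is realized by a morphism of $\Perf(\bA^1/\bG_m)^\otimes$-module categories. By the footnote to \autoref{cor:sing}, the subcategory $\cM(\delta+\ol{\nabla})\subset{\rm D}^b(X/G\times\bG_m)$ is a $\Perf(\bA^1/\bG_m)^\otimes$-module subcategory (either via its explicit generators together with the generators $\cO_{\bA^1}\langle n\rangle$ of $\Perf(\bA^1/\bG_m)$, or via the identification $\cM(\delta+\ol{\nabla})=\cG^w$ and the stability of the grade restriction rules under tensoring with $W^\ast\Perf(\bA^1/\bG_m)$), and \autoref{cor:sing} shows that the restriction functor $\op{res}_{X^{\rm{ss}}(\ell)}$ and its inverse $\op{res}_{X^{\rm{ss}}(\ell)}^{-1}$ are maps of such module categories. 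For the deck-transformation arrows $[\delta]\leadsto[\delta+m]$ one observes that, since $m$ is pulled back from $BG$ and $W$ is $G$-invariant, the line bundle $\cO_X\otimes m$ commutes canonically with $W^\ast E$ for $E\in\Perf(\bA^1/\bG_m)$, so $(\cO_X\otimes m)\otimes(-)$ is a module functor. Thus all of the assignments of \autoref{prop:representation} lift to $\op{Ho}(\Perf(\bA^1/\bG_m)^\otimes\text{\rm{-Mod}})$.

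Next I would verify that the congruences of \autoref{defn:groupoid} and \autoref{defn:larger_groupoid} still hold at this level. In the proof of \autoref{prop:representation} each of them was established by exhibiting a quasi-isomorphism between functors built out of the canonical restriction, adjunction (via \autoref{lem:adjoints}), and tensor structures on ${\rm D}^b(X/G\times\bG_m)$; all of these structures, including the one-sided adjoints of the module inclusions $\cM(\delta'+\ol{\nabla})\subset\cM(\delta_0+\ol{\nabla})$, are $\Perf(\bA^1/\bG_m)^\otimes$-linear (again because $W$ is $G$-invariant), so the witnessing quasi-isomorphisms are quasi-isomorphisms of module functors. This gives a functor $F\colon\tilde{\Gamma}(M^W,\cA)\to\op{Ho}(\Perf(\bA^1/\bG_m)^\otimes\text{\rm{-Mod}})$ refining the one of \autoref{prop:representation}. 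Composing with ${\rm D}_{\op{sing}}$ produces $F\colon\tilde{\Gamma}(M^W,\cA)\to\op{Ho}(\dgCat)$: on objects ${\rm D}_{\op{sing}}(\cM(\delta+\ol{\nabla}))\simeq{\rm D}^b_{\op{sing}}(X^{\rm{ss}}(\ell)/G\times\bG_m,W)$ whenever $X^{\rm{ss}}(\ell)=X^{\rm{s}}(\ell)$, by the argument of \autoref{cor:sing}, and on the arrows $[\delta]\xrightarrow{\ell}[\delta']$ and $[\delta]\leadsto[\delta+m]$ one obtains exactly the functors named in the statement because ${\rm D}_{\op{sing}}$ carries $\op{res}_{X^{\rm{ss}}(\ell)}$, $\op{res}_{X^{\rm{ss}}(\ell)}^{-1}$ and $(\cO_X\otimes m)\otimes(-)$ to the corresponding functors. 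Finally, \autoref{prop:groupoid} identifies $\tilde{\Gamma}(M^W,\cA)$ with $\Pi_1(\cK_X)$, which yields the representation of the fundamental groupoid.

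The step that requires genuine care --- though it is bookkeeping rather than a conceptual obstacle --- is the content of the first two paragraphs: one must check that the \emph{entire} diagram underlying \autoref{prop:representation}, including the abstractly defined inverse equivalences $\op{res}_{X^{\rm{ss}}(\ell)}^{-1}$ and all the quasi-isomorphisms witnessing congruences (1)--(3), can be organized using only module functors and module quasi-isomorphisms over $\Perf(\bA^1/\bG_m)^\otimes$. Once this is done, the passage to ${\rm D}_{\op{sing}}$ and the identification with $\Pi_1(\cK_X)$ are formal consequences of functoriality of ${\rm D}_{\op{sing}}$ and of \autoref{cor:sing}, \autoref{prop:representation}, and \autoref{prop:groupoid}.
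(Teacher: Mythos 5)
Your proposal is correct and follows essentially the same route as the paper: the paper's argument is precisely to observe that the restriction functors, their inverses, and $(\cO_X\otimes m)\otimes(-)$ are all morphisms of $\Perf(\bA^1/\bG_m)^\otimes$-module categories, so that the representation of \autoref{prop:representation} lifts canonically to $\op{Ho}(\Perf(\bA^1/\bG_m)^\otimes\text{\rm{-Mod}})$, and then to compose with the functor ${\rm D}_{\op{sing}}$. Your added care in checking that the quasi-isomorphisms witnessing the congruences (including the adjoints from \autoref{lem:adjoints}) are themselves module-functor quasi-isomorphisms is exactly the bookkeeping the paper leaves implicit in the phrase ``canonically lifts.''
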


Finally, combining this corollary with the results of \autoref{sect:hyperkaehler} gives a groupoid action on the derived category of a hyperk\"ahler quotient. Applying the proof of \autoref{thm:hyperkaehler} verbatim gives the following:

\begin{cor} \label{cor:representation_hyperkahler}
Let $X$ be a symplectic linear representation of a split reductive group $G$ with moment map $\mu \colon X \to \fg^\dual$. Let $W \colon X \times \fg \to \bA^1$ be the $G$-invariant $\bG_m$-equivariant map induced by $\mu$. Regarding $X \oplus \fg$ as a quasi-symmetric representation of $G$ with corresponding zonotope $\ol{\Sigma}$ we assume that $\ol{\Sigma}$ spans $M_\bR$ and $(M_\bR^W)_{\ol{\Sigma}\text{-{\rm gen}}} \neq \emptyset$. Then for any $\ell$ for which $X_0^{\rm{ss}}(\ell) = X_0^{\rm{s}}(\ell)$ and $\delta$ such that $\partial(\delta + \ol{\nabla}) \cap M = \emptyset$, we have an equivalence
\[
{\rm D}_{\op{sing}}(\cM(\delta + \ol{\nabla})) \simeq {\rm D}^b(X_0^{\rm{ss}}(\ell)/G),
\]
through which \autoref{cor:action_on_singularities} gives an action of $\Pi_1(\cK_{X \oplus \fg})$ on ${\rm D}^b(X_0^{\rm{ss}}(\ell)/G)$.
\end{cor}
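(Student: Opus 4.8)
The plan is to run the chain of equivalences from the proof of \autoref{thm:hyperkaehler} in a way that is manifestly compatible with the groupoid representation already furnished by \autoref{cor:action_on_singularities}. First I would apply \autoref{cor:action_on_singularities} to the quasi-symmetric representation $X \oplus \fg$ of $G$, viewed as a representation of $G \times \bG_m$ via the scaling action of $\bG_m$ on $X \times \fg$, together with the $G$-invariant $\bG_m$-equivariant map $W \colon X \times \fg \to \bA^1$ induced by $\mu$; here $\ol{\nabla}$ and the $M^W$-periodic hyperplane arrangement $\cA \subset M_\bR^W$ are those determined by the zonotope $\ol{\Sigma}$ of $X \oplus \fg$, and the standing hypotheses ($\ol{\Sigma}$ spans $M_\bR$ and $(M_\bR^W)_{\ol{\Sigma}\text{-{\rm gen}}} \neq \emptyset$) are exactly those required by \autoref{cor:action_on_singularities}. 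This produces a functor $F \colon \tilde{\Gamma}(M^W,\cA) \to \op{Ho}(\dgCat)$ with $F([\delta]) = {\rm D}_{\op{sing}}(\cM(\delta + \ol{\nabla}))$, and, via the isomorphism $\Pi_1(\cK_{X \oplus \fg}) \simeq \tilde{\Gamma}(M^W,\cA)$ of \autoref{prop:groupoid}, this is already a representation of $\Pi_1(\cK_{X \oplus \fg})$. So the only remaining task is to identify the category $F([\delta]) = {\rm D}_{\op{sing}}(\cM(\delta + \ol{\nabla}))$, for $\delta \in M_\bR^W$ with $\partial(\delta + \ol{\nabla}) \cap M = \emptyset$, with ${\rm D}^b(X_0^{\rm{ss}}(\ell)/G)$.

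For that identification I would reproduce the chain of equivalences from the proof of \autoref{thm:hyperkaehler} verbatim. Using \autoref{cor:perturbation}, perturb $\ell$ to a nearby $\ell'$ with $(X \times \fg)^{\rm{ss}}(\ell') = (X \times \fg)^{\rm{s}}(\ell')$ and $X_0^{\rm{ss}}(\ell') = X_0^{\rm{ss}}(\ell)$; in particular $X^{\rm{ss}}(\ell')/G$ is then Deligne--Mumford. Now: (i) the argument of \autoref{cor:sing}, recalled in the paragraph preceding \autoref{cor:action_on_singularities}, shows that the restriction functor induces an equivalence ${\rm D}_{\op{sing}}(\cM(\delta + \ol{\nabla})) \simeq {\rm D}^b_{\op{sing}}((X \times \fg)^{\rm{ss}}(\ell')/(G \times \bG_m), W)$; (ii) \autoref{lem:restriction} combined with \autoref{lem:crit_locus} gives an equivalence ${\rm D}^b_{\op{sing}}((X \times \fg)^{\rm{ss}}(\ell')/(G \times \bG_m), W) \simeq {\rm D}^b_{\op{sing}}((X^{\rm{ss}}(\ell') \times \fg)/(G \times \bG_m), W)$; (iii) regarding $(X^{\rm{ss}}(\ell') \times \fg)/G \to X^{\rm{ss}}(\ell')/G$ as the total space of $\cO_X \otimes \fg^\dual$ with $W$ induced by the section $\mu$, \autoref{lem:LG_CY} gives an equivalence ${\rm D}^b_{\op{sing}}((X^{\rm{ss}}(\ell') \times \fg)/(G \times \bG_m), W) \simeq {\rm D}^b((\mu^{-1}(0))^{\rm{ss}}(\ell')/G) = {\rm D}^b(X_0^{\rm{ss}}(\ell)/G)$. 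Composing (i)--(iii) produces the desired equivalence ${\rm D}_{\op{sing}}(\cM(\delta + \ol{\nabla})) \simeq {\rm D}^b(X_0^{\rm{ss}}(\ell)/G)$, and transporting the representation $F$ through it yields the asserted action of $\Pi_1(\cK_{X \oplus \fg})$ on ${\rm D}^b(X_0^{\rm{ss}}(\ell)/G)$.

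Once \autoref{thm:magic_windows} and \autoref{cor:action_on_singularities} are in hand the argument is essentially formal; the one delicate point --- already settled inside the proof of \autoref{thm:hyperkaehler}, and the step I would treat most carefully --- is the choice of the perturbation $\ell'$. It must simultaneously achieve $(X \times \fg)^{\rm{ss}}(\ell') = (X \times \fg)^{\rm{s}}(\ell')$, which is what allows \autoref{cor:action_on_singularities} to apply and what makes \autoref{lem:crit_locus} applicable, and it must leave $X_0^{\rm{ss}}(\ell)$ unchanged, so that the category reached at the end of the chain is precisely the one named in the statement. This in turn relies on $X_0 = \mu^{-1}(0)$ being a closed $G$-invariant subscheme and on the hypothesis $X_0^{\rm{ss}}(\ell) = X_0^{\rm{s}}(\ell)$ ruling out codimension-zero walls for the $G$-action on $X_0$ near $\ell$, so that a sufficiently small perturbation of $\ell$ preserves $X_0^{\rm{ss}}$.
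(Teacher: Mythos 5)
Your proposal is correct and follows essentially the same route as the paper, whose proof of this corollary is literally to combine \autoref{cor:action_on_singularities} with the chain of equivalences (restriction, \autoref{lem:restriction} plus \autoref{lem:crit_locus}, then \autoref{lem:LG_CY}) from the proof of \autoref{thm:hyperkaehler}, after the same initial perturbation of $\ell$ via \autoref{cor:perturbation}. Your extra care about why the perturbation leaves $X_0^{\rm{ss}}(\ell)$ unchanged is a reasonable elaboration of a point the paper leaves implicit, not a different argument.
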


\begin{rem} \label{rmk:nabla=sigma}
A simple computation using \autoref{rmk:nabla-width} and the proof of \autoref{lem:polytopes} shows that the polytope $\ol{\nabla} \subset M_\bR$ associated to the quasi-symmetric representation $X \oplus \fg$ is actually equal to the zonotope $\ol{\Sigma}$ associated to the character of the representation $X$ itself (rather than $X \oplus \fg$). Therefore, the $M^W$-periodic hyperplane arrangement appearing in the definition of $\cK_{X \oplus \fg}$ is the locally periodic hyperplane arrangement associated to the condition $\partial(\delta + \ol{\Sigma}) \cap M = \emptyset$ by \autoref{lem:hyperplane}.
\end{rem}

\subsubsection{Example: Hilbert scheme of $n$ points in $\bC^2$} \label{sect:hilb}

Consider the Nakajima quiver variety associated to the quiver with one vertex and one loop at that vertex. We consider the dimension vector $\bv = (n)$ and $\bw = (1)$. In this case, the group is $G = {\bf GL}_n(k)$ and the linear representation $X \oplus \fg$ is
\[
\mathfrak{gl}_n \oplus T^*(\op{Hom}(k^n,k^n) \oplus \op{Hom}(k^n,k)).
\]

If we take a positive linearization for $G$, then the corresponding Nakajima quiver variety is the Hilbert scheme of $n$ points in $\bC^2$ \cite[Theorem 1.9]{Nakajima-book} (the description is slightly different, but see \cite[Proposition 2.8]{Nakajima-book} to see that they are equivalent).

Take $T$ to be the diagonal matrices and let $e_1, \dots, e_n$ be the standard basis for the character lattice, so that the weights of $X$ are $\{e_i, -e_i \mid 1 \le i \le n\}$ and $\{e_i - e_j \mid 1\le i,j\le n\}$ (the latter set appearing with multiplicity 2). The zonotope $\ol{\Sigma}$ of the character of $X$ is the Minkowski sum of the line segments $[-e_i,e_i]$ and $[2e_i-2e_j,2e_j-2e_i]$. Let $x_1, \dots, x_n$ be the dual basis to $e_1, \dots, e_n$. Also define $c_S = |S| + 2|S|(n-|S|)$ for any subset $S \subseteq \{1,\dots,n\}$. 

\begin{lem}
The defining inequalities for $\ol{\Sigma}$ are $|\sum_{i \in S} x_i| \le c_S$ where we range over all nonempty subsets $S$ of $\{1,\dots,n\}$. 
\end{lem}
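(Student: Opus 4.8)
The strategy is to show that $\ol{\Sigma}$ equals the polytope
\[
P := \Big\{\, \chi \in M_\bR \ \Big|\ \big|\textstyle\sum_{i\in S} \pair{x_i}{\chi}\big| \le c_S \text{ for every nonempty } S \subseteq \{1,\dots,n\} \,\Big\},
\]
and then that each such inequality is facet-defining. Here $x_1,\dots,x_n \in N$ is the basis dual to $e_1,\dots,e_n$, so $\pair{x_i}{\chi}$ is the $i$-th coordinate of $\chi$; note $c_S = |S| + 2|S|(n-|S|)$ depends only on $k := |S|$, and I write $c_k$ for it. From the description recalled just before the lemma, $\ol{\Sigma}$ is the centrally symmetric zonotope $\sum_i [-e_i,e_i] + \sum_{i<j}[-2(e_i-e_j), 2(e_i-e_j)]$, so for $u = \sum_i u_i x_i \in N_\bR$ its support function is $h(u) = \max_{\chi \in \ol{\Sigma}} \pair{u}{\chi} = \sum_i |u_i| + 2\sum_{i<j}|u_i - u_j|$. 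The inclusion $\ol{\Sigma}\subseteq P$ is then immediate: writing $x_S := \sum_{i\in S} x_i$, its coordinates are $0,1$-valued and exactly $|S|(n-|S|)$ index pairs $i<j$ have $u_i \neq u_j$, so $h(x_S) = |S| + 2|S|(n-|S|) = c_S$, and $\ol{\Sigma} = -\ol{\Sigma}$ gives $|\pair{x_S}{\chi}| \le c_S$ for all $\chi\in\ol{\Sigma}$.

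The content of the lemma is the reverse inclusion $P\subseteq\ol{\Sigma}$, equivalently $\pair{u}{\chi}\le h(u)$ for every $\chi\in P$ and every $u\in N_\bR$. Since $\ol{\Sigma}$ and $P$ are invariant under permuting coordinates and under $\chi\mapsto -\chi$ (hence so are $h$ and the set of valid $u$), I may assume $u_1\ge u_2\ge\cdots\ge u_n$; let $p$ be the number of nonnegative $u_i$. The key step is the explicit decomposition
\[
u \;=\; \sum_{k=1}^{p} a_k\, x_{\{1,\dots,k\}} \;-\; \sum_{k=p}^{n-1} b_k\, x_{\{k+1,\dots,n\}},
\]
with $a_k = u_k - u_{k+1}$ for $k<p$, $a_p = u_p$, $b_p = -u_{p+1}$, and $b_k = u_k - u_{k+1}$ for $p<k\le n-1$; all coefficients are $\ge 0$ because $u$ is nonincreasing and $u_p \ge 0 \ge u_{p+1}$.

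Substituting this decomposition into $\pair{u}{\chi}$ and bounding each summand by $|\pair{x_S}{\chi}| \le c_S$ (valid since $\chi\in P$) gives $\pair{u}{\chi} \le \sum_{k=1}^p a_k c_k + \sum_{k=p}^{n-1} b_k c_{n-k}$. A telescoping computation---using the identity $c_k - c_{k-1} = 2n - 4k + 3$---evaluates the right-hand side as $\sum_{i\le p}(2n-4i+3)u_i + \sum_{i>p}(2n-4i+1)u_i$, which equals $h(u)$: for $u$ nonincreasing, $2\sum_{i<j}|u_i-u_j| = 2\sum_{i<j}(u_i-u_j)$ has coefficient $2(n-2i+1)$ on $u_i$, while $\sum_i |u_i|$ contributes $+1$ for $i\le p$ and $-1$ for $i>p$. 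Hence $\pair{u}{\chi}\le h(u)$ for all $\chi\in P$, so $P\subseteq\ol{\Sigma}$ and $\ol{\Sigma} = P$.

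To upgrade ``$\ol{\Sigma}$ is cut out by these inequalities'' to ``these are the defining (facet) inequalities'', I would note that the face of $\ol{\Sigma}$ supported by $x_S$ is the subzonotope generated by $\{e_i : i\notin S\}$ together with $\{2(e_i - e_j) : i<j,\ \{i,j\}\subseteq S \text{ or } \{i,j\}\cap S=\emptyset\}$; these span the $(n-1)$-dimensional subspace $\op{span}(e_i : i\notin S) \oplus \{v : \op{supp}(v)\subseteq S,\ \textstyle\sum_i v_i = 0\}$, so each such inequality is genuinely facet-defining. I expect the main obstacle to be the bookkeeping in the telescoping step of the previous paragraph---in particular matching the boundary index $k=p$ on the two sides---together with the care needed for the degenerate cases $p\in\{0,n\}$ and $u_i = 0$ (where $p$ is not uniquely determined, but any admissible choice works since the argument only uses $u_p\ge 0\ge u_{p+1}$).
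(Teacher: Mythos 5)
Your proof is correct, but the main containment is argued by a genuinely different route than the paper's. The paper cites McMullen's description of zonotope facets: every facet of $\ol{\Sigma}$ is a translate of a Minkowski sum of $n-1$ linearly independent generators, so its normal must annihilate $n-1$ of the vectors $e_i$, $e_i-e_j$, which forces it to be $\pm\sum_{i\in S}x_i$; this immediately shows $\ol{\Sigma}$ is cut out by a subset of the stated inequalities, and the remaining work is to exhibit, for each $S$, the maximizing vertex $v_S$ together with $n-1$ independent edge directions showing the inequality is facet-defining. You instead prove the two containments $\ol{\Sigma}\subseteq P$ and $P\subseteq\ol{\Sigma}$ by computing the support function $h(u)=\sum_i|u_i|+2\sum_{i<j}|u_i-u_j|$ and then, for the hard direction, producing an explicit conical decomposition of an arbitrary (sorted) $u$ into the vectors $\pm x_{\{1,\dots,k\}}$, $\pm x_{\{k+1,\dots,n\}}$ whose weighted support values telescope exactly to $h(u)$; I have checked the decomposition, the identity $c_k-c_{k-1}=2n-4k+3$, and the resulting coefficient match, and they are all correct (including the boundary index $k=p$ and the cases $p\in\{0,n\}$). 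This buys you a self-contained, citation-free argument at the cost of the bookkeeping; the paper's route is shorter but leans on \cite{mcmullen}. Your final step (each inequality is facet-defining) is essentially the paper's second paragraph in different clothing: you identify the supporting face as a translate of an $(n-1)$-dimensional subzonotope rather than listing $n-1$ independent vectors, and your description of its span is correct. The only nitpick is that the face is a \emph{translate} of the subzonotope you name (shifted by the sum of the generators on which $x_S$ is positive), which does not affect the dimension count.
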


\begin{proof}
Using \cite{mcmullen}, every facet contains a translate of the Minkowski sum of $n-1$ linearly independent vectors from the set of $e_i$ and $e_i-e_j$ and so its defining equation vanishes on these vectors. If the vectors contain $e_{i_1}, \dots, e_{i_r}$ and no other $e_j$, then the equation must be $\sum_{i \in S} x_i$ where $S = \{1,\dots,n\} \setminus \{i_1,\dots,i_r\}$. So all facets are of this form.

Conversely, we claim that $\sum_{i \in S} x_i \le c_S$ vanishes on a facet. The function $\sum_{i \in S} x_i$ takes on the value $c_S$ on the sum $v_S = \sum_{i \in S} e_i + \sum_{i \in S} \sum_{j \notin S} (2e_i - 2e_j)$ and it is clear this is the maximal value that it takes. Set $S = \{i_1,\dots,i_r\}$ and consider the set $\{v_S + e_j \mid j \notin S\} \cup\{v_S + e_{i_j} - e_{i_{j+1}} \mid j=1,\dots,r-1\}$. This is a linearly independent set of $n-1$ vectors and hence its convex hull is contained in a facet of $\ol{\Sigma}$. So $\sum_{i \in S} x_i \le c_S$ is a defining inequality. By symmetry, we conclude that $\sum_{i \in S} x_i \ge -c_S$ is also a defining inequality.
\end{proof}

By \autoref{rmk:nabla=sigma}, the polytope $\ol{\nabla}$ associated to $X \oplus \fg$ is the zonotope $\ol{\Sigma}$ associated to $X$. So the periodic locally finite arrangement in \autoref{lem:hyperplane} is given by the hyperplanes $\sum_{i \in S} x_i = c$ where $c$ is an arbitrary integer and $S$ is a nonempty subset of $\{1, \dots, n\}$ (translating by $e_i$ changes $c_S$ by 1 each time). The subspace $M_\bR^W$ is the line spanned by $e_1 + \cdots + e_n$, so the intersection with this arrangement gives the set $\{\frac{a}{b} (e_1 + \cdots + e_n) \mid a,b \in \bZ,\ 1 \le b \le n\}$.

The group $M^W$ is also generated by $e_1+\cdots+e_n$, so the complexified K\"ahler moduli space is $\cK_{X \oplus \fg} = (\bC \setminus (\bZ \cup \frac{1}{2} \bZ \cup \cdots \cup \frac{1}{n} \bZ)  ) / \bZ$, which under the exponential map $q = \exp(2\pi i (\delta + i\ell))$ is identified with
\[
\cK_{X \oplus \fg} = \{ q \in \bC^\ast \mid q^k \neq 1, \forall  k = 1,\ldots,n \}.
\]

\end{document}